\newtheorem{remark}{Remark}
\newtheorem{theorem}[remark]{Theorem}
\newtheorem{proposition}[remark]{Proposition}
\newtheorem{corollary}[remark]{Corollary}
\newtheorem{conjecture}[remark]{Conjecture}
\newcommand{\adim}{\operatorname{\mathrm{adim}}}
\newcommand{\Ext}{\operatorname{Ext}} 
\title{On the adjacency dimension of graphs}
\author{A. Estrada-Moreno, Y. Ram\'{\i}rez-Cruz, J. A. Rodr\'{\i}guez-Vel\'{a}zquez
\\
{\small Departament d'Enginyeria Inform\`{a}tica i Matem\`{a}tiques
}\\
{\small Universitat Rovira i Virgili,  Av. Pa\"{\i}sos Catalans 26, 43007
Tarragona, Spain.}
\\{\small e-mail:\mbox{\tt $\{$alejandro.estrada;yunior.ramirez;juanalberto.rodriguez$\}$\@@urv.cat} }
}
\date{}
\begin{document}
\maketitle

\begin{abstract}
A generator of a metric space is a set $S$ of points in the space with the property that every point
of the space is uniquely determined by its distances from the elements of $S$. Given a simple graph $G=(V,E)$, we define the distance function $d_{G,2}:V\times V\rightarrow \mathbb{N}\cup \{0\}$, as
$
d_{G,2}(x,y)=\min\{d_G(x,y),2\},  
$
where $d_G(x,y)$ is the length of a shortest path between $x$ and $y$ and $\mathbb{N}$ is the set of positive integers. Then $(V,d_{G,2 })$ is a metric space. We say that a set $S\subseteq V$ is a $k$-\textit{adjacency generator} for $G$ if for every two vertices $x,y\in V$, there exist at least $k$ vertices $w_1,w_2,...,w_k\in S$ such that
$$d_{G,2}(x,w_i)\ne d_{G,2}(y,w_i),\; \mbox{\rm for every}\; i\in \{1,...,k\}.$$  A minimum cardinality $k$-adjacency generator is called a $k$-\textit{adjacency basis} of $G$ and its cardinality, the $k$-\textit{adjacency dimension} of $G$. 

In this article we study the problem of finding the $k$-adjacency dimension of a graph. We give some necessary and sufficient conditions
for the existence of a $k$-adjacency basis of an arbitrary graph $G$ and we obtain general results on the $k$-adjacency dimension,  including  general bounds
and closed formulae for some  families of graphs.
In particular,  we obtain closed formulae  for the $k$-adjacency dimension of join graphs $G+H$ in terms of the $k$-adjacency dimension of $G$ and $H$. These results concern the  $k$-metric dimension, as join graphs have diameter two. 
As we can expect, the obtained results will become important tools for the study of the $k$-metric dimension of   lexicographic product  graphs and  corona product graphs. Moreover,  several results obtained in this paper need not be restricted to the metric $d_{G,2}$, they can be expressed in a more general setting, for instance, by using the metric $d_{G,t}(x,y)=\min\{d_G(x,y),t\}$  for  $t\in \mathbb{N}$. 
\end{abstract}

\section{Introduction}

A generator of a metric space $(X,d)$ is a set $S\subset X$ of points in the space  with the property that every point of $X$  is uniquely determined by the distances from the elements of $S$. Given a simple and connected graph $G=(V,E)$, we consider the function $d_G:V\times V\rightarrow \mathbb{N}\cup \{0\}$, where $d_G(x,y)$ is the length of a shortest path between $u$ and $v$ and $\mathbb{N}$ is the set of positive integers. Then $(V,d_G)$ is a metric space since $d_G$ satisfies $(i)$ $d_G(x,x)=0$  for all $x\in V$,$(ii)$  $d_G(x,y)=d_G(y,x)$  for all $x,y \in V$ and $(iii)$ $d_G(x,y)\le d_G(x,z)+d_G(z,y)$  for all $x,y,z\in V$. A vertex $v\in V$ is said to \textit{distinguish} two vertices $x$ and $y$ if $d_G(v,x)\ne d_G(v,y)$.
A set $S\subset V$ is said to be a \emph{metric generator} for $G$ if any pair of vertices of $G$ is
distinguished by some element of $S$. A minimum cardinality metric generator is called a \emph{metric basis}, and
its cardinality the \emph{metric dimension} of $G$, denoted by $\dim(G)$.


The notion of metric dimension of a graph was introduced by Slater in \cite{Slater1975}, where the metric generators were called \emph{locating sets}. Harary and Melter independently introduced the same concept in  \cite{Harary1976}, where metric generators were called \emph{resolving sets}. Applications of this invariant to the navigation of robots in networks are discussed in \cite{Khuller1996} and applications to chemistry in \cite{Johnson1993,Johnson1998}.  Several variations of metric generators, including resolving dominating sets \cite{Brigham2003}, independent resolving sets \cite{Chartrand2003}, local metric sets \cite{Okamoto2010}, strong resolving sets \cite{Sebo2004}, adjacency resolving sets  \cite{JanOmo2012}, $k$-metric generators \cite{Estrada-Moreno2013,Estrada-Moreno2013corona}, etc., have since been introduced and studied. In this article, we focus on the last of these issues: we are interested in the study of adjacency  resolving sets and $k$-metric generators.

The concept of adjacency generator\footnote{Adjacency generators were called adjacency resolving sets in   \cite{JanOmo2012}} was introduced by Jannesari and Omoomi in \cite{JanOmo2012} as a tool to study the metric dimension of lexicographic product graphs. This concept has been studied further by Fernau and Rodr\'{i}guez-Vel\'{a}zquez in \cite{Rodriguez-Velazquez-Fernau2013,Fernau-Ja-Corona-2014} where they showed
that the (local) metric dimension of the corona product of a graph of order $n$ and some
non-trivial graph $H$ equals $n$ times the (local) adjacency  dimension of $H$. As a consequence of this strong relation they showed that the problem of computing the adjacency dimension is
NP-hard.   A set $S\subset V$ of vertices in a graph $G=(V,E)$ is said to be  an \emph{adjacency generator} for $G$  if for every two vertices $x,y\in V\setminus S$ there exists $s\in S$ such that $s$ is adjacent to exactly one of $x$ and $y$. A minimum cardinality adjacency generator is called an \emph{adjacency basis} of $G$, and its cardinality  the \emph{adjacency dimension} of $G$,  denoted by $\adim(G)$.

Notice that $S$ is an adjacency generator for $G$ if and only if $S$ is an adjacency generator for its complement $\overline{G}$. This is justified by the fact that given an adjacency generator $S$ for $G$, it holds that for every $x,y\in V\setminus S$ there exists $s\in S$ such that $s$ is adjacent to exactly one of $x$ and $y$, and this property holds in $\overline{G}$. Thus, $\adim(G)=\adim(\overline{G}).$ Besides, from the definition of adjacency and metric bases, we deduce that $S$ is an adjacency basis of a graph $G$ of diameter at most two if and only if $S$ is a metric basis of $G$. In these cases, $\adim(G)=\dim(G)$.


As pointed out in \cite{Rodriguez-Velazquez-Fernau2013,Fernau-Ja-Corona-2014}, 
any  adjacency generator of a graph $G=(V,E)$ is  also a metric generator in a suitably chosen metric space.
Given a positive integer $t$,  we define the distance function $d_{G,t}:V\times V\rightarrow \mathbb{N}\cup \{0\}$, where
\begin{equation*}\label{distinguishAdj}
d_{G,t}(x,y)=\min\{d_G(x,y),t\}.
\end{equation*}
Then any metric generator for $(V,d_{G,t})$ is a metric generator for $(V,d_{G,t+1})$ and, as a consequence, the metric dimension of $(V,d_{G,t+1})$  is less than or equal to the metric dimension of $(V,d_{G,t})$. In particular, the metric dimension of $(V,d_{G,1})$ is equal to $|V|-1$,  the metric dimension of $(V,d_{G,2})$ is equal to $\adim(G)$ and, if $G$ has diameter $D(G)$, then $d_{G,D(G)}=d_G$ and so  the metric dimension of  $(V,d_{G,D(G)})$  is equal to $\dim(G)$.
Notice that when using the metric $d_{G,t}$ 
the concept of metric generator needs not be restricted to the case of connected graphs\footnote{For any pair of vertices $x,y$ belonging to different connected components of $G$ we can assume that $d_G(x,y)=\infty>2$ and so $d_{G,t}(x,y)=t$.}.

The concept of $k$-metric generator  introduced by Estrada-Moreno, Yero and Rodr\'{i}guez-Vel\'{a}zquez  \cite{Estrada-Moreno2013corona,Estrada-Moreno2014}, is a natural extension of the concept of metric generator.
A set $S\subseteq V$ is said to be a \emph{$k$-metric generator} for $G$ if and only if any pair of vertices of $G$ is distinguished by at least $k$ elements of $S$, {\em i.e.}, for any pair of different vertices $u,v\in V$, there exist at least $k$ vertices $w_1,w_2,...,w_k\in S$ such that $$d_G(u,w_i)\ne d_G(v,w_i),\; \mbox{\rm for every}\; i\in \{1,...,k\}.$$ A $k$-metric generator of minimum cardinality in $G$ is called a \emph{$k$-metric basis}, and its cardinality the $k$-metric dimension of $G$,  denoted by $\dim_{k}(G)$.

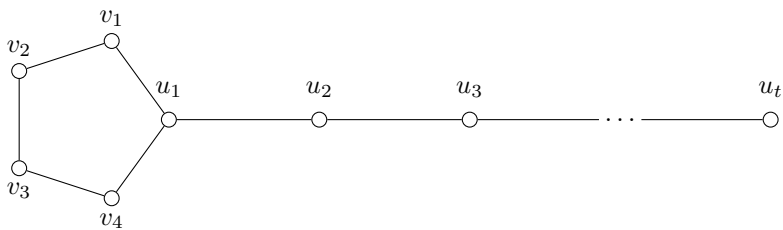
\begin{figure}[!ht]
\centering
\begin{tikzpicture}[transform shape, inner sep = .7mm]
\def\radius{1.1} 
\foreach \ind in {1,...,4}
{
\pgfmathparse{360/5*\ind};
\node [draw=black, shape=circle, fill=white] (v\ind) at (\pgfmathresult:\radius cm) {};
\ifthenelse{\ind=3\OR \ind=4}
{
\node [scale=1] at ([yshift=-.3 cm]v\ind) {$v_\ind$};
}
{
\node [scale=1] at ([yshift=.3 cm]v\ind) {$v_\ind$};
};
}
\foreach \ind in {1,...,3}
\pgfmathparse{int(\ind+1)}
\draw[black] (v\ind) -- (v\pgfmathresult);
\foreach \ind in {1,...,3}
{
\pgfmathparse{\radius + 2*(\ind-1)};
\node [draw=black, shape=circle, fill=white] (u\ind) at (\pgfmathresult cm, 0) {};
\node [scale=1] at ([yshift=.4 cm]u\ind) {$u_\ind$};
}
\pgfmathparse{\radius + 2*3};
\node (ldot) at (\pgfmathresult cm, 0) {$\ldots$};
\pgfmathparse{\radius + 2*4};
\node [draw=black, shape=circle, fill=white] (ut) at (\pgfmathresult cm, 0) {};
\node [scale=1] at ([yshift=.4 cm]ut) {$u_t$};
\foreach \ind in {1,4}
\draw[black] (u1) -- (v\ind);
\foreach \ind in {1,2}
\pgfmathparse{int(\ind+1)}
\draw[black] (u\ind) -- (u\pgfmathresult);
\draw[black] (u3) -- (ldot);
\draw[black] (ldot) -- (ut);
\end{tikzpicture}
\caption{\label{figDimk}For $k\in \{1,2,3,4\}$, $\dim_k(G)=k+1$.}
\end{figure}

As an  example we take a graph $G$ obtained from the cycle graph $C_5$ and the path $P_t$, by identifying one of the vertices of the cycle, say $u_1$, and one of the extremes of $P_t$, as we show in Figure \ref{figDimk}. Let $S_1=\{v_1,v_2\}$, $S_2=\{v_1,v_2,u_t\}$, $S_3=\{v_1,v_2,v_3,u_t\}$ and $S_4=\{v_1,v_2,v_3,v_4,u_t\}$. For $k\in \{1,2,3,4\}$ the set $S_k$ is $k$-metric basis of $G$.

Note that every $k$-metric generator $S$ satisfies that $|S|\geq k$ and, if $k>1$, then $S$ is also a $(k-1)$-metric generator. Moreover, $1$-metric generators are the standard metric generators (resolving sets or locating sets as defined in \cite{Harary1976} or \cite{Slater1975}, respectively). Some basic results on the $k$-metric dimension of a graph  have recently been obtained in \cite{Estrada-Moreno2013,Estrada-Moreno2013corona,Estrada-Moreno2014b,Estrada-Moreno2014,Yero2013c}. In particular, it was shown in \cite{Yero2013c} that the problem of computing the $k$-metric dimension of a graph  is NP-hard.



We say that a set $S\subseteq V(G)$ is a $k$-\textit{adjacency generator} for $G$ if for every two vertices $x,y\in V(G)$, there exist at least $k$ vertices $w_1,w_2,...,w_k\in S$ such that
$$d_{G,2}(x,w_i)\ne d_{G,2}(y,w_i),\; \mbox{\rm for every}\; i\in \{1,...,k\}.$$  A minimum $k$-adjacency generator is called a $k$-\textit{adjacency basis} of $G$ and its cardinality, the $k$-\textit{adjacency dimension} of $G$, is denoted by $\adim_k(G)$. For connected graphs, any $k$-adjacency basis is a $k$-metric   basis. Hence, if there exists a
$k$-adjacency basis of a connected graph $G$, then $$\dim_k(G)\le \adim_k(G).$$ Moreover, if $G$ has diameter at most two, then $\dim_k(G)= \adim_k(G).$

For the graph $G$ shown in Figure \ref{figExampleOfDifference} we have $\dim_1(G)=8<9=\adim_1(G)$, $\dim_2(G)=12<14=\adim_2(G)$ and $\dim_3(G)=20=\adim_3(G)$. Note that the only $3$-adjacency basis of $G$, and at the same time the only $3$-metric basis, is $V(G)-\{0,6,{12},{18}\}$.

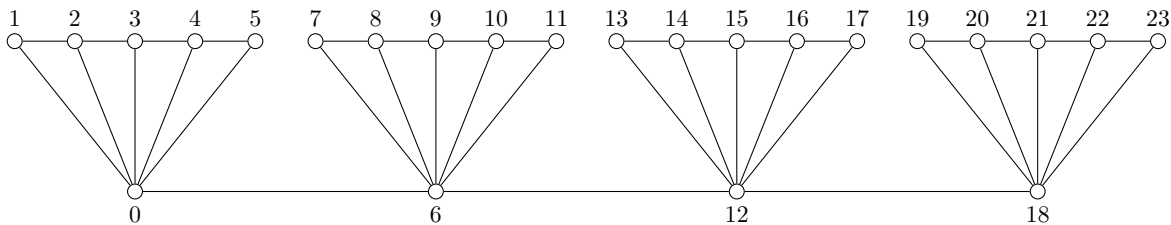
\begin{figure}[!ht]
\centering
\begin{tikzpicture}[transform shape, inner sep = .7mm]
\foreach \x in {1,...,4}
{
\pgfmathtruncatemacro{\iind}{6*(\x-1)};
\pgfmathsetmacro{\xc}{(5*(\x-1)*.8};
\node [draw=black, shape=circle, fill=white] (v\iind) at (\xc,0) {};
\node [scale=.9] at ([yshift=-.3 cm]v\iind) {${\iind}$};
\foreach \ind in {1,...,5}
{
\pgfmathsetmacro{\xc}{(5*(\x-1)+\ind-3)*.8};
\pgfmathparse{int(\ind + 6*(\x-1))};
\node [draw=black, shape=circle, fill=white] (v\pgfmathresult) at (\xc,2) {};
\node [scale=.9] at ([yshift=.3 cm]v\pgfmathresult) {${\pgfmathresult}$};
\draw[black] (v\iind) -- (v\pgfmathresult);
\ifthenelse{\ind>1}
{
\pgfmathtruncatemacro{\bind}{\ind + 6*(\x-1)-1};
\draw[black] (v\pgfmathresult) -- (v\bind);
}
{};
}

\ifthenelse{\x>1}
{
\pgfmathparse{6*(\x-2)};
\draw[black] (v\iind) -- (v\pgfmathresult);
}
{};
}

\end{tikzpicture}
\caption{\label{figExampleOfDifference}\label{figExampleOfDifference}The set  $\{2,4,6,8,{10},{14},{16},{20},{21}\}$ is an adjacency basis of $G$, while the set $\{{2l+1}:\; l\in \{0,...,11\}\}\cup\{6,{12}\}$ is a $2$-adjacency  basis and $V(G)-\{0,6,{12},{18}\}$ is a $3$-adjacency  basis.}
\end{figure}


In this article we study the problem of finding the $k$-adjacency dimension of a graph. The paper is organized as follows: in Section \ref{SectionDimensional} we give some necessary and sufficient conditions
for the existence of a $k$-adjacency basis of an arbitrary graph $G$, \textit{i.e.,} we
determine the range of $k$  where $\adim_k(G)$ makes sense.
Section \ref{secAdjDim} is devoted to the study of the $k$-adjacency dimension. We obtain general results on this invariants  including  tight bounds
and closed formulae for some particular families of graphs.
Finally, in Section \ref{SectionJoin}  we obtain closed formulae  for the $k$-adjacency dimension of join graphs $G+H$ in terms of the $k$-adjacency dimension of $G$ and $H$. These results concern the  $k$-metric dimension, as join graphs have diameter two. 

As we can expect, the obtained results will become important tools for the study of the $k$-metric dimension of   lexicographic product  graphs and  corona product graphs. Moreover, we would point out that several results obtained in this article, like those in Remark \ref{Remark-Kmetric-adjmetric} 
and subsequent, until  Theorem \ref{theoAdjDimkn}, need not be restricted to the metric $d_{G,2}$, they can be expressed in a more general setting, for instance, by using the metric $d_{G,t}$  for any positive integer $t$. 

We will use the notation $K_n$, $K_{r,s}$, $C_n$, $N_n$ and $P_n$ for complete graphs,  complete bipartite graphs, cycle graphs, empty graphs and path graphs, respectively. 
We use the notation $u\sim v$ if $u$ and $v$ are adjacent and $G\cong H$ if  $G$ and $H$ are isomorphic graphs. For a vertex $v$ of a graph $G$, $N_G(v)$ will denote the set of neighbours or \emph{open neighborhood} of $v$ in $G$, \textit{i.e.}, $N_G(v)=\{u\in V(G):\; u\sim v\}$. The \emph{closed neighborhood}, denoted by $N_{G}[x]$, equals $N_{G}(x)\cup \{x\}$. If there is no  ambiguity, we will simply write  $N(x)$ or $N[x]$.
We also define $\delta(v)=|N(v)|$ as the degree of vertex $v$, as well as, $\delta(G)=\min_{v\in V(G)}\{\delta(v)\}$ and $\Delta(G)=\max_{v\in V(G)}\{\delta(v)\}$.  The subgraph induced by a set $S$ of vertices will be denoted  by $\langle S\rangle$, the diameter of a  graph will be denoted by $D(G)$ and the girth by $\mathtt{g}(G)$.
For the remainder of the paper, definitions will be introduced whenever a concept is needed.

\section{$k$-adjacency dimensional graphs}  \label{SectionDimensional}

We say that a graph $G$ is  \emph{$k$-adjacency dimensional} if $k$ is the largest integer such that there exists a $k$-adjacency basis of $G$. Notice that if $G$ is a $k$-adjacency dimensional graph, then for each positive integer $r\le k$, there exists at least one $r$-adjacency basis of $G$.
Given a connected graph $G$ and  two different vertices $x,y\in V(G)$, we denote by $\mathcal{C}_G(x,y)$ the  set of vertices that distinguish  the pair $x,y$ with regard to the metric $d_{G,2}$, \textit{ i.e.}, $$\mathcal{C}_G(x,y)=\{z\in V(G):\; d_{G,2}(x,z)\ne d_{G,2}(y,z)\}.$$
Then a set  $S\subseteq V(G)$ is a $k$-adjacency generator for $G$ if $|\mathcal{C}_G(x,y)\cap S|\ge k$ for all $x,y\in V(G)$.  Notice that two vertices $x,y$ are twins if and only if
$\mathcal{C}_G(x,y)=\{x,y\}.$

Since for every $x,y\in V(G)$ we have that $|\mathcal{C}_G(x,y)|\ge 2$, it follows that the whole vertex set $V(G)$ is a $2$-adjacency generator for $G$ and, as a consequence, we deduce that every graph $G$ is $k$-adjacency dimensional for some $k\ge 2$. On the other hand, for any  graph $G$ of order $n\ge 3$, there exists at least one vertex $v\in V(G)$ such that $|N_G(v)|\ge 2$ or $|V(G)-N_G(v)|\ge 2$, so for any pair $x,y\in N_G(v)$ or $x,y\in V(G)-N_G(v)$, we deduce that $v\notin \mathcal{C}_G(x,y)$ and, as a result,  there is no $n$-adjacency dimensional graph of order $n\ge 3$.

We define the following parameter $$\mathcal{C}(G)=\displaystyle\min_{x,y\in V(G)}\{|\mathcal{C}_G(x,y)|\}.$$

\begin{theorem}\label{theokadjacency}
A graph $G$ is $k$-adjacency dimensional if and only if $k=\mathcal{C}(G)$. Moreover,  $\mathcal{C}(G)$ can be computed in $O(|V(G)|^3)$ time.
\end{theorem}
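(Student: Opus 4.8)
The plan is to prove the equivalence by showing that $\mathcal{C}(G)$ is exactly the largest integer $k$ for which a $k$-adjacency generator of $G$ exists; since a $k$-adjacency basis is just a minimum-cardinality $k$-adjacency generator, the existence of a $k$-adjacency basis is equivalent to the existence of a $k$-adjacency generator, and so this is precisely the assertion that $G$ is $k$-adjacency dimensional if and only if $k=\mathcal{C}(G)$. The main tool is the reformulation recorded just before the theorem: a set $S\subseteq V(G)$ is a $k$-adjacency generator if and only if $|\mathcal{C}_G(x,y)\cap S|\ge k$ for every pair of distinct vertices $x,y\in V(G)$. This recasts the whole question as a purely combinatorial statement about the sets $\mathcal{C}_G(x,y)$.

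For the upper bound I would take an arbitrary $k$-adjacency generator $S$ and fix a pair of distinct vertices $x,y$ attaining the minimum in the definition of $\mathcal{C}(G)$, so that $|\mathcal{C}_G(x,y)|=\mathcal{C}(G)$. From $|\mathcal{C}_G(x,y)\cap S|\ge k$ together with $\mathcal{C}_G(x,y)\cap S\subseteq \mathcal{C}_G(x,y)$ we obtain $k\le |\mathcal{C}_G(x,y)|=\mathcal{C}(G)$, so no $k$-adjacency generator can exist for $k>\mathcal{C}(G)$. For the matching lower bound I would simply exhibit a $\mathcal{C}(G)$-adjacency generator: the whole vertex set $V(G)$ works, because $|\mathcal{C}_G(x,y)\cap V(G)|=|\mathcal{C}_G(x,y)|\ge \mathcal{C}(G)$ for every pair $x,y$. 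Hence $\mathcal{C}(G)$-adjacency generators exist while $(\mathcal{C}(G)+1)$-adjacency generators do not, which pins the dimensional value to $\mathcal{C}(G)$.

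For the complexity claim, write $n=|V(G)|$ and note that $d_{G,2}(u,w)$ takes only the values $0$, $1$, $2$, according to whether $u=w$, $u\sim w$, or neither; hence the full $n\times n$ table of $d_{G,2}$-values can be read off the adjacency information in $O(n^2)$ time, with no shortest-path computation required. For each of the $O(n^2)$ unordered pairs $x,y$ I would then scan all $n$ candidate vertices $z$, count those with $d_{G,2}(x,z)\ne d_{G,2}(y,z)$ to obtain $|\mathcal{C}_G(x,y)|$ in $O(n)$ time, and finally take the minimum over all pairs. Summing gives the claimed $O(n^3)$ bound.

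The argument is essentially bookkeeping, so I do not expect a genuine obstacle; the only point requiring care is to confirm that the set-intersection reformulation is faithful to the original existential definition of a $k$-adjacency generator (that "at least $k$ distinguishing vertices lie in $S$" is the same as "$|\mathcal{C}_G(x,y)\cap S|\ge k$"), but this equivalence is already established in the text preceding the theorem. A secondary subtlety worth stating explicitly is that $|\mathcal{C}_G(x,y)|\ge 2$ for every pair, since each of $x$ and $y$ belongs to $\mathcal{C}_G(x,y)$; thus $\mathcal{C}(G)\ge 2$ and the dimensional value is always well defined, consistent with the earlier observation that every graph is $k$-adjacency dimensional for some $k\ge 2$.
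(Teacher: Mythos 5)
Your proof is correct and follows essentially the same route as the paper: both hinge on the reformulation $|S\cap\mathcal{C}_G(x,y)|\ge k$, use $V(G)$ as the universal witness that a $\mathcal{C}(G)$-adjacency generator exists, use a pair attaining the minimum to cap any admissible $k$ at $\mathcal{C}(G)$, and give the identical $O(|V(G)|^3)$ counting argument. Your necessity direction is in fact slightly cleaner than the paper's, which argues by contradiction by augmenting a $k$-adjacency basis to a $(k+1)$-adjacency generator when $k<\mathcal{C}(G)$, whereas you obtain the same conclusion directly from the existence of the generator $V(G)$.
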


\begin{proof} First we shall prove the equivalence.
(Necessity) If $G$ is a $k$-adjacency dimensional graph, then for any $k$-adjacency basis $B$ and any pair of vertices $x,y\in V(G)$, we have
$|B\cap {\cal C}_G(x,y)|\ge k$. Thus, $k\le \mathcal{C}(G)$. Now we suppose that $k<\mathcal{C}(G)$. In such a case, for every $x',y'\in V(G)$ such that $|B\cap {\cal C}_G(x',y')|=k$, there exists $z_{x'y'}\in {\cal C}_G(x',y')-B$ such that $d_{G,2}(z_{x'y'},x')\ne d_{G,2}(z_{x'y'},y')$. Hence, the set $$B\cup \left(\displaystyle\bigcup_{x',y'\in V(G):\; |B\cap {\cal C}_G(x',y')|=k}\{z_{x'y'}\}\right)$$ is a $(k+1)$-adjacency generator for $G$, which is a contradiction. Therefore, $k=\mathcal{C}(G).$

(Sufficiency) Let $a,b\in V(G)$ such that $\displaystyle\min_{x,y\in V(G)}|{\cal C}_G(x,y)|=|{\cal C}_G(a,b)|=k$. Since no set $S\subseteq V(G)$ satisfies $|S\cap {\cal C}_G(a,b)|>k$ and $V(G)$ is a $k$-adjacency generator for $G$,  we conclude that $G$ is a $k$-adjacency dimensional graph.

Now, we assume that the graph $G$ is represented by its adjacency matrix ${\bf A}$. We recall that ${\bf A}$ is a symmetric $(n\times n)$-matrix given by
$${\bf A}(i,j)=\left\{\begin{array}{ll}
                                1, & \mbox{if $u_i\sim u_j$}, \\
                                0, & \mbox{otherwise.}
                              \end{array}\right.$$

Now observe that for every $z\in V(G)-\{x,y\}$ we have that $z\in {\cal C}_G(x,y)$ if and only if ${\bf A}(x,z)\ne {\bf A}(y,z)$. Considering this, we can compute $|{\cal C}_G(x,y)|$ in linear time for each pair $x,y\in V(G)$. Therefore, the overall running time for determining $\mathcal{C}(G)$ is dominated by the cubic time of computing the value of $|{\cal C}_G(x,y)|$ for $\displaystyle\binom{|V(G)|}{2}$ pairs of vertices $x,y$ of $G$.
\end{proof}

As Theorem \ref{theokadjacency} shows, given a graph $G$ and a positive integer $k$, the problem of deciding if $G$ is $k$-adjacency dimensional is easy to solve. Even so, we would point out some useful particular cases.  

\begin{remark}\label{coro2Addimesional}
A graph $G$ is $2$-adjacency dimensional if and only if there are at least two vertices of $G$ belonging to the same twin equivalence class.
\end{remark}

Note that by the previous remark we deduce that graphs such as the complete graph $K_n$ and the complete bipartite graph $K_{r,s}$ are $2$-adjacency dimensional. 

If $u,v\in V(G)$ are adjacent vertices of degree two and they are not twin vertices, then $|{\cal C}_G(u,v)|=4$. Thus, For any integer $n\ge 5$, $C_n$ is $4$-adjacency dimensional and  we can state the following more general remark.

\begin{remark}Let $G$ be a twins-free\footnote{A graph is twins-free  if all its twin equivalence classes are singleton.} graph of minimum degree two. If $G$ has two adjacent vertices of degree two, then $G$ is $4$-adjacency dimensional. 
\end{remark}


For any hypercube $Q_r$, $r\ge 2$, we have $|\mathcal{C}_{Q_r}(u,v)|=2r$ if $u\sim v$,  $|\mathcal{C}_{Q_r}(u,v)|=2r-2$ if $d_{Q_r}(u,v)=2$  and  $|\mathcal{C}_{Q_r}(u,v)|=2r+2$ if $d_{Q_r}(u,v)\ge 3$. Hence,  $\mathcal{C}(Q_r)=2r-2.$

\begin{remark}
For any integer $r\ge 2$ the hypercube $Q_r$ is $(2r-2)$-adjacency dimensional.
\end{remark}

It is straightforward that for any graph $G$ of girth $\mathtt{g}(G)\ge 5$ and minimum degree $\delta(G)\ge 2$,   $\mathcal{C}(G)\ge 2\delta(G).$ Hence, the following remark is immediate. 

\begin{remark}
Let $G$ be a $k$-adjacency dimensional graph. If  $\mathtt{g}(G)\ge 5$  and $\delta(G)\ge 2$, then $k\ge 2\delta(G)$.
\end{remark}

If there is an end-vertex\footnote{An end-vertex  of a graph $G$ is a vertex of degree one and its support vertex is its neighbour.} $u$ in $G$ whose support vertex $v$  has degree two, then $|\mathcal{C}_G(u,v)|=|N_G[v]|=3$. 
Hence,  we deduce the following result.

\begin{remark}\label{4-adjDim3-adjDim}
Let $G$ be a twins-free graph. If there exists an end-vertex whose support vertex has degree two, then $G$ is $3$-adjacency dimensional. 
\end{remark}

The case of trees is summarized in the following remark. Before stating it, we need some additional terminology.  Let $T$ be a tree. A vertex of degree at least 3 is called a \textit{major vertex} of  $T$.
A leaf $u$ of $T$ is said to be a {\em terminal vertex of a major vertex} $v$ of $T$ if $d_T(u,v)<d_T(u,w)$ for every other major vertex $w$ of $T$.
The {\em terminal degree} of a major vertex $v$ is the number of terminal vertices of $v$.
A major vertex $v$ of $T$ is an \textit{exterior major vertex} of $T$ if it has positive terminal degree.

\begin{remark}
Let $T$ be a $k$-adjacency dimensional tree of order $n\ge 3$. Then $k\in \{2,3\}$ and  $k=2$ if and only if there are two leaves sharing a common support vertex.
\end{remark}

\begin{proof}
By Remark \ref{coro2Addimesional} we conclude that $k=2$ if and only if there are two leaves sharing a common support vertex.
Also, if $T$ is a path different from $P_3$, then by Remark \ref{4-adjDim3-adjDim} we have that $k=3$. 

If $T$ is not a path, then there exists at least one exterior major vertex $u$ of terminal degree greater than one. Then,  either $u$ is the support vertex of all its terminal vertices, in which case Remark \ref{coro2Addimesional}  leads to $k=2$, or $u$ has at least one terminal vertex whose support vertex has degree two,  in which case Remark \ref{4-adjDim3-adjDim} leads to $k=3$ if there are no leaves of $T$  sharing a common support vertex.
\end{proof}

Since   $|\mathcal{C}_G(x,y)|\le \delta(x)+\delta(y)+2$, for all $x,y\in V(G)$, the following remark immediately follows.

\begin{remark}
If $G$ is a $k$-adjacency dimensional graph, then $$k\le \min_{x,y\in V(G)}\{\delta(x)+\delta(y)\}+2.$$
\end{remark}

This bound is achieved, for instance, for any  graph $G$ constructed as follows. Take a cycle $C_n$ whose vertex set is $V(C_n)=\{u_1,u_2,...,u_n\}$ and an empty graph $N_n$ whose vertex set is $V(N_n)=\{v_1,v_2,...,v_n\}$ and then, for $i=1$ to $n$, connect by an edge $u_i$ to $v_i$. In this case,   $G$ is $4$-adjacency dimensional. Also, a trivial  example is the case of graphs having two isolated vertices, which are $2$-adjacency dimensional.

As  defined in \cite{Estrada-Moreno2013}, a connected graph $G$ is  \emph{$k$-metric dimensional} if $k$ is the largest integer such that there exists a $k$-metric basis.
Since any $k$-adjacency generator is a $k$-metric generator, the following result is straightforward.

\begin{remark}\label{Remark-Kmetric-adjmetric}
If a graph $G$ is $k$-adjacency dimensional  and $k'$-metric dimensional, then $k\le k'$. Moreover, if $D(G)\le 2$, then $k'=k$.
\end{remark}

\section{$k$-adjacency dimension. Basic results}\label{secAdjDim}

In this section we present some results that allow us to compute the $k$-adjacency dimension of several families of graphs. We also give some tight bounds on the $k$-adjacency dimension of a graph.

\begin{theorem}[Monotony]\label{theoMonotyForAdjKs}
Let $G$ be a $k$-adjacency dimensional graph and let $k_1,k_2$ be two integers. If $1\le k_1<k_2\le k$, then $\adim_{k_1}(G)<\adim_{k_2}(G)$.
\end{theorem}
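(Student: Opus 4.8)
The plan is to reduce the statement to the consecutive case and then exploit the characterization of $k$-adjacency generators in terms of the sets $\mathcal{C}_G(x,y)$. Specifically, it suffices to prove that
$$\adim_{j}(G)<\adim_{j+1}(G)\qquad\text{whenever } 1\le j \text{ and } j+1\le k,$$
since then one obtains the full statement by chaining the strict inequalities
$$\adim_{k_1}(G)<\adim_{k_1+1}(G)<\cdots<\adim_{k_2}(G),$$
each consecutive step being legitimate because every index involved lies in $\{1,\dots,k\}$, and because $G$ being $k$-adjacency dimensional guarantees the existence of an $r$-adjacency basis for each such $r$.

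For the consecutive step, I would start from a $(j+1)$-adjacency basis $B$, so that $|B|=\adim_{j+1}(G)$ and $|B\cap\mathcal{C}_G(x,y)|\ge j+1$ for every pair $x,y\in V(G)$. The key observation is that deleting a single vertex cannot destroy more than one distinguisher of any given pair: for any fixed $w\in B$ and any pair $x,y$, we have
$$\bigl|(B\setminus\{w\})\cap\mathcal{C}_G(x,y)\bigr|\ge \bigl|B\cap\mathcal{C}_G(x,y)\bigr|-1\ge (j+1)-1=j.$$
Hence $B\setminus\{w\}$ is a $j$-adjacency generator for $G$. Note that $B$ is nonempty, since $|B|\ge j+1\ge 2$, so such a $w$ exists.

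From this I would conclude $\adim_{j}(G)\le |B\setminus\{w\}|=|B|-1=\adim_{j+1}(G)-1<\adim_{j+1}(G)$, which is exactly the desired strict inequality. The only point requiring any care is the reduction and the bookkeeping of indices, ensuring that each $(j+1)$-adjacency basis invoked actually exists; this is precisely where the hypothesis $k_2\le k$ enters, guaranteeing $j+1\le k$ at every step. I do not expect a genuine obstacle here: the entire argument rests on the elementary fact that removing one element decreases $|B\cap\mathcal{C}_G(x,y)|$ by at most one, so no extremal or case analysis is needed, and the strictness is immediate once the single-vertex deletion is seen to preserve the $j$-adjacency generator property.
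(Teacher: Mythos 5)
Your proposal is correct and follows essentially the same route as the paper: the authors also take an $r$-adjacency basis, delete a single vertex, observe that $|B\cap\mathcal{C}_G(x,y)|$ drops by at most one so the remainder is an $(r-1)$-adjacency generator, and chain the resulting strict inequalities over consecutive indices. Your write-up merely makes the reduction to the consecutive case and the existence of each intermediate basis more explicit.
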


\begin{proof}
Let $B$ be a $k$-adjacency basis of $G$. Let $x\in B$. Since $|B\cap\mathcal{C}_G(y,z)|\ge k$, for all $y,z\in V(G)$,   we have that $B-\{x\}$ is a $(k-1)$-adjacency generator for $G$ and, as a consequence, $\adim_{k-1}(G)\le \left|B-\{x\}\right|<|B|=\adim_{k}(G)$. By analogy we deduce that $\adim_{k-2}(G)<\adim_{k-1}(G)$ and, repeating this process until  we get $\adim(G)<\adim_{2}(G)$, we obtain  the result.    
\end{proof}

\begin{corollary}\label{firstConsequenceAdjMonotony}
Let $G$ be a $k$-adjacency dimensional graph of order $n$.
\begin{enumerate}[{\rm (i)}]
\item For any $r\in\{2,..., k\}$, $\adim_r(G)\ge \adim_{r-1}(G)+1.$
\item For any $r\in\{1,..., k\}$, $\adim_r(G)\ge \adim(G)+(r-1).$
\item For any $r\in\{1,..., k-1\}$,  $\adim_r(G)<n$.
\end{enumerate}
\end{corollary}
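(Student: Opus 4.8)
The plan is to obtain all three statements as direct consequences of the Monotony theorem (Theorem \ref{theoMonotyForAdjKs}), exploiting throughout that each $\adim_r(G)$ is a nonnegative integer. None of the three parts requires a fresh construction; the work is entirely in packaging the strict monotonicity correctly.

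For (i), I would apply Theorem \ref{theoMonotyForAdjKs} with $k_1=r-1$ and $k_2=r$. This is legitimate because $r\in\{2,\dots,k\}$ forces $1\le r-1<r\le k$, so the hypotheses of Monotony are met. Monotony yields the strict inequality $\adim_{r-1}(G)<\adim_r(G)$, and since both quantities are integers, strictness immediately upgrades to the gap $\adim_r(G)\ge\adim_{r-1}(G)+1$.

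For (ii), I would telescope part (i). Starting from $\adim_r(G)\ge\adim_{r-1}(G)+1$ and iterating the inequality down to index $1$, I arrive at $\adim_r(G)\ge\adim_1(G)+(r-1)$; a clean way to phrase this is a short induction on $r$, the base case $r=1$ being trivial. Finally, since a $1$-adjacency generator is exactly an adjacency generator, $\adim_1(G)=\adim(G)$, which gives the stated bound. For (iii), the key observation is that the whole vertex set $V(G)$ is a $k$-adjacency generator: for every pair $x,y$ we have $|\mathcal{C}_G(x,y)\cap V(G)|=|\mathcal{C}_G(x,y)|\ge\mathcal{C}(G)=k$ (using Theorem \ref{theokadjacency}), so $\adim_k(G)\le n$. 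Then for any $r\in\{1,\dots,k-1\}$, Monotony applied with $k_1=r<k=k_2$ gives $\adim_r(G)<\adim_k(G)\le n$, as required.

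I do not expect a substantive obstacle here, since the heavy lifting is already done in Theorem \ref{theoMonotyForAdjKs}. The only two points that deserve explicit care are the integrality argument in (i), which is what converts the strict inequality of Monotony into a gap of at least one unit and thereby makes the telescoping in (ii) quantitative, and the observation in (iii) that $V(G)$ itself realizes the upper bound $\adim_k(G)\le n$. Both are immediate, so the proof is short.
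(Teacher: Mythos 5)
Your proof is correct and follows exactly the route the paper intends: the corollary is stated without proof as an immediate consequence of the Monotony theorem, and your derivation (integrality upgrade for (i), telescoping for (ii), and $\adim_k(G)\le n$ via $V(G)$ being a $k$-adjacency generator for (iii)) is precisely the standard argument being invoked.
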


For instance, for the Petersen graph we have $\adim_6(G)=\adim_5(G)+1=\adim_4(G)+2=\adim_3(G)+3=10$ and $\adim_2(G)=\adim_1(G)+1=4.$

In order to continue presenting our results, we need to define a new parameter:  $${\cal C}_k(G)=\bigcup_{|{\cal C}_G(x,y)|=k}{\cal C}_G(x,y).$$

For any $k$-adjacency basis $A$ of a $k$-adjacency dimensional graph $G$, it holds that every pair of vertices $x,y\in V(G)$ satisfies $|A\cap\mathcal{C}_{G}(x,y)|\ge k$. Thus, for every $x,y\in V(G)$ such that $|{\cal C}_G(x,y)|=k$ we have that ${\cal C}_G(x,y)\subseteq A$, and so ${\cal C}_k(G)\subseteq A$. The following result is a direct consequence of this.

\begin{remark}\label{remAdjTauk}
If $G$ is a $k$-adjacency dimensional graph and $A$ is a $k$-adjacency basis, then ${\cal C}_k(G)\subseteq A$  and, as a consequence, $$\adim_{k}(G)\ge |{\cal C}_k(G)|.$$
\end{remark}

\begin{theorem}\label{theoAdjDimkn}
Let $G$ be a $k$-adjacency dimensional graph of order $n\ge 2$. Then $\adim_k(G)=n$ if and only if ${\cal C}_k(G)=V(G)$.
\end{theorem}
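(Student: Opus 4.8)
The plan is to prove the biconditional by establishing each direction separately, using Remark \ref{remAdjTauk} as the key structural fact.

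For the easier direction, suppose ${\cal C}_k(G)=V(G)$. By Remark \ref{remAdjTauk}, every $k$-adjacency basis $A$ satisfies ${\cal C}_k(G)\subseteq A$, so $V(G)\subseteq A$, forcing $A=V(G)$ and hence $\adim_k(G)=|V(G)|=n$. This direction is essentially immediate once Remark \ref{remAdjTauk} is invoked.

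For the converse, I would argue by contraposition: assume ${\cal C}_k(G)\ne V(G)$ and show $\adim_k(G)<n$. The goal is to exhibit a $k$-adjacency generator of cardinality strictly less than $n$, which by minimality gives $\adim_k(G)\le n-1<n$. Since ${\cal C}_k(G)\ne V(G)$, there is a vertex $v\in V(G)\setminus {\cal C}_k(G)$; the natural candidate is the set $S=V(G)-\{v\}$. I would verify that $S$ is a $k$-adjacency generator by checking $|S\cap {\cal C}_G(x,y)|\ge k$ for every pair $x,y$. Two cases arise. If $|{\cal C}_G(x,y)|>k$, then $|S\cap {\cal C}_G(x,y)|\ge |{\cal C}_G(x,y)|-1\ge k$, since deleting the single vertex $v$ removes at most one element. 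If $|{\cal C}_G(x,y)|=k$, then by definition ${\cal C}_G(x,y)\subseteq {\cal C}_k(G)$, and because $v\notin {\cal C}_k(G)$ we have $v\notin {\cal C}_G(x,y)$; hence $S\cap {\cal C}_G(x,y)={\cal C}_G(x,y)$ and $|S\cap {\cal C}_G(x,y)|=k$. In both cases the $k$-generator condition holds, so $S$ is a $k$-adjacency generator of size $n-1$.

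The main (though modest) obstacle is the case analysis in the converse: one must be careful that the vertex $v$ chosen outside ${\cal C}_k(G)$ is genuinely absent from every minimum-size distinguishing set ${\cal C}_G(x,y)$, which is exactly the content of the definition ${\cal C}_k(G)=\bigcup_{|{\cal C}_G(x,y)|=k}{\cal C}_G(x,y)$. Once this is observed, the argument is clean. I would also note that $G$ being $k$-adjacency dimensional guarantees, via Theorem \ref{theokadjacency}, that $k=\mathcal{C}(G)=\min_{x,y}|{\cal C}_G(x,y)|$, so no pair has $|{\cal C}_G(x,y)|<k$ and the two cases above are exhaustive.
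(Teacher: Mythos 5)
Your proposal is correct and follows essentially the same route as the paper: the forward direction is Remark \ref{remAdjTauk} applied to ${\cal C}_k(G)=V(G)$, and the converse deletes a single vertex $v\notin{\cal C}_k(G)$ and checks that $V(G)-\{v\}$ remains a $k$-adjacency generator, your two cases being exactly the paper's observation that any pair whose distinguishing set contains $v$ must have more than $k$ distinguishing vertices. The extra remark that $k=\mathcal{C}(G)$ rules out pairs with fewer than $k$ distinguishing vertices is a harmless clarification.
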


\begin{proof}
Assume that ${\cal C}_k(G)=V(G)$.  Since every $k$-adjacency dimensional graph $G$ satisfies that $\adim_k(G)\le n$, by Remark \ref{remAdjTauk} we obtain that $\adim_k(G)=n$.

Suppose that  there exists at least one vertex $x$ such that $x\not \in {\cal C}_k(G)$.  In such a case, for any  $a,b\in V(G)$ such that  $x\in {\cal C}_G(a,b)$, we have that $|{\cal C}_G(a,b)| > k$. Hence,   $|{\cal C}_G(a,b)-\{x\}| \ge k$, for all $a,b\in V(G)$ and, as a consequence,  $V(G)-\{x\}$ is a  $k$-adjacency generator for $G$, which leads to  $\adim_{k}(G)<n$. Therefore, if $\adim_{k}(G)=n$, then ${\cal C}_k(G)=V(G)$.
\end{proof}

As we will show in Propositions \ref{value-adj-Paths} and \ref{value-adj-Cycles}, $\adim_3(P_n)=n$ for $n\in\{4,\ldots,8\}$ and $\adim_4(C_n)=n$ for $n\ge 5$. These are examples of graphs satisfying conditions of Theorem \ref{theoAdjDimkn}.

\begin{corollary}\label{remarkAdjDim2n}
Let $G$ be a graph of order $n\geq 2$. Then $\adim_2(G)=n$ if and only if every vertex of $G$  belongs to a non-singleton twin equivalence class.
\end{corollary}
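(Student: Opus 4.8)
The plan is to reduce the statement to Theorem \ref{theoAdjDimkn} specialized to $k=2$, after first pinning down the set $\mathcal{C}_2(G)$ in terms of twins. The starting observation is that, since $\{x,y\}\subseteq \mathcal{C}_G(x,y)$ for every pair of distinct vertices, we have $|\mathcal{C}_G(x,y)|=2$ precisely when $\mathcal{C}_G(x,y)=\{x,y\}$, that is (as noted just after the definition of $\mathcal{C}_G$), precisely when $x$ and $y$ are twins. Consequently $\mathcal{C}_2(G)=\bigcup_{x,y\ \text{twins}}\{x,y\}$ is exactly the set of vertices lying in some non-singleton twin equivalence class. Thus the condition ``$\mathcal{C}_2(G)=V(G)$'' and the condition ``every vertex of $G$ belongs to a non-singleton twin equivalence class'' are literally the same statement, and the whole problem becomes that of invoking Theorem \ref{theoAdjDimkn} with $k=2$.

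For sufficiency ($\Leftarrow$), I would assume every vertex lies in a non-singleton twin class. In particular at least two vertices are twins, so by Remark \ref{coro2Addimesional} the graph $G$ is $2$-adjacency dimensional, i.e.\ $\mathcal{C}(G)=2$; this is exactly the hypothesis under which Theorem \ref{theoAdjDimkn} applies with $k=2$. Since moreover $\mathcal{C}_2(G)=V(G)$ by the identification above, Theorem \ref{theoAdjDimkn} yields $\adim_2(G)=n$.

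For necessity ($\Rightarrow$), the one point requiring care is that Theorem \ref{theoAdjDimkn} presupposes $G$ to be $2$-adjacency dimensional, so I must first establish $\mathcal{C}(G)=2$ from the hypothesis $\adim_2(G)=n$. I would argue this sub-point by contraposition: if $\mathcal{C}(G)\ge 3$, then $|\mathcal{C}_G(y,z)|\ge 3$ for every pair $y,z$, so deleting any single vertex $x$ leaves $|(V(G)\setminus\{x\})\cap \mathcal{C}_G(y,z)|\ge 2$ for all $y,z$; hence $V(G)\setminus\{x\}$ is already a $2$-adjacency generator and $\adim_2(G)\le n-1<n$, contradicting the hypothesis. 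Therefore $\mathcal{C}(G)=2$, so $G$ is $2$-adjacency dimensional, and Theorem \ref{theoAdjDimkn} gives $\mathcal{C}_2(G)=V(G)$, which by the identification means every vertex lies in a non-singleton twin class.

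The main obstacle is precisely this applicability issue: the corollary is phrased for the fixed value $k=2$, whereas Theorem \ref{theoAdjDimkn} is stated for the intrinsic dimension $\mathcal{C}(G)$ of the graph, so one cannot plug $k=2$ in without first checking $\mathcal{C}(G)=2$. Once that gap is bridged by the single-vertex deletion argument above, the rest is a direct translation between the twin condition and the equality $\mathcal{C}_2(G)=V(G)$.
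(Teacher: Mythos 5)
Your proof is correct and follows the paper's intended route: the corollary is presented there as a direct consequence of Theorem \ref{theoAdjDimkn}, obtained exactly as you do by identifying $\mathcal{C}_2(G)$ with the set of vertices lying in non-singleton twin equivalence classes. Your extra care in establishing $\mathcal{C}(G)=2$ in the necessity direction correctly fills a step the paper leaves implicit (it also follows from Corollary \ref{firstConsequenceAdjMonotony}(iii)), and your single-vertex-deletion argument for it is sound.
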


Since $\mathcal{C}_G(x,y)=\mathcal{C}_{\overline{G}}(x,y)$ for all $x,y\in V(G)$,  we deduce the following result, which was previously observed  for $k=1$  by  Jannesari and Omoomi in \cite{JanOmo2012}.

\begin{remark}\label{propAdj-G_Gcomp}
For any nontrivial graph $G$ and $k\in \{1,2,\ldots,\mathcal{C}(G)\}$,  $$\adim_k(G)=\adim_k(\overline{G}).$$
\end{remark}



Now we consider the limit case of the trivial bound $\adim_{k}(G)\ge k$. The case $k=1$ was studied in   \cite{JanOmo2012} where the authors showed that   $\adim_{1}(G)=1$ if and only if $G\in\{P_2,P_3,\overline{P}_2,\overline{P}_3\}$.

\begin{proposition}\label{propValueClassic2}
If $G$ is a graph of order $n\ge 2$, then  $\adim_{k}(G)=k$ if and only if $k\in \{1,2\}$ and $G\in\{P_2,P_3,\overline{P}_2,\overline{P}_3\}$
\end{proposition}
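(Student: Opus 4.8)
The plan is to prove the equivalence in both directions, with the reverse direction being the routine verification and the forward direction requiring the real work. For the reverse direction, I would simply check the four named graphs directly: for each of $P_2,P_3,\overline{P}_2,\overline{P}_3$ I compute $\adim_1$ and $\adim_2$ and confirm they equal $1$ and $2$ respectively. Since $P_2=\overline{P_2}$ up to the isolated-vertex convention and $\adim_k(G)=\adim_k(\overline{G})$ by Remark \ref{propAdj-G_Gcomp}, I really only need to handle $P_2$ and $P_3$. For $P_2$, the two vertices are twins, so $\mathcal{C}(P_2)=2$, one vertex distinguishes them (trivially any single vertex suffices), giving $\adim_1=1$, and the whole set gives $\adim_2=2$. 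For $P_3$, the two leaves are twins while the center is distinguished from each, and one checks $\adim_1(P_3)=1$ and $\adim_2(P_3)=2$.

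For the forward direction, suppose $\adim_k(G)=k$. First I would dispatch the constraint on $k$: by Corollary \ref{firstConsequenceAdjMonotony}(ii), $\adim_k(G)\ge \adim(G)+(k-1)$, and since $\adim(G)\ge 1$ for any graph of order $n\ge 2$, the hypothesis $\adim_k(G)=k$ forces $\adim(G)=1$. This immediately restricts $k$: combining $\adim_k(G)=k$ with the known characterization $\adim_1(G)=1 \iff G\in\{P_2,P_3,\overline{P}_2,\overline{P}_3\}$ from \cite{JanOmo2012}, I learn that $G$ must be one of these four graphs whenever I can also show $k\le 2$. So the crux is to show that $\adim_k(G)=k$ is impossible for $k\ge 3$ (equivalently, that $G$ is at most $2$-adjacency dimensional), and then that for these four graphs the value $k$ can only be $1$ or $2$.

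To rule out $k\ge 3$, I would use the trivial lower bound together with strict monotony. By Theorem \ref{theoMonotyForAdjKs}, if $G$ is $k$-adjacency dimensional with $k\ge 3$, then $\adim(G)<\adim_2(G)<\adim_3(G)\le\adim_k(G)$, and combined with $\adim(G)=1$ this already gives $\adim_3(G)\ge 3$ and $\adim_k(G)\ge \adim(G)+(k-1)=k$. The point is that equality $\adim_k(G)=k$ together with $\adim(G)=1$ pins down the entire chain: each inequality in Corollary \ref{firstConsequenceAdjMonotony}(i) must be an equality with increment exactly one. Thus I would argue that once $\adim(G)=1$, the graph is forced to be one of the four small graphs by the $k=1$ result, and then I directly verify that each of $P_2,P_3,\overline{P}_2,\overline{P}_3$ is only $2$-adjacency dimensional (each has $\mathcal{C}(G)=2$, since each contains a twin pair giving $|\mathcal{C}_G(x,y)|=2$), so $k\le 2$ automatically.

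The main obstacle I anticipate is organizing the logic so that I am not circular: the $k=1$ characterization from \cite{JanOmo2012} gives me the graph as soon as I know $\adim(G)=1$, but I must independently confirm that for each of the four graphs the hypothesis $\adim_k(G)=k$ can be met only for $k\in\{1,2\}$ and never for larger $k$. This is handled cleanly by computing $\mathcal{C}(G)=2$ for each, since by Theorem \ref{theokadjacency} a graph with $\mathcal{C}(G)=2$ is exactly $2$-adjacency dimensional, so no $k$-adjacency basis exists for $k\ge 3$ and the statement $\adim_k(G)=k$ is only meaningful for $k\in\{1,2\}$. Everything else is the small finite check of the four base cases, which is routine.
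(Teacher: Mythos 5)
Your proposal is correct and follows essentially the same route as the paper: monotony (Corollary \ref{firstConsequenceAdjMonotony}) forces $\adim_1(G)=1$, the characterization from \cite{JanOmo2012} then pins $G$ to the four small graphs, and the observation that each of these is $2$-adjacency dimensional rules out $k\ge 3$. The only difference is that you spell out the verification of $\mathcal{C}(G)=2$ via twin pairs, which the paper leaves as a one-line assertion.
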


\begin{proof}
The case $k=1$ was studied in \cite{JanOmo2012}. On the other hand, by performing some simple calculations, it is straightforward to see that $\adim_{2}(G)=2$ for $G\in\{P_2,P_3,\overline{P}_2,\overline{P}_3\}$.

Now, suppose that $\adim_{k}(G)=k$ for some $k\ge 2$. By Corollary \ref{firstConsequenceAdjMonotony} we have $k=\adim_{k}(G)\ge \adim_{1}(G)+k-1$ and, as a consequence, $\adim_{1}(G)=1$. Hence, $G\in\{P_2,P_3,\overline{P}_2,\overline{P}_3\}$. Finally, since  the graphs in $\{P_2,P_3,\overline{P}_2,\overline{P}_3\}$ are $2$-adjacency dimensional, the proof is complete.
\end{proof}

According to the result above, it is interesting to study the graphs where $\adim_k(G)=k+1$. 
To begin with, we state the following remark.

\begin{remark}\label{remAd_1-3}
If $G$ is a graph of order $n\ge 7$, then $\adim_1(G)\ge 3$.
\end{remark}

\begin{proof}
Suppose for purposes of contradiction, that $\adim_1(G)\le 2$. By Proposition \ref{propValueClassic2} we deduce that $\adim_1(G)=2$. Let $B=\{u,v\}$ be an adjacency basis of $G$. Then for any  $w\in V(G)-B$ the distance  vector $(d_{G,2}(u,w),d_{G,2}(v,w))$ must belong to $\{(1,1),(1,2),(2,1),(2,2)\}$. Since $|V(G)-B|\ge 5$, by  Dirichlet's box principle at least two elements of $V(G)-B$ have the same distance vector, which is a contradiction.
Therefore,  $\adim_1(G)\ge 3$.
\end{proof}

By Corollary \ref{firstConsequenceAdjMonotony} (ii) and Remark \ref{remAd_1-3} we obtain the following result.

\begin{theorem}\label{Bound-le-k+2}
For any graph  $G$  of order  $n\ge 7$ and $k\in\{1,\ldots,\mathcal{C}(G)\}$,  $$\adim_k(G)\ge k+2.$$
\end{theorem}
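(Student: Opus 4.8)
The plan is to combine the two results cited just before the statement, namely Corollary \ref{firstConsequenceAdjMonotony}(ii) and Remark \ref{remAd_1-3}, in a direct chain of inequalities. The target inequality $\adim_k(G)\ge k+2$ should follow almost mechanically once both ingredients are in place, so the proof will be short.

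First I would fix a graph $G$ of order $n\ge 7$ and an integer $k\in\{1,\ldots,\mathcal{C}(G)\}$. Note that by Theorem \ref{theokadjacency} the quantity $\mathcal{C}(G)$ is exactly the value for which $G$ is $\mathcal{C}(G)$-adjacency dimensional, so the hypothesis $k\le\mathcal{C}(G)$ guarantees that a $k$-adjacency basis exists and that $\adim_k(G)$ is well-defined; this is precisely the range in which Corollary \ref{firstConsequenceAdjMonotony} applies. Then I would invoke Corollary \ref{firstConsequenceAdjMonotony}(ii), which for $r=k$ gives
$$\adim_k(G)\ge \adim(G)+(k-1)=\adim_1(G)+(k-1).$$
Next, since $n\ge 7$, Remark \ref{remAd_1-3} yields $\adim_1(G)\ge 3$. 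Substituting this lower bound into the previous inequality produces
$$\adim_k(G)\ge 3+(k-1)=k+2,$$
which is the desired conclusion.

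There is essentially no hard part here, since both supporting results are already established in the excerpt; the only point requiring a little care is the bookkeeping on the range of $k$. Specifically, one must check that $k=1$ is not a degenerate case: for $k=1$ the inequality reads $\adim_1(G)\ge 3$, which is exactly Remark \ref{remAd_1-3} and is consistent with the chain above (the term $k-1$ vanishes). Thus the argument covers the full range $\{1,\ldots,\mathcal{C}(G)\}$ uniformly, and no separate treatment of boundary values of $k$ is needed. The whole proof is therefore a two-line substitution, and I would present it as such.
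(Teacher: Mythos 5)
Your proof is correct and follows exactly the paper's route: the paper derives the theorem as an immediate consequence of Corollary \ref{firstConsequenceAdjMonotony}(ii) and Remark \ref{remAd_1-3}, precisely the chain $\adim_k(G)\ge \adim_1(G)+(k-1)\ge 3+(k-1)=k+2$ that you write out. Your added remarks on the admissible range of $k$ are accurate but not needed beyond what the paper already assumes.
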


From Remark \ref{remAd_1-3} and Theorem \ref{Bound-le-k+2}, we only need to consider graphs of order $n\in \{3,4,5,6\}$ to determine those satisfying $\adim_k(G)=k+1$. If $n=3$, then by Proposition \ref{propValueClassic2} we conclude that $\adim_1(G)=2$ or  $\adim_2(G)=3$ if and only if $G\in \{K_3,N_3\}$. For $k\in \{1,2\}$ and $n\in \{4,5,6\}$   the graphs satisfying  $\adim_k(G)=k+1$ 
can be determined by a simple calculation. Here we just show some of these graphs in Figure \ref{figG_iK_3}. Finally, the cases $\adim_3(G)=4$ and $\adim_5(G)=5$ are studied in the following two remarks.

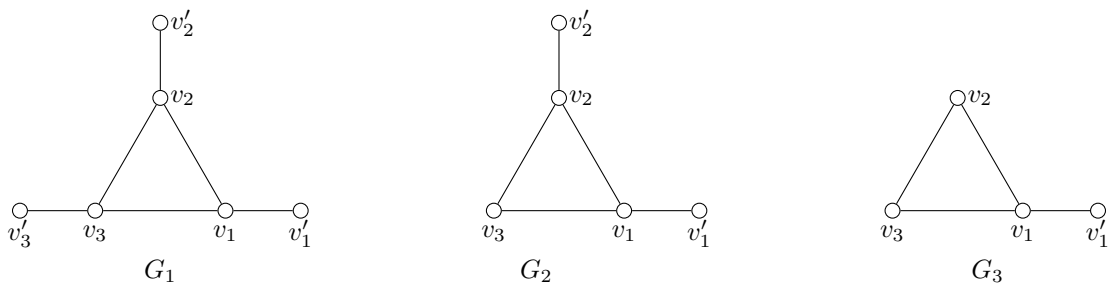
\begin{figure}[!ht]
\centering
\begin{tikzpicture}[transform shape, inner sep = .7mm]
\foreach \i in {1,2,3}
{
\foreach \ind in {1,2,3}
{
\pgfmathparse{120*\ind-150};
\node [draw=black, shape=circle, fill=white, xshift=(\i-1)*5.3 cm] (v\i\ind) at (\pgfmathresult:1 cm) {};
\ifthenelse{\ind=1 \OR \ind=3}
{
\node [scale=1] at ([yshift=-.3 cm]v\i\ind) {$v_\ind$};
}
{
\node [scale=1] at ([xshift=.3 cm]v\i\ind) {$v_\ind$};
};
\ifthenelse{\i=1 \OR \i=2 \OR \i=3 \AND \ind=1}
{
\node [draw=black, shape=circle, fill=white] (v\i4) at ([xshift=1 cm]v\i\ind) {};
\node [scale=1] at ([yshift=-.3 cm]v\i4) {$v_\ind'$};
\draw[black] (v\i\ind) -- (v\i4);
}{};
\ifthenelse{\i=1 \OR \i=2 \AND \ind=2}
{
\node [draw=black, shape=circle, fill=white] (v\i5) at ([yshift=1 cm]v\i\ind) {};
\node [scale=1] at ([xshift=.3 cm]v\i5) {$v_\ind'$};
\draw[black] (v\i\ind) -- (v\i5);
}{};
\ifthenelse{\i=1 \AND \ind=3}{
\node [draw=black, shape=circle, fill=white] (v\i6) at ([xshift=-1 cm]v\i\ind) {};
\node [scale=1] at ([yshift=-.3 cm]v\i6) {$v_\ind'$};
\draw[black] (v\i\ind) -- (v\i6);
}{};
}
\foreach \ind in {1,2}
\pgfmathparse{int(\ind+1)}
\draw[black] (v\i\ind) -- (v\i\pgfmathresult);

\draw[black] (v\i1) -- (v\i3);
\pgfmathparse{int((\i-1)*5.8)};
\node at (\pgfmathresult,-1.3) {$G_\i$};
}
\end{tikzpicture}
\caption{Any graph belonging to the families ${\cal G}_B(G_1)$, ${\cal G}_B(G_2)$ or $\{K_1\cup K_3,G_3\}$, where $B=\{v_1,v_2,v_3\}$, satisfies $\adim_2(G)=3$. The reader is referred to Section \ref{SectionFamilies} for the construction of the families ${\cal G}_B(G_i)$.}\label{figG_iK_3}
\end{figure}

The  set of nontrivial  distinctive vertices of a pair $x,y\in V(G)$, with regard to  the metric $d_{G,2}$,
will be denoted by $\mathcal{C}_G^*(x,y)=\mathcal{C}_G(x,y)-\{x,y\}.$ Notice that two vertices $x,y$ are twins if and only if
${\cal C}_G^*(x,y)=\emptyset$.

\begin{remark}\label{rem3_4}
A graph $G$ of order greater than or equal to four satisfies $\adim_3(G)=4$ if and only if $G\in \{P_4,C_5\}$.
\end{remark}

\begin{proof}
If $G\in \{P_4,C_5\}$, then it is straightforward to check that $\adim_3(G)=4$. Assume that  $B=\{v_1,\ldots,v_4\}$ is a $3$-adjacency basis of $G$. 
Since for any pair of vertices $v_i,v_j\in B$, there exists $v_l\in B\cap\mathcal{C}^*(v_i,v_j)$, by inspection we can check  that $\langle B\rangle\cong P_4$. We assume  that $v_i\sim v_{i+1}$ for $i\in\{1,2,3\}$. If $V(G)-B=\emptyset$, then $G\cong P_4$. Suppose that there exists $v\in V(G)-B$. If $v\sim v_2$, then the fact that $|B\cap\mathcal{C}^*(v,v_1)|\ge 2$ leads to $v\sim v_3$ and $v\sim v_4$. Since $|B\cap\mathcal{C}^*(v,v_4)|\ge 2$ and $v\sim v_3$, it follows that $v\sim v_1$. Thus, $v$ is connected to any vertices in $B$, which leads to $|B\cap\mathcal{C}^*(v,v_2)|=|\{v_4\}|=1$, contradicting the fact that $B$ is a $3$-adjacency basis of $G$. Analogously if $v\sim v_3$, then we arrive at the same contradiction. Thus, $v\sim v_1$ or $v\sim v_4$. If $v\sim v_1$ and $v\not\sim v_4$, then $|B\cap\mathcal{C}^*(v,v_2)|=|\{v_3\}|=1$, contradicting the fact that $B$ is a $3$-adjacency basis of $G$. Now, if $v\sim v_1$ and $v\sim v_4$, then $G\cong C_5$. If $|V(G)|\ge 6$, then there exist $u,v\in V(G)-B$. Since $|B\cap\mathcal{C}(u,v)|\ge 3$, then either $|B\cap N(u)|\ge 2$ or $|B\cap N(v)|\ge 2$. Suppose that $|B\cap N(u)|\ge 2$. As discussed earlier, $B\cap N(u)=\{v_1,v_4\}$. Since $|B\cap\mathcal{C}(u,v)|\ge 3$, it follows that either $v\sim v_2$ or $v\sim v_3$, which, as we saw earlier, contradicts the fact that $B$ is a $3$-adjacency basis of $G$.
\end{proof}

By Corollary \ref{firstConsequenceAdjMonotony} (i) and Remark \ref{rem3_4} we deduce that $\adim_4(G)\ge 6$ for any graph $G$ of order at least five such that $G\not\cong C_5$. Since $\adim_4(C_5)=5$, we obtain the following result.

\begin{remark}\label{rem4_5}
A graph $G$ of order $n\ge 5$ satisfies that $\adim_4(G)=5$ if and only if $G\cong C_5$.
\end{remark}

From Corollary \ref{firstConsequenceAdjMonotony} (i) and Remark \ref{rem4_5}, it follows that any $4$-adjacency dimensional graph $G$ of order six satisfies $\adim_4(G)=6$, as the case of $C_6$.

\subsection{Large families of graphs having a common $k$-adjacency generator} \label{SectionFamilies}

Given a $k$-adjacency basis $B$ of a graph $G=(V,E)$, we say that a graph $G'=(V,E')$ belongs to the family  ${\cal G}_B(G)$ if and only if $N_{G'}(x)=N_G(x)$, for every $x\in B$. Figure \ref{figExamplekPermutation} shows some graphs belonging to the  family  ${\cal  G}_{B}(G)$  having a common $2$-adjacency basis $B=\{v_2,v_3,v_4,v_5\}$.

\begin{figure}[!ht]
\centering
\begin{tikzpicture}[transform shape, inner sep = .7mm]
\foreach \y in {0,1}
{
\foreach \x in {7,12,17}
{
\pgfmathparse{\y*(-5.25)};
\node [draw=black, shape=circle, fill=white] (v\y\x1) at (\x,\pgfmathresult) {};
\node [scale=1] at ([yshift=-.3 cm]v\y\x1) {$v_1$};
\foreach \ind in {2,...,5}
{
\pgfmathparse{\y*(-5.25)+3.5-\ind};
\pgfmathsetmacro\posx{\x+2};
\node [draw=black, shape=circle, fill=black] (v\y\x\ind) at (\posx,\pgfmathresult) {};
\node [scale=1] at ([yshift=-.3 cm]v\y\x\ind) {$v_\ind$};
}
\foreach \ind in {2,...,5}
\draw[black] (v\y\x1) -- (v\y\x\ind);

\foreach \ind in {6,...,9}
{
\pgfmathparse{\y*(-5.25)+11.25-\ind*1.5};
\pgfmathsetmacro\posx{\x+4};
\node [draw=black, shape=circle, fill=white] (v\y\x\ind) at (\posx,\pgfmathresult) {};
\node [scale=1] at ([yshift=-.3 cm]v\y\x\ind) {$v_\ind$};
}
\foreach \ind in {2,3}
\draw[black] (v\y\x6) -- (v\y\x\ind);
\foreach \ind in {3,4}
\draw[black] (v\y\x7) -- (v\y\x\ind);
\foreach \ind in {4,5}
\draw[black] (v\y\x8) -- (v\y\x\ind);
\foreach \ind in {2,5}
\draw[black] (v\y\x9) -- (v\y\x\ind);
}
}
\node at ([yshift=-.9 cm]v075) {$G$};
\node at ([yshift=-.9 cm]v0125) {$G_1$};
\node at ([yshift=-.9 cm]v0175) {$G_2$};
\node at ([yshift=-.9 cm]v175) {$G_3$};
\node at ([yshift=-.9 cm]v1125) {$G_4$};
\node at ([yshift=-.9 cm]v1175) {$G_5$};
\draw (v0121.north) to[bend left] (v0126.north);
\draw (v0171.north) to[bend left] (v0176.north);
\draw (v0171.east) to[bend right] (v0178.west);
\draw (v176.east) to[bend left] (v177.east);
\draw (v1126.east) to[bend left] (v1127.east);
\draw (v1127.east) to[bend left] (v1128.east);
\draw (v1176.east) to[bend left] (v1177.east);
\draw (v1177.east) to[bend left] (v1178.east);
\draw (v1178.east) to[bend left] (v1179.east);
\end{tikzpicture}
\caption{\label{figExamplekPermutation}$B=\{v_2,v_3,v_4,v_5\}$ is a $2$-adjacency basis of $G$ and
$\{G,G_1,G_2,G_4,G_5\}\subset {\cal  G}_{B}(G)$.}
\end{figure}
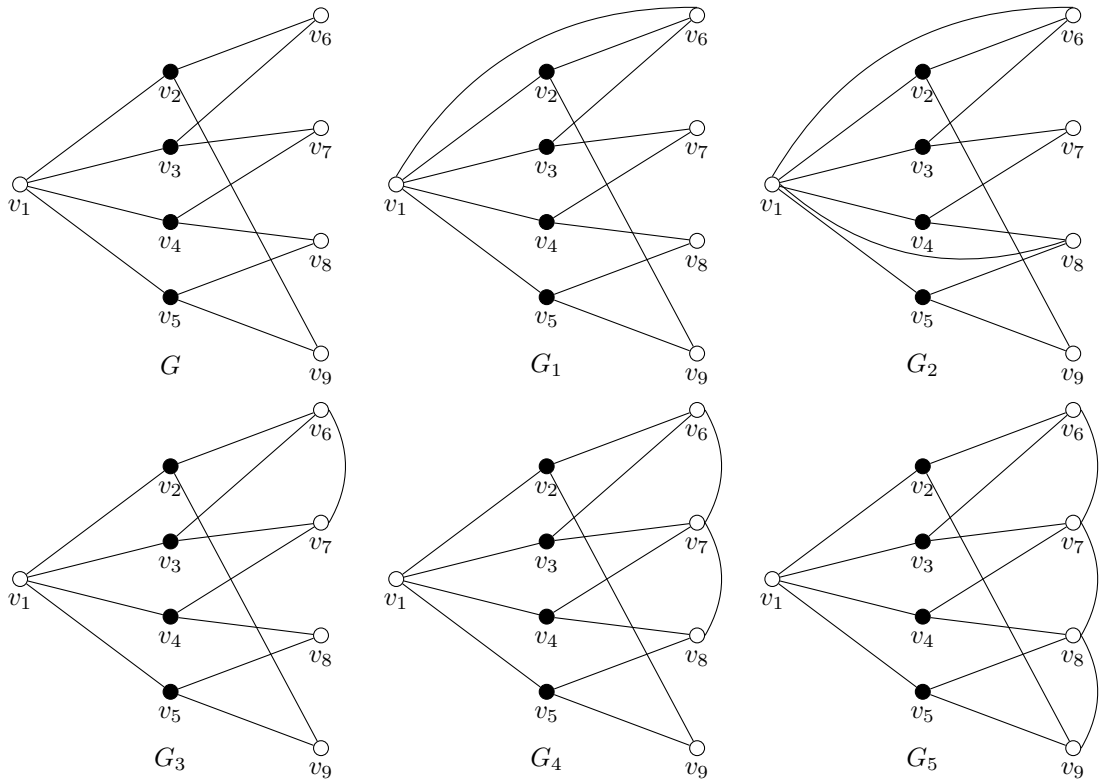

Notice that if $B \ne V(G)$, then the edge set of  any graph $G'\in {\cal{G}}_B(G) $  can be partitioned into two sets $E_1$, $E_2$, where $E_1$ consists of all edges of $G$ having at least one vertex in $B$ and $E_2$ is a subset of edges of a complete graph whose vertex set is $V(G)-B$. Hence, ${\cal{G}}_B(G)$ contains
$2^{\frac{|V(G)-B|(|V(G)-B|-1)}{2}}$ different graphs.

With the above notation in mind we can state our next result.

\begin{theorem}\label{Permutation-kadimension}
Any $k$-adjacency basis $B$ of a graph $G$ is a $k$-adjacency generator for any graph $G'\in {\cal G}_B(G)$, and as a consequence, $$\adim_k(G')\le\adim_k(G).$$
\end{theorem}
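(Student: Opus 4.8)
The plan is to prove the stronger structural claim that $B$ itself is a $k$-adjacency generator for every $G'\in\mathcal{G}_B(G)$; the inequality $\adim_k(G')\le\adim_k(G)$ is then immediate, since $B$ is a $k$-adjacency basis of $G$ and hence $|B|=\adim_k(G)$, so exhibiting a $k$-adjacency generator of $G'$ of this size bounds $\adim_k(G')$ from above by $|B|$.

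The heart of the argument is the observation that distinguishing a pair via a vertex $w\in B$ is a \emph{local} property: for any $x\in V(G)$ the value $d_{G,2}(x,w)$ lies in $\{0,1,2\}$ and is determined solely by $N_G(w)$, equalling $0$ if $x=w$, $1$ if $x\in N_G(w)$, and $2$ otherwise. First I would record this trichotomy in both $G$ and $G'$. Since $G'\in\mathcal{G}_B(G)$ means precisely that $N_{G'}(w)=N_G(w)$ for every $w\in B$, the three cases coincide in the two graphs, so $d_{G',2}(x,w)=d_{G,2}(x,w)$ for all $x\in V(G)$ and all $w\in B$. Consequently a vertex of $B$ distinguishes a pair $x,y$ in $G'$ if and only if it distinguishes that pair in $G$; in the notation of the paper, $\mathcal{C}_{G'}(x,y)\cap B=\mathcal{C}_G(x,y)\cap B$ for every pair $x,y$. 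Because $B$ is a $k$-adjacency generator for $G$ we have $|\mathcal{C}_G(x,y)\cap B|\ge k$ for all $x,y\in V(G)$, and this equality transfers the lower bound to $G'$, giving $|\mathcal{C}_{G'}(x,y)\cap B|\ge k$ for all pairs. This is exactly the condition that $B$ be a $k$-adjacency generator for $G'$, which completes the proof.

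I do not expect a genuine obstacle here: once the locality of the adjacency distance to a vertex of $B$ is made explicit, the equality of the distinguishing sets restricted to $B$ is automatic, and no case analysis beyond the trichotomy above is needed. The only point requiring care is to check that the identity $\mathcal{C}_{G'}(x,y)\cap B=\mathcal{C}_G(x,y)\cap B$ holds for \emph{all} pairs $x,y\in V(G)$, including those meeting $B$; this is already subsumed by the trichotomy, since that description of $d_{\cdot,2}(x,w)$ is valid for every $x\in V(G)$ regardless of whether $x$ belongs to $B$.
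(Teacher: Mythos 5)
Your proposal is correct and follows essentially the same route as the paper: both arguments rest on the observation that $d_{G',2}(x,w)=d_{G,2}(x,w)$ for every $w\in B$ because $N_{G'}(w)=N_G(w)$, and then transfer the distinguishing property of $B$ from $G$ to $G'$. Your explicit trichotomy for $d_{G,2}(x,w)$ just spells out the locality that the paper's proof uses implicitly.
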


\begin{proof}
Assume that $B$ is a $k$-adjacency basis of a graph $G=(V,E)$. Let $G'=(V,E')$ such that $N_{G'}(x)=N_G(x)$, for every $x\in B$. We will show that $B$ is a $k$-adjacency generator for any graph $G'$. To this end, we take two different vertices $u,v\in V$. Since $B$ is a $k$-adjacency basis of $G$, there exists $B_{uv}\subseteq B$ such that $|B_{uv}|\ge k$ and for every $x\in B_{uv}$ we have that $d_{G,2}(x,u)\ne d_{G,2}(x,v)$. Now, since for every $x\in B_{uv}$ we have that $N_{G'}(x)=N_G(x)$, we obtain that $d_{G',2}(u,x)=d_{G,2}(u,x)\ne d_{G,2}(v,x) = d_{G',2}(v,x)$. Hence, $B$ is a $k$-adjacency generator for $G'$ and, in consequence, $|B|=\adim_k(G)\ge \adim_k(G')$.
\end{proof}

By Proposition \ref{propValueClassic2} we have that if $G$ is a graph of order $n\ge 2$, then $\adim_{k}(G)=k$ if and only if $k\in \{1,2\}$ and $G\in\{P_2,P_3,\overline{P}_2,\overline{P}_3\}$. 
Thus, for any graph $H$ of order greater than three,  $\adim_{k}(H)\ge k+1$. Therefore, the next corollary is a direct consequence of Theorem \ref{Permutation-kadimension}.

\begin{corollary}\label{Corollary-Famili-dim=k+1}
Let $B$  be a $k$-adjacency basis  of  a graph $G$ of order $n\ge 4$ and let $G'\in {\cal  G}_B(G)$. If
$\adim_k(G)=k+1$, then $\adim_k(G')=k+1.$
\end{corollary}

Our next result immediately follows from Theorems \ref{Bound-le-k+2} and \ref{Permutation-kadimension}.

\begin{theorem}
Let $B$  be a $k$-adjacency basis  of  a graph $G$ of order $n\ge 7$ and let $G'\in {\cal  G}_B(G)$. If
$\adim_k(G)=k+2$, then $\adim_k(G')=k+2.$
\end{theorem}

An example of application of the result above is shown in Figure \ref{figExamplekPermutation}, where $\adim_2(G')=4$ for all $G'\in {\cal  G}_{B}(G)$. In this case   ${\cal{G}}_B(G)$ contains
$2^{10}=1024$ different graphs.

\section{The $k$-adjacency dimension of join graphs}\label{SectionJoin}

The \emph{join} $G+H$ of two vertex-disjoint graphs $G=(V_{1},E_{1})$ and $H=(V_{2},E_{2})$ is the graph with vertex set $V(G+H)=V_{1}\cup V_{2}$ and edge set $$E(G+H)=E_{1}\cup E_{2}\cup \{uv\,:\,u\in V_{1},v\in V_{2}\}$$

Note that $D(G+H)\le 2$ and so for any pair of graphs $G$ and $H$,  $$\dim_k(G+H)=\adim_k(G+H).$$

\subsection{The particular case of $K_1 +H$}

The following remark is a particular case of Corollary \ref{remarkAdjDim2n}.

\begin{remark}\label{remarkJoinAdjK_1H}
Let $H$ be a graph of order $n$. Then $\adim_2(K_1+H)=n+1$ if and only if $\Delta(H)=n-1$ and   every  vertex $v\in V(H)$ of degree $\delta (v)<n-1$  belongs to a non-singleton twin equivalence class.
\end{remark}

For any graph  $H$,  if $x,y\in V(H)$, then $ \mathcal{C}_{K_1+H}(x,y)=\mathcal{C}_H(x,y)$. Also, if  $x\not \in V(H)$  then
$ \mathcal{C}_{K_1+H}(x,y)=\{x\}\cup (V(H)-N_H(y))$.
Hence,
$$\mathcal{C}(K_1+H)=\displaystyle\min\{\mathcal{C}(H),n-\Delta(H)+1\}.$$

\begin{proposition}\label{propLowerBoundJoinK_1-H}
Let $H$ be a graph of order $n\ge 2$ and $k\in\{1,\ldots,\mathcal{C}(K_1+H)\}$. Then $$\adim_k(K_1+H)\ge\adim_k(H).$$
\end{proposition}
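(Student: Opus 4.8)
The plan is to start from an optimal $k$-adjacency generator of $K_1+H$ and restrict it to $V(H)$, showing that the restriction is already a $k$-adjacency generator for $H$. Let $v$ denote the vertex of $K_1$, let $B$ be a $k$-adjacency basis of $K_1+H$, so that $|B|=\adim_k(K_1+H)$, and set $B'=B\cap V(H)=B\setminus\{v\}$. The goal is to prove that $B'$ is a $k$-adjacency generator for $H$; once this is done, $\adim_k(H)\le |B'|\le |B|=\adim_k(K_1+H)$ follows immediately.

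The key observation, already recorded just before the statement, is that the apex vertex $v$ is adjacent to every vertex of $H$ and hence cannot distinguish any pair inside $V(H)$: for all $x,y\in V(H)$ one has $\mathcal{C}_{K_1+H}(x,y)=\mathcal{C}_H(x,y)$, and this common set is contained in $V(H)$. First I would fix an arbitrary pair $x,y\in V(H)$ and use this identity to compute
$$
B'\cap \mathcal{C}_H(x,y) = (B\cap V(H))\cap \mathcal{C}_H(x,y)=B\cap \mathcal{C}_H(x,y)=B\cap \mathcal{C}_{K_1+H}(x,y),
$$
where the middle equality holds because $\mathcal{C}_H(x,y)\subseteq V(H)$, so intersecting with $V(H)$ changes nothing. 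Since $B$ is a $k$-adjacency generator for $K_1+H$, the rightmost set has cardinality at least $k$, whence $|B'\cap \mathcal{C}_H(x,y)|\ge k$ as well. As $\mathcal{C}_H(x,y)$ is by definition the set of vertices distinguishing $x$ and $y$ with respect to $d_{H,2}$, this is exactly the condition that $B'$ distinguishes the pair $x,y$ at least $k$ times in $H$.

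Since $x,y$ were arbitrary, $B'$ is a $k$-adjacency generator for $H$, which yields the desired inequality. Note that $H$ need not be connected, but this causes no difficulty, since the metric $d_{H,2}$ is defined for arbitrary graphs as explained in the introduction, and the hypothesis $k\le \mathcal{C}(K_1+H)\le \mathcal{C}(H)$ guarantees that $\adim_k(H)$ is defined. There is essentially no hard step here: the whole argument rests on the fact that the apex vertex contributes nothing to distinguishing pairs within $V(H)$, so the only point requiring care is to verify, through the cited description of $\mathcal{C}_{K_1+H}$, that deleting $v$ from $B$ preserves the $k$-fold distinguishing property for every pair of vertices of $H$.
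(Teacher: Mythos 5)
Your proof is correct and follows exactly the paper's argument: restrict a $k$-adjacency basis of $K_1+H$ to $V(H)$ and use the identity $\mathcal{C}_{K_1+H}(x,y)=\mathcal{C}_H(x,y)$ for $x,y\in V(H)$ to conclude that the restriction is a $k$-adjacency generator for $H$. No differences worth noting.
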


\begin{proof}
Let $A$ be a $k$-adjacency basis of $K_1+H$, $A_H=A\cap V(H)$ and let $x,y\in V(H)$ be two different vertices. Since  $\mathcal{C}_{K_1+H}(x,y)=\mathcal{C}_{H}(x,y)$, it follows that $|A_H\cap\mathcal{C}_{H}(x,y)|=|A\cap\mathcal{C}_{K_1+H}(x,y)|\ge k$, and as a consequence, $A_H$ is a $k$-adjacency generator for $H$. Therefore, $\adim_k(K_1+H)=|A|\ge |A_H|\ge\adim_k(H)$.
\end{proof}

\begin{theorem}\label{propEqualJoinK_1-H}
For any     nontrivial graph $H$,  
the following assertions are equivalent: 
\begin{enumerate}[{\rm (i)}]
\item There exists a $k$-adjacency basis $A$ of $H$ such that $|A-N_H(y)|\ge k$, for all  $y\in V(H)$.
\item $\adim_k(K_1+H)=\adim_k(H).$
\end{enumerate}
\end{theorem}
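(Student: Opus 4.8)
The plan is to prove the two implications separately, in both cases exploiting the explicit descriptions of the sets $\mathcal{C}_{K_1+H}$ recorded just before the statement. Writing $v$ for the vertex of $K_1$, these descriptions read $\mathcal{C}_{K_1+H}(x,y)=\mathcal{C}_H(x,y)$ whenever $x,y\in V(H)$, while $\mathcal{C}_{K_1+H}(v,y)=\{v\}\cup(V(H)-N_H(y))$ for every $y\in V(H)$. The lower bound $\adim_k(K_1+H)\ge\adim_k(H)$ supplied by Proposition \ref{propLowerBoundJoinK_1-H} will then carry half of the work in establishing the equality in (ii).

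For the implication (i) $\Rightarrow$ (ii), I would take the $k$-adjacency basis $A$ of $H$ furnished by (i) and argue that $A$, viewed as a subset of $V(K_1+H)$ that does not contain $v$, is already a $k$-adjacency generator for $K_1+H$. Two kinds of pairs must be checked. For a pair $x,y\in V(H)$, the identity $\mathcal{C}_{K_1+H}(x,y)=\mathcal{C}_H(x,y)$ together with the fact that $A$ resolves $H$ gives $|A\cap\mathcal{C}_{K_1+H}(x,y)|=|A\cap\mathcal{C}_H(x,y)|\ge k$. For a pair $v,y$ with $y\in V(H)$, since $A\subseteq V(H)$ we obtain $A\cap\mathcal{C}_{K_1+H}(v,y)=A\cap(V(H)-N_H(y))=A-N_H(y)$, whose cardinality is at least $k$ precisely by the hypothesis of (i). Hence $A$ is a $k$-adjacency generator for $K_1+H$, so $\adim_k(K_1+H)\le|A|=\adim_k(H)$; combined with Proposition \ref{propLowerBoundJoinK_1-H} this forces the desired equality.

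For the converse (ii) $\Rightarrow$ (i), I would start from an arbitrary $k$-adjacency basis $B$ of $K_1+H$, so that $|B|=\adim_k(K_1+H)=\adim_k(H)$. The first step is to show that $B_H := B\cap V(H)$ is a $k$-adjacency generator for $H$, which is again immediate from $\mathcal{C}_{K_1+H}(x,y)=\mathcal{C}_H(x,y)$ for $x,y\in V(H)$. Consequently $|B_H|\ge\adim_k(H)=|B|$, and since $B_H\subseteq B$ this forces $B_H=B$, that is $v\notin B$, and $B$ is in fact a $k$-adjacency basis of $H$. Finally, applying the $k$-adjacency generator property of $B$ to each pair $v,y$ and using $v\notin B$ together with $B\subseteq V(H)$, we get $k\le|B\cap\mathcal{C}_{K_1+H}(v,y)|=|B\cap(V(H)-N_H(y))|=|B-N_H(y)|$ for every $y\in V(H)$. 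Thus $A:=B$ witnesses (i).

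The argument is largely bookkeeping once the two descriptions of $\mathcal{C}_{K_1+H}$ are in hand; the single step demanding care is the cardinality squeeze in the converse, where one must notice that the equality $\adim_k(K_1+H)=\adim_k(H)$ leaves no room for the apex vertex $v$ in any basis of $K_1+H$. This is the crux: it is what allows one to identify a basis of the join with a basis of $H$ and then read off the condition $|A-N_H(y)|\ge k$ from the pairs involving $v$. I would also note in passing that the hypothesis of (i) automatically guarantees $k\le\mathcal{C}(K_1+H)$ (since the constructed set is a $k$-adjacency generator), so that $\adim_k(K_1+H)$ is well defined throughout.
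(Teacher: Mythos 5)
Your proposal is correct and follows essentially the same route as the paper: prove (i)$\Rightarrow$(ii) by checking that the basis $A$ of $H$ already resolves all pairs of $K_1+H$ (using $\mathcal{C}_{K_1+H}(v,y)=\{v\}\cup(V(H)-N_H(y))$ and Proposition \ref{propLowerBoundJoinK_1-H} for the reverse inequality), and prove (ii)$\Rightarrow$(i) by showing a basis $B$ of $K_1+H$ must satisfy $B=B\cap V(H)$ and then reading off $|B-N_H(y)|\ge k$ from the pairs involving $v$. Your explicit remark that the cardinality squeeze forces $v\notin B$ is a point the paper leaves implicit, but the argument is the same.
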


\begin{proof} 
Let $A$ be a $k$-adjacency basis  of $H$ such that $|A-N_H(y)|\ge k$, for all $y\in V(H)$.  By Proposition \ref{propLowerBoundJoinK_1-H} we have that $\adim_k(K_1+H)\ge\adim_k(H)$. It remains to prove that $\adim_k(K_1+H)\le\adim_k(H)$.  We will prove that $A$ is a $k$-adjacency generator for $K_1+H$. We differentiate two cases for two vertices $x,y\in V(K_1+H)$. If $x,y\in V(H)$, then the fact that $A$ is a $k$-adjacency basis of $H$ leads to $k\le|A\cap\mathcal{C}_{H}(x,y)|=|A\cap\mathcal{C}_{K_1+H}(x,y)|$. On the other hand, if $x$ is the vertex of $K_1$ and $y\in V(H)$, then the fact that $\mathcal{C}_{K_1+H}(x,y)=\{x\}\cup(V(H)-N_H(y))$ and $|A-N_H(y)|\ge k$ leads to $|A\cap\mathcal{C}_{K_1+H}(x,y)|\ge k$. Therefore, $A$ is a $k$-adjacency generator for $K_1+H$, and as a consequence, $\adim_k(H)=|A|\ge\adim_k(K_1+H)$. 

On the other hand, let $B$ be a $k$-adjacency basis of $K_1+H$ such that $|B|=\adim_k(H)$ and let  $B_H=B\cap V(H)$. Since for any $h_1,h_2\in V(H)$ the vertex of $K_1$ does not belong to  $\mathcal{C}_{K_1+H}(h_1,h_2)$, we conclude that $B_H$ is a $k$-adjacency generator for $H$. Thus, $|B_H|= \adim_k(H)$ and, as a consequence,  $B_H$ is a $k$-adjacency basis of  $H$. If there exists  $h\in V(H)$ such that $|B_H-N_H(h)|< k$, then  
$|B\cap \mathcal{C}_{K_1+H}(v,h)|=|B_H-N_H(h)|<k$, which is a contradiction. Therefore, the result follows.
\end{proof}

  Our next result on graphs of diameter grater than or equal to six, is a direct consequence of  Theorem \ref{propEqualJoinK_1-H}.

\begin{corollary}\label{Th1HconditionNcD6}
For any  graph $H$ of diameter $D(H)\ge 6$ and $k\in\{1,\ldots,\mathcal{C}(K_1+H)\}$, $$\adim_k(K_1+H)=\adim_k(H).$$
\end{corollary}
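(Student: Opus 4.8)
The plan is to apply Theorem \ref{propEqualJoinK_1-H}, so it suffices to exhibit, for a graph $H$ with $D(H)\ge 6$, a $k$-adjacency basis $A$ of $H$ satisfying the separation condition $|A-N_H(y)|\ge k$ for every $y\in V(H)$. The key observation I would exploit is that when the diameter is large, no vertex can be adjacent to too large a portion of any distinguishing structure; more precisely, a vertex of $H$ of eccentricity at least $6$ forces the vertex set to spread out into many distance classes, so that any fixed vertex $y$ fails to be adjacent to most of the graph. I would first let $A$ be an \emph{arbitrary} $k$-adjacency basis of $H$ (which exists since $k\le \mathcal{C}(K_1+H)\le \mathcal{C}(H)$, so $H$ is at least $k$-adjacency dimensional) and then verify the condition directly.

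The main step is to bound $|A\cap N_H(y)|$ from above for each $y\in V(H)$. Here I would use that $D(H)\ge 6$ implies the existence of two vertices $u,w$ with $d_H(u,w)\ge 6$; for any single vertex $y$, its neighbours all lie within distance $1$ of $y$, hence within a ball of radius $1$, and the triangle inequality then prevents $N_H(y)$ from meeting vertices that are far apart. The cleaner route, which I expect the authors take, is to argue about $\mathcal{C}_{K_1+H}(x,y)=\{x\}\cup(V(H)-N_H(y))$ directly: since $d_{H,2}(x,y)\ne d_{H,2}(x,z)$ can be certified by vertices at distance $2$ or more from $y$, and a diameter-$6$ graph has, relative to any $y$, an abundance of vertices at distance $\ge 2$, the set $V(H)-N_H[y]$ is large. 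One then shows that a basis $A$ can be taken (or any basis necessarily has) at least $k$ of its elements outside $N_H(y)$.

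The cleanest argument I would actually write is the following. Take any $k$-adjacency basis $A$ of $H$ and fix $y\in V(H)$. Because $D(H)\ge 6$, pick a vertex $w$ with $d_H(y,w)\ge 3$; then $N_H(y)\cap N_H(w)=\emptyset$ and moreover $w$ together with its neighbourhood is ``far'' from $y$. Considering the pair $\{y,w\}$, every vertex distinguishing them with regard to $d_{H,2}$ that lies in $N_H(y)$ must be non-adjacent to $w$, and vice versa; since $A$ contains at least $k$ distinguishers for \emph{every} pair, and the distinguishers split according to which of $y,w$ they are near, a counting argument localized at the far vertex $w$ yields at least $k$ elements of $A$ that are non-adjacent to $y$. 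This gives $|A-N_H(y)|\ge k$, which is exactly condition (i) of Theorem \ref{propEqualJoinK_1-H}.

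The hard part will be making the ``far vertex'' argument uniform over all $y\in V(H)$ simultaneously for one fixed basis $A$, rather than choosing a different auxiliary vertex for each $y$ and risking that $A$ places too few distinguishers near it. I expect the resolution is that the diameter-$6$ hypothesis is genuinely stronger than needed pointwise and gives slack: for each $y$ one can find a distance-$3$ witness $w_y$, and the inequality $|A\cap\mathcal{C}_H(y,w_y)|\ge k$ combined with $N_H(y)\cap \mathcal{C}_H(y,w_y)\subseteq N_H(y)-N_H[w_y]$ decouples the two neighbourhoods enough to conclude. Once this uniform bound is in hand, Theorem \ref{propEqualJoinK_1-H} delivers $\adim_k(K_1+H)=\adim_k(H)$ immediately, and the case range $k\in\{1,\ldots,\mathcal{C}(K_1+H)\}$ is automatically the correct one since that is precisely the range where both sides are defined.
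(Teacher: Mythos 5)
Your overall strategy---reduce to Theorem \ref{propEqualJoinK_1-H} and show that every $k$-adjacency basis $A$ of $H$ satisfies $|A-N_H(y)|\ge k$ for all $y\in V(H)$---is the same as the paper's, but the core step of your argument does not work. For a fixed $y$ and a witness $w$ with $d_H(y,w)\ge 3$ one has $\mathcal{C}_H(y,w)=N_H[y]\cup N_H[w]$, and the basis condition only gives $|A\cap(N_H[y]\cup N_H[w])|\ge k$; nothing prevents all $k$ of these distinguishers from lying in $N_H(y)$, in which case the pair $(y,w)$ contributes no element of $A-N_H(y)$ at all. You flag this as ``the hard part,'' but the resolution you propose---the inclusion $N_H(y)\cap\mathcal{C}_H(y,w_y)\subseteq N_H(y)-N_H[w_y]$---is vacuously true and neither bounds $|A\cap N_H(y)|$ nor forces any distinguisher outside $N_H(y)$. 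Already for $k=1$ your argument would have to show $A\not\subseteq N_H(y)$, and the single pair $(y,w_y)$ cannot do that: its at-least-one distinguisher in $A$ may perfectly well be a neighbour of $y$.

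The paper closes this gap by arguing by contradiction and by using \emph{all} pairs involving a suitably chosen far vertex, not just one pair anchored at $y$ itself. If $|S\cap(V(H)-N_H(x))|<k$ for some basis $S$ and vertex $x$, then every pair of vertices must have at least one distinguisher in $F(x)=S\cap N_H[x]$. If every vertex outside $F(x)\cup\{x\}$ had a neighbour in $F(x)$, the diameter would already be at most $4$; otherwise one takes $y'$ with $N_H(y')\cap F(x)=\emptyset$ and pairs it against every other vertex $w$: the distinguisher of $(y',w)$ lying in $F(x)$ is adjacent to neither $y'$ nor\ldots rather, must be adjacent to exactly one of them, hence to $w$, so $N_H(w)\cap F(x)\ne\emptyset$. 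Thus every vertex except possibly $y'$ is within distance $2$ of $x$, giving $D(H)\le 5$ and contradicting $D(H)\ge 6$. This global counting over all pairs $(y',w)$ is the missing idea in your proposal; as written, your proof has a genuine gap.
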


\begin{proof}  

Let $S$ be a $k$-adjacency basis of  $H$.  We will show that $|S-N_H(x)|\ge k $, for all   $x\in V(H)$.  Suppose, for the purpose of contradiction, that there exists $x\in V(H)$ such that $|S\cap(V(H)-N_H(x))|<k$. Let $F(x)=S\cap N_H[x]$. Notice that  $|S|\ge k$ and hence $F(x)\ne\emptyset$.

From the assumptions above, if $V(H)=F(x)\cup \{x\}$, then $D(H)\le 2$, which is a contradiction. If for every $y\in V(H)-\left(F(x)\cup\{x\}\right)$ there exists $z\in F(x)$ such that $d_{H}(y,z)=1$, then $d_H(v,v')\le 4$ for all $v,v'\in V(H)-\left(F(x)\cup\{x\}\right)$. Hence $D(H)\le 4$, which is a contradiction. So, we assume that there exists a vertex $y'\in V(H)-\left(F(x)\cup\{x\}\right)$ such that $d_{H}(y',z)>1$, for every $z\in F(x)$, \textit{i.e}, $N_{H}(y')\cap F(x)=\emptyset$. If $V(H)=F(x)\cup\{x,y'\}$, then by the connectivity of $H$ we have $y'\sim x$  and, as consequence, $D(H)=2$, which is also a contradiction. Hence, $V(H)-(F(x)\cup\{x,y'\})\ne\emptyset$.
Now, for any $w\in V(H)-(F(x)\cup\{x,y'\})$ we have that $|\mathcal{C}_{H}(y',w)\cap S|\ge k$ and,
since $|S\cap\left(V(H)-N_{H}(x)\right)|<k$  and $N_{H}(y')\cap F(x)=\emptyset$, we deduce that $N_{H}(w)\cap F(x)\ne \emptyset$. From this fact and the connectivity of $H$, we obtain that $d_H(y',w)\le 5$.  Hence $D(H)\le 5$, which is also a contradiction. Therefore, if $D(H)\ge 6$, then for every $x\in V(H)$ we have that $|S\cap\left(V(H)-N_{H}(x)\right)|\ge k$.
Therefore, the result follows by Theorem \ref{propEqualJoinK_1-H}. 
\end{proof}

\begin{corollary}\label{girth>4}
Let $H$ be a  graph  of girth $\mathtt{g}(H)\ge 5$ and minimum degree $\delta(H)\ge 3$. Then for any 
$k\in\{1,\ldots,\mathcal{C}(K_1+H)\}$, $$\adim_k(K_1+H)=\adim_k(H).$$
\end{corollary}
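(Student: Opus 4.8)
The plan is to derive the statement from Theorem \ref{propEqualJoinK_1-H}: it suffices to exhibit a $k$-adjacency basis $A$ of $H$ satisfying $|A-N_H(y)|\ge k$ for every $y\in V(H)$. First I would observe that, since $k\le \mathcal{C}(K_1+H)=\min\{\mathcal{C}(H),n-\Delta(H)+1\}\le \mathcal{C}(H)$, the graph $H$ admits a $k$-adjacency basis at all; in fact I will argue that \emph{every} $k$-adjacency basis $A$ of $H$ satisfies the required inequality, so no special construction is needed.

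Fix $y\in V(H)$. The heart of the argument is to produce a single pair of vertices whose distinguishers entirely avoid $N_H(y)$. Since $\delta(H)\ge 3$, the vertex $y$ has a neighbour $u$, and $u$ in turn has at least two distinct neighbours $p,p'$ different from $y$ (again because $\delta(u)\ge 3$). I claim that $\mathcal{C}_H(p,p')\cap N_H(y)=\emptyset$, that is, $d_{H,2}(z,p)=d_{H,2}(z,p')$ for every $z\in N_H(y)$. For $z=u$ this is clear, as $u\sim p$ and $u\sim p'$ give $d_{H,2}(u,p)=d_{H,2}(u,p')=1$. For $z\in N_H(y)\setminus\{u\}$ I would show $d_{H,2}(z,p)=d_{H,2}(z,p')=2$: here the hypothesis $\mathtt{g}(H)\ge 5$ is essential, since $z\sim p$ together with the edges $z\sim y$, $y\sim u$, $u\sim p$ would close a $4$-cycle on the distinct vertices $z,y,u,p$, while $z=p$ (combined with $p\sim u$ and $y\sim u$) would force the triangle $y,u,p$; both configurations are excluded by the girth condition, and the same reasoning applies to $p'$. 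Hence no such $z$ distinguishes $p$ from $p'$.

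Granting the claim, every element of $A$ that lies in $\mathcal{C}_H(p,p')$ lies in $V(H)-N_H(y)$, whence $|A-N_H(y)|\ge |A\cap\mathcal{C}_H(p,p')|\ge k$, the last inequality holding because $A$ is a $k$-adjacency generator. As $y\in V(H)$ was arbitrary, assertion (i) of Theorem \ref{propEqualJoinK_1-H} holds, and the desired equality $\adim_k(K_1+H)=\adim_k(H)$ follows. I expect the only delicate point to be the verification that $\mathcal{C}_H(p,p')$ meets $N_H(y)$ in the empty set, i.e.\ that no neighbour of $y$ other than $u$ can be adjacent to, or equal to, $p$ or $p'$ without creating a cycle of length $3$ or $4$; everything else is routine bookkeeping. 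It is worth noting that $\delta(H)\ge 3$ is used only to guarantee the existence of the two private neighbours $p,p'$ of $u$, and $\mathtt{g}(H)\ge 5$ only to rule out the short cycles above.
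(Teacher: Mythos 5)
Your proof is correct and follows essentially the same route as the paper's: the paper likewise picks a neighbour $y$ of the given vertex $x$ and two further neighbours $u,v\in N_H(y)-\{x\}$ (guaranteed by $\delta(H)\ge 3$), uses $\mathtt{g}(H)\ge 5$ to conclude $\mathcal{C}_H(u,v)\cap N_H[x]=\emptyset$, and then invokes $|\mathcal{C}_H(u,v)\cap A|\ge k$ together with Theorem \ref{propEqualJoinK_1-H}. Your write-up merely spells out the girth verification (excluding the triangle and the $4$-cycle) in more detail than the paper does.
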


\begin{proof}
Let $A$ be a $k$-adjacency basis of $H$ and let $x\in V(H)$ and $y\in N_H(x)$. Since $\mathtt{g}(H)\ge 5$, for any $u,v\in N_H(y)-\{x\}$ we have that ${\cal C}_H(u,v)\cap N_H[x]=\emptyset$. Also, since $|{\cal C}_H(u,v)\cap A|\ge k$, we obtain that  $|A-N_H(x)|\ge k$. Therefore,   by Theorem \ref{propEqualJoinK_1-H} we conclude the proof. 
\end{proof}

A {\em fan graph}  is defined as the join graph $K_1+P_n$, where $P_n$ is a path of order $n$, and a {\em wheel graph} is defined as the join graph $K_1+C_n$, where $C_n$ is a cycle graph of order $n$. The following closed formulae for the $k$-metric dimension of fan and wheel graphs were obtained in \cite{Estrada-Moreno2013corona,Hernando2005}. Since these graphs have diameter two, we express the result in terms of the $k$-adjacency dimension.


\begin{proposition}\label{value-dim1-fans-wheels}{\rm \cite{Hernando2005}}

\begin{enumerate}[{\rm (i)}]
\item $\adim_1(K_1+P_n)=\displaystyle \left\{\begin{array}{ll}
1, & \textrm{if $n=1$,}\\
2, & \textrm{if $n=2,3,4,5$,}\\
3, & \textrm{if $n=6$,}\\
{\left\lfloor{\frac{2n+2}{5}}\right\rfloor}, & \textrm{otherwise.}\\
\end{array}\right.
$
\item $\adim_1(K_1+C_n)=\left\{\begin{array}{ll}
3, & \textrm{if $n=3,6$,}\\
{\left\lfloor{\frac{2n+2}{5}}\right\rfloor}, & \textrm{otherwise.}\\
\end{array}\right.
$
\end{enumerate}
\end{proposition}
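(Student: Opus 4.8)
The plan is to work directly inside the join $G=K_1+H$, with $H\in\{P_n,C_n\}$, using that $D(G)\le 2$ so that (by the remarks of Section~\ref{SectionDimensional}) a vertex $w$ distinguishes a pair $x,y$ exactly when $w$ is adjacent to precisely one of $x,y$. Write $c$ for the vertex of $K_1$ and $v_1,\dots,v_n$ for the vertices of $H$ listed along the path (resp.\ cycle). Since $c$ is universal, $d_{G,2}(c,v_i)=1$ for every $i$, so $c$ distinguishes no two vertices of $H$; conversely every $v_i$ lying in a generator $S$ automatically distinguishes itself from all other vertices, because its own coordinate is $0$ and no one else's is. Hence the only constraints that really bite come from pairs of $H$-vertices, and for such pairs the fan/wheel distance agrees with the truncated distance $d_{H,2}$ on $H$ (two $H$-vertices are at distance $1$ if consecutive and at distance $2$ otherwise). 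So resolving the $H$-vertices is equivalent to producing an adjacency generator of $H$, and the pairs $(c,v_i)$ will be disposed of at the end.

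The heart of the argument is a gap characterization of the adjacency generators of $H$. I would first show that two non-generator vertices $v_i,v_j$ receive identical distance vectors precisely when $S\cap\{v_{i-1},v_{i+1}\}=S\cap\{v_{j-1},v_{j+1}\}$ as vertex sets; since these sets live at different positions, equality forces either that both vertices are \emph{far} (adjacent to no element of $S$) or that both are adjacent to one and the same generator and to no other. Rephrasing through the gaps $g_1,g_2,\dots$ between consecutive chosen vertices, this yields two clean conditions: \textbf{(A)} at most one vertex may be far, equivalently $\sum_t\max(g_t-3,0)\le 1$; and \textbf{(B)} no chosen vertex may have gaps $\ge 3$ on both sides, equivalently no two consecutive gaps are both $\ge 3$. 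Verifying \textbf{(B)} is, I expect, where most of the care goes, since it is exactly the constraint that rules out a naive domination-type packing and forces the density $2/5$ rather than $1/3$.

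From \textbf{(A)} and \textbf{(B)} the lower bound is a counting estimate: a run of $m-1$ gaps with no two consecutive $\ge 3$ and total excess at most $1$ has length at most $\tfrac52(m-1)+O(1)$, and adding the vertices lying before the first and after the last chosen vertex gives $n\le\tfrac52 m+O(1)$, i.e.\ $m\ge\lfloor(2n+2)/5\rfloor$. For the matching construction I would use the periodic pattern of alternating gaps $3,2,3,2,\dots$, spending the single far vertex allowed by \textbf{(A)} on one gap of length $4$ and adjusting the two ends, then check \textbf{(A)} and \textbf{(B)} directly. For the fan this already settles $n\ge 7$: since $D(P_n)=n-1\ge 6$, Corollary~\ref{Th1HconditionNcD6} gives $\adim_1(K_1+P_n)=\adim_1(P_n)$, so no separate treatment of the pairs $(c,v_i)$ is needed. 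For the wheel, where $D(C_n)$ may fall below $6$, I would instead verify the $(c,v_i)$ pairs by hand, observing that any generator of density $2/5$ leaves each $v_i$ non-adjacent to at least one chosen vertex, so $(c,v_i)$ is resolved automatically.

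It remains to handle the small orders $n\le 6$ for the fan and the cyclic anomalies $n\in\{3,6\}$ for the wheel, which I would do by direct inspection: there the boundary effects and the single-far-vertex budget, rather than the generic alternating pattern, control the value, and the wraparound of the cycle removes the two ``free'' boundary vertices available on the path, which is what shifts the exceptional values (explaining why $K_1+C_3$ and $K_1+C_6$ require $3$). The main obstacle throughout is the tightness of the lower bound: making the gap-and-excess accounting rigorous at the two ends of the path and around the seam of the cycle, so that the additive constant inside the floor comes out exactly as $+2$, is the delicate part, whereas the construction and the reduction to $H$ through the universal vertex $c$ are comparatively routine.
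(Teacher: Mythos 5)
The paper does not actually prove this proposition: it is imported verbatim from \cite{Hernando2005} (with the path/cycle companion from \cite{JanOmo2012}), and the authors only remark that diameter two lets them restate the metric dimension as an adjacency dimension. So any comparison is between your sketch and a citation. That said, your skeleton is sound and is essentially a self-contained re-derivation of special cases of the paper's own later machinery: your observation that the universal vertex $c$ distinguishes no pair of $H$-vertices, and that the pairs $(c,v_i)$ are resolved exactly when $S\not\subseteq N_H(v_i)$, is precisely the content of Theorem \ref{propEqualJoinK_1-H} and Proposition \ref{ResultOmoomiDimK1+H} specialized to $k=1$; your use of Corollary \ref{Th1HconditionNcD6} for $P_n$ with $n\ge 7$ is legitimate; and your gap conditions (A) and (B) are the correct characterization of adjacency generators of paths and cycles (for $n\ge 5$; note $C_4$ has twin pairs and needs a separate word). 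Your handling of the wheel's $(c,v_i)$ pairs for $n\ge 7$ via $|S|\ge 3>|N_{C_n}(v_i)|$ is also correct, and the restriction $S\cap V(H)$ gives the matching lower bound whether or not $c\in S$.

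The genuine incompleteness is exactly where you say it is, and it is not a small remainder: the entire quantitative content of the formula $\left\lfloor\frac{2n+2}{5}\right\rfloor$ lives in the accounting you leave as ``$O(1)$''. In particular, condition (B) must also be imposed between each end segment of the path and the adjacent internal gap (otherwise $v_{s_1-1}$ and $v_{s_1+1}$ collide), the single permitted far vertex interacts with the two ends as well as with the internal gaps, and on the cycle the seam turns the two end segments into one more gap subject to (B) --- this is what produces the different exceptional sets for fans versus wheels. Likewise, the values $\adim_1(K_1+P_6)=3$ and $\adim_1(K_1+C_6)=3$ exceed $\adim_1(P_6)=\adim_1(C_6)=2$ precisely because every two-element adjacency basis of $P_6$ or $C_6$ is contained in some $N_H(v)$; that enumeration, which you defer to ``inspection'', is a required step, not an afterthought. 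As a plan the proposal is correct and would work; as written it establishes the structure of the proof but not the formula itself.
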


\begin{proposition}\label{value-dim1-paths-cycles}{\rm \cite{JanOmo2012}} For any integer $n\ge 4$,
$$\adim_1(P_n)=\adim_1(C_n)=\left\lfloor{\frac{2n+2}{5}}\right\rfloor.$$

\end{proposition}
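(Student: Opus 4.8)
The plan is to prove both equalities by the standard two–sided scheme: an explicit construction for the upper bound and a packing/gap argument for the lower bound, after first reducing the case $k=1$ to a clean combinatorial condition. For $k=1$ an adjacency generator $S$ already resolves every $S$–vertex from all other vertices by itself (such a vertex has $d_{G,2}$–code $0$ there), so the only surviving constraint is that the \emph{traces} $N_G(v)\cap S$ be pairwise distinct as $v$ ranges over $V\setminus S$. On a path $P_n=v_1\cdots v_n$ (and cyclically on $C_n$) each such trace is a subset of $S$ of size at most two, namely which of the two path–neighbours of $v$ lie in $S$. First I would fix the gap notation: writing $S$ as an increasing sequence of positions, the complement splits into maximal runs (gaps) $g_0,g_1,\dots,g_m$ with $\sum_i g_i=n-|S|$.

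For the upper bound I would take the periodic pattern selecting the two positions $\equiv 2,4 \pmod 5$ in every block of five consecutive vertices, and check that the induced traces $\{s\},\{s,s'\},\dots$ are all distinct. A direct count shows this set has size $2\lfloor n/5\rfloor$ plus a correction depending on $n\bmod 5$, which equals $\lfloor (2n+2)/5\rfloor$; the only fiddly point is the last block, where for some residues (e.g. $n\equiv 3 \pmod 5$) the last chosen vertex must be shifted by one to prevent two end vertices from receiving the same singleton trace. The cyclic construction for $C_n$ is identical modulo the wrap–around.

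The heart of the argument is the lower bound $|S|\ge\lfloor(2n+2)/5\rfloor$, which I would obtain by translating the distinctness of traces into constraints on the gap sequence. The three governing facts are: (i) at most one vertex can have empty trace, so an internal gap of length $\ge 4$ is impossible and at most one internal gap has length exactly $3$ (an end gap of length $\ge 2$ likewise spends this single ``empty'' budget); (ii) around any $s\in S$ at most one of its two neighbours may be a pendant vertex whose \emph{only} $S$–neighbour is $s$, since otherwise both would carry the trace $\{s\}$ (this single fact also handles the end effects, where an end vertex and an adjacent gap vertex collide); and (iii) two equal size–two traces are forbidden, which rules out two consecutive internal gaps of length $2$. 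Feeding (i)--(iii) into an amortized count over the gaps yields $2\sum_i g_i\le 3|S|+2$, i.e. $2(n-|S|)\le 3|S|+2$, whence $|S|\ge\lceil(2n-2)/5\rceil=\lfloor(2n+2)/5\rfloor$. I expect the main obstacle to be making (i)--(iii) interact cleanly at the two ends of the path (and around the wrap–point of the cycle), where the same collision can be produced in several different ways; a careful case analysis of how the unique admissible gap of length $3$ and the pendant vertices may be placed is what turns the heuristic inequality into a rigorous one.

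Alternatively, for large $n$ one can bypass the direct lower bound: since $D(P_n)=n-1\ge 6$ for $n\ge 7$, Corollary \ref{Th1HconditionNcD6} gives $\adim_1(P_n)=\adim_1(K_1+P_n)$, and the value is then read off from Proposition \ref{value-dim1-fans-wheels}(i); the analogous reduction applies to $C_n$ once $D(C_n)\ge 6$. This shortcut, however, still leaves the small cases (all $4\le n\le 6$ for paths and $4\le n\le 11$ for cycles, the latter because the join–equality fails when $D(C_n)<6$, as the discrepancy at $n=6$ shows) to be settled by the explicit construction together with the bound $\adim_1(G)\ge 2$ coming from Proposition \ref{propValueClassic2}. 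For this reason I would keep the self–contained construction plus packing argument as the primary route.
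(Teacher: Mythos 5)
The paper does not prove this proposition at all: it is imported verbatim from \cite{JanOmo2012} (just as Proposition \ref{value-dim1-fans-wheels} is imported from \cite{Hernando2005}), so any proof you give is necessarily a different route from the paper's. Your self-contained argument is essentially sound. The reduction of the $k=1$ case to ``the traces $N(v)\cap S$ for $v\notin S$ are pairwise distinct'' is exactly right, the periodic construction gives the correct count $\lfloor (2n+2)/5\rfloor$ for every residue of $n$ modulo $5$ (with the end adjustment you flag), and the gap accounting does close: writing $a,b,c$ for the numbers of internal gaps of length $1,2,3$ and $e_1,e_2$ for end gaps of length $1,2$, your facts (i) and (ii) give $c+e_2\le 1$ and $2(b+c)+e_1+e_2\le |S|$, which together with $a+b+c\le |S|-1$ yield $\sum g_i\le \tfrac{3|S|}{2}+1$, i.e.\ your inequality $2(n-|S|)\le 3|S|+2$ and hence $|S|\ge\lceil(2n-2)/5\rceil=\lfloor(2n+2)/5\rfloor$; the cycle case is the same computation with no end gaps. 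Two small repairs: your fact (iii) is mislabelled --- two consecutive gaps of length $2$ around a common $s'\in S$ collide via two \emph{singleton} traces $\{s'\}$, which is already your fact (ii), while genuine size-two traces $\{s,s'\}$ determine the vertex and never collide (except in $C_4$, where antipodal vertices are twins, so that case must be handled separately, e.g.\ by exhibiting $\{v_1,v_2\}$); neither affects the final count. Your alternative shortcut via Corollary \ref{Th1HconditionNcD6} and Proposition \ref{value-dim1-fans-wheels} is legitimate and non-circular, but, as you say, it only covers $n\ge 7$ for paths and $n\ge 12$ for cycles, so the direct construction-plus-packing argument is the right primary route. What your approach buys over the paper's is self-containedness; what the citation buys the authors is brevity, since the result is only used here as an input to the join-graph computations.
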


Notice that by Propositions  \ref{value-dim1-fans-wheels} and \ref{value-dim1-paths-cycles}, for any $n\ge 4,$ $n\ne 6$, we have that
 $$\adim_1(P_n)=\adim_1(K_1+P_n)=\adim_1(C_n)=\adim_1(K_1+C_n).$$ 

In order to show the relationship between the $k$-adjacency dimension of fan (wheel) graphs and path (cycle) graphs, we state the following known results.

\begin{proposition}\label{value-fans-wheels}{\rm \cite{Estrada-Moreno2013corona}}
\begin{enumerate}[{\rm (i)}]
\item $\adim_2(K_1+P_n)=\displaystyle \left\{\begin{array}{ll}
3, & \textrm{if $n=2$,}\\
4, & \textrm{if $n=3,4,5$,}\\
{\left\lceil\frac{n+1}{2}\right\rceil }, &\textrm{if $n\ge 6.$}\\
\end{array}\right.
$ 
\item  $\adim_2(K_1+C_n)=\displaystyle \left\{\begin{array}{ll}
4, & \textrm{if $n=3,4,5,6$,}\\
{\left\lceil\frac{n}{2}\right\rceil}, & \textrm{if $ n\ge 7$.}\\
\end{array}\right.
$

\item $\adim_3(K_1+P_n)=\displaystyle \left\{\begin{array}{ll}
5, & \textrm{if $n=4,5$,}\\
{n-\left\lfloor\frac{n-4}{5}\right\rfloor}, &\textrm{if $  n\ge 6$.}\\
\end{array}\right.
$ 
\item  $\adim_3(K_1+C_n)=\displaystyle \left\{\begin{array}{ll}
5, & \textrm{if $n=5,6$,}\\
{n-\left\lfloor\frac{n}{5}\right\rfloor }, & \textrm{if $ n\ge 7.$}\\
\end{array}\right.
$

\item  $\adim_4(K_1+C_n)=\displaystyle \left\{\begin{array}{ll}
6, & \textrm{if $n=5,6$,}\\
{n }, & \textrm{if $ n\ge 7.$}\\
\end{array}\right.
$
\end{enumerate}
\end{proposition}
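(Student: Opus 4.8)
The plan is to reduce every one of the five computations to two independent conditions on a candidate set $A\subseteq V(K_1+H)$, and then to handle the path and cycle cases by separate lower- and upper-bound arguments. Write $v$ for the vertex of $K_1$. Recalling from the discussion preceding Proposition~\ref{propLowerBoundJoinK_1-H} that $\mathcal{C}_{K_1+H}(x,y)=\mathcal{C}_H(x,y)$ whenever $x,y\in V(H)$, while $\mathcal{C}_{K_1+H}(v,y)=\{v\}\cup(V(H)-N_H(y))$, a set $A$ is a $k$-adjacency generator for $K_1+H$ if and only if \textbf{(a)} $A\cap V(H)$ is a $k$-adjacency generator for $H$, and \textbf{(b)} $|A\cap(\{v\}\cup(V(H)-N_H(y)))|\ge k$ for every $y\in V(H)$. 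Since $D(K_1+H)\le 2$ one works throughout with adjacency inside the join, and condition~(b) is precisely the hypothesis controlling Theorem~\ref{propEqualJoinK_1-H}: whenever it is automatically satisfied, the computation collapses to the corresponding count for $H$.

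For the lower bounds I would isolate the pairs whose distinguishing sets are smallest and invoke Remark~\ref{remAdjTauk}, which forces $\mathcal{C}_k(G)\subseteq A$. In $P_n$ the interior consecutive pairs satisfy $|\mathcal{C}_{P_n}(u_i,u_{i+1})|=4$, while the two end pairs give $|\mathcal{C}_{P_n}(u_1,u_2)|=|\mathcal{C}_{P_n}(u_{n-1},u_n)|=3$; in $C_n$ every consecutive pair has distinguishing set of size $4$. This already explains the $3$- and $4$-adjacency dimensionality behind parts (iii)–(v). For $k=4$ on the cycle one has $\mathcal{C}_4(C_n)=V(C_n)$, and since condition~(b) holds inside the join because $|V(C_n)-N(y)|=n-2\ge 4$ for $n\ge 6$, Theorem~\ref{theoAdjDimkn} yields $\adim_4(K_1+C_n)=n$, giving part~(v). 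The harder lower bounds (parts (i)–(iv)) come from a covering argument: partition the path or cycle into consecutive blocks of five vertices — the same period underlying the classical value $\lfloor(2n+2)/5\rfloor$ of Proposition~\ref{value-dim1-paths-cycles} — and show that the overlapping distinguishing sets of consecutive pairs force at least the claimed number of vertices per block into $A$, namely at most one omission per block of five when $k=3$ and essentially every second vertex retained when $k=2$.

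For the upper bounds I would exhibit explicit generators of the claimed cardinality and verify (a) and (b) directly. For $k=2$ a set consisting of roughly every second vertex of the path (resp. cycle), adjusted at the endpoints, distinguishes all interior pairs, and condition~(b) is immediate because such a set is nowhere near being contained in a single closed neighbourhood. For $k\in\{3,4\}$ the generator is the complement of a sparse set omitting one vertex from every block of five, placed so as to avoid the forced end triples of the path; one then checks that each consecutive and each non-consecutive pair still keeps $k$ distinguishers and that the apex condition~(b) holds. The constants $n-\lfloor(n-4)/5\rfloor$ and $n-\lfloor n/5\rfloor$ are exactly the sizes of these complements, the offset difference between path and cycle reflecting the two forced triples $\{u_1,u_2,u_3\}$ and $\{u_{n-2},u_{n-1},u_n\}$ in $P_n$.

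The main obstacle is twofold. First, matching the floor/ceiling constants exactly requires a tight extremal analysis of how the distinguishing sets of consecutive pairs overlap across block boundaries, so that the lower bound from the covering argument meets the size of the explicit construction; the endpoints of the path break the clean period-five pattern and must be treated separately. Second, the small cases ($n\le 6$ in (i)–(ii), $n\in\{4,5\}$ in (iii), $n\in\{5,6\}$ in (iv)–(v)) lie outside the asymptotic regime and are settled by direct inspection, using Remark~\ref{coro2Addimesional} and the monotonicity of Theorem~\ref{theoMonotyForAdjKs} to pin down exact values; for instance $\adim_4(K_1+C_5)=6$ arises precisely because $|V(C_5)-N(y)|=3<4$ makes condition~(b) fail, forcing $v$ into the basis. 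The recurring check that must not be overlooked is condition~(b): automatic for the cycle once $n\ge 6$, but for the fan one must confirm the chosen generator is never swallowed by the non-neighbourhood of a single path vertex, which is where the genuine interaction between $H$ and the apex $v$ enters.
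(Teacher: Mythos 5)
The paper contains no proof of this proposition: it is imported from \cite{Estrada-Moreno2013corona}, and the paper's own logic in fact runs in the opposite direction to yours --- it takes these fan and wheel values as given and combines them with Proposition \ref{propLowerBoundJoinK_1-H} and Theorem \ref{propEqualJoinK_1-H} to \emph{deduce} the path and cycle values of Propositions \ref{value-adj-Paths} and \ref{value-adj-Cycles}. Your proposal is therefore a genuinely different, self-contained and non-circular route: you compute on $P_n$ and $C_n$ directly and lift to the join via your conditions (a) and (b). That reduction is correct (the apex $v$ never distinguishes two vertices of $H$, and the pair $(v,y)$ is distinguished exactly by $\{v\}\cup(V(H)-N_H(y))$), and your treatment of part (v) is sound: $\mathcal{C}_4(C_n)=V(C_n)$ forces any $4$-adjacency generator to contain $V(C_n)$, condition (b) holds for $n\ge 6$ since $|V(C_n)-N(y)|=n-2\ge 4$, and for $n=5$ it fails, forcing the apex in and giving the value $6$.

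The weak point is the lower bound in parts (i)--(iv), where the ``covering argument'' is only gestured at and, as sketched, would not yield the exact constants. Taking $k=3$ on the path: the consecutive pairs alone only force any two omitted vertices to be at least $4$ apart; it is the distance-two pairs, with $\mathcal{C}_{P_n}(v_i,v_{i+2})=\{v_{i-1},v_i,v_{i+2},v_{i+3}\}$, that upgrade the required spacing to at least $5$, and the near-endpoint pairs $\mathcal{C}_{P_n}(v_1,v_3)=\{v_1,v_3,v_4\}$ and $\mathcal{C}_{P_n}(v_{n-2},v_n)=\{v_{n-3},v_{n-2},v_n\}$ that force $v_4$ and $v_{n-3}$ --- in addition to the two end triples you identify --- into every $3$-adjacency basis. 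Only with all of these constraints does the count of admissible omissions come out to $\left\lfloor\frac{n-4}{5}\right\rfloor$; with consecutive pairs alone one would overcount the savings. Analogous refinements are needed for the cycle and for $k=2$, and the small cases must indeed be settled by hand as you say. So your outline identifies the right ingredients and a workable architecture, but the extremal bookkeeping where the actual content of the cited result lies is deferred rather than carried out.
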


 By Theorem \ref{theokadjacency} we have that any path graph of order at least four is $3$-adjacency dimensional and any cycle graph of order at least five is $4$-adjacency dimensional.
From Propositions \ref{propLowerBoundJoinK_1-H}  and  \ref{value-fans-wheels}  we will derive closed formulae for the $k$-adjacency dimension  of paths (for $k\in\{2,3\}$) and cycles  (for $k\in\{2,3,4\}$).

\begin{proposition}\label{value-adj-Paths}
For any integer $n\ge 4$,
$$\adim_2(P_n)=\left\lceil\frac{n+1}{2}\right\rceil \; {and } \;\adim_3(P_n)=n-\left\lfloor\frac{n-4}{5}\right\rfloor.$$
\end{proposition}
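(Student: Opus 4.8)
The plan is to reduce the computation of $\adim_2(P_n)$ and $\adim_3(P_n)$ to the already-known $k$-adjacency dimension of the fan $K_1+P_n$ recorded in Proposition \ref{value-fans-wheels}, treating a few small orders by hand. For $n\ge 7$ I would first note that $P_n$ is $3$-adjacency dimensional and that $\mathcal{C}(K_1+P_n)=\min\{\mathcal{C}(P_n),\,n-\Delta(P_n)+1\}=\min\{3,n-1\}=3$, so $k\in\{2,3\}$ lies in the admissible range for both $P_n$ and $K_1+P_n$. The key observation is that \emph{every} $k$-adjacency basis $A$ of $P_n$ automatically satisfies hypothesis (i) of Theorem \ref{propEqualJoinK_1-H}: since $P_n$ has order $n\ge 7$, Theorem \ref{Bound-le-k+2} gives $|A|=\adim_k(P_n)\ge k+2$, and because $\Delta(P_n)=2$ we obtain $|A-N_{P_n}(y)|\ge |A|-2\ge k$ for every $y\in V(P_n)$. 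Hence Theorem \ref{propEqualJoinK_1-H} yields $\adim_k(K_1+P_n)=\adim_k(P_n)$ for all $n\ge 7$ and $k\in\{2,3\}$. (The inequality $\adim_k(P_n)\le\adim_k(K_1+P_n)$ is exactly Proposition \ref{propLowerBoundJoinK_1-H}; the equality theorem upgrades it to an identity.) Substituting the closed formulae of Proposition \ref{value-fans-wheels}(i),(iii) then gives precisely $\adim_2(P_n)=\lceil (n+1)/2\rceil$ and $\adim_3(P_n)=n-\lfloor (n-4)/5\rfloor$ in this range.

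It remains to settle the orders $n\in\{4,5,6\}$, where the reduction is unavailable. For $\adim_3$ I would instead invoke Theorem \ref{theoAdjDimkn}: a direct computation of $\mathcal{C}_{P_n}(x,y)$ shows that the only pairs with $|\mathcal{C}_{P_n}(x,y)|=3$ are $(v_1,v_2),(v_1,v_3),(v_{n-1},v_n),(v_{n-2},v_n)$, whence $\mathcal{C}_3(P_n)=\{v_1,v_2,v_3,v_4\}\cup\{v_{n-3},v_{n-2},v_{n-1},v_n\}$; this equals $V(P_n)$ exactly when $n\le 8$, so Theorem \ref{theoAdjDimkn} gives $\adim_3(P_n)=n=n-\lfloor(n-4)/5\rfloor$ for $4\le n\le 8$, covering the missing small cases. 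For $\adim_2$ with $n\in\{4,5\}$, the lower bound $\adim_2(P_n)\ge 3$ follows from Proposition \ref{propValueClassic2} (since $P_n\notin\{P_2,P_3,\overline{P}_2,\overline{P}_3\}$), matched by the explicit generators $\{v_1,v_2,v_4\}$ and $\{v_1,v_3,v_5\}$; for $n=6$ the lower bound $\adim_2(P_6)\ge 4$ comes from the two end pairs $(v_1,v_2)$ and $(v_5,v_6)$, which each need two basis vertices inside the \emph{disjoint} triples $\{v_1,v_2,v_3\}$ and $\{v_4,v_5,v_6\}$, with the matching upper bound supplied by Proposition \ref{propLowerBoundJoinK_1-H} and Proposition \ref{value-fans-wheels}(i).

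The conceptual crux — and the step I would present most carefully — is the degree/size inequality $|A-N_{P_n}(y)|\ge |A|-\Delta(P_n)\ge k$, which is what turns the equivalence in Theorem \ref{propEqualJoinK_1-H} into an exact transfer of the fan values to paths. The genuine obstacle, however, lies in the boundary orders: the reduction fails for $n\le 6$ precisely because Theorem \ref{Bound-le-k+2} requires order at least $7$, and indeed $\adim_2(P_4)=\adim_2(P_5)=3$ are \emph{strictly smaller} than the corresponding fan dimensions, so these values cannot be recovered from the join at all and must be pinned down directly. Thus the main care goes into verifying that no $2$-adjacency generator smaller than the claimed size exists in the small cases, while the bulk range $n\ge 7$ follows cleanly from the join machinery.
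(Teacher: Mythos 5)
Your proposal is correct and follows essentially the same route as the paper: for $n\ge 7$ both arguments reduce to the fan values of Proposition \ref{value-fans-wheels} by verifying hypothesis (i) of Theorem \ref{propEqualJoinK_1-H}, and both settle the orders $n\in\{4,5,6\}$ by hand with the same lower bounds and (for $\adim_2$) the same explicit generators. The only differences are minor and in your favour: you verify $|A-N_{P_n}(y)|\ge k$ via the cleaner inequality $|A|-\Delta(P_n)\ge (k+2)-2=k$ from Theorem \ref{Bound-le-k+2} rather than via the end-pair sets $\mathcal{C}_{P_n}(v_1,v_2)$ and $\mathcal{C}_{P_n}(v_{n-1},v_n)$, and you dispatch $\adim_3(P_n)=n$ for $n\le 8$ through Theorem \ref{theoAdjDimkn} (as the paper itself foreshadows after that theorem) instead of the paper's direct union argument.
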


\begin{proof} 
Let $k\in \{2,3\}$ and   $V(P_n)=\{v_1,v_2,...,v_n\}$, where    $v_i$ is adjacent to $v_{i+1}$ for every $i\in \{1,...,n-1\}$.

 We first  consider the case  $n\ge 7$.
Since ${\cal C}_{P_n}(v_1,v_2)=\{v_1,v_2,v_3\}$ and ${\cal C}_{P_n}(v_{n-1},v_{n})=\{v_{n-2},v_{n-1},v_n\}$, we deduce that for any $k$-adjacency basis $A$ of $P_n$ and any $y\in V(T)$,  $|A-N_{P_n}(y)|\ge k$. Hence,  Theorem \ref{propEqualJoinK_1-H} leads to $\adim_k(K_1+P_n)=\adim_k(P_n)$. Therefore, by Proposition \ref{value-fans-wheels} we deduce the result for $n\ge 7$.

Now, for $n=6$, since  ${\cal C}_{P_6}(v_1,v_2)=\{v_1,v_2,v_3\}$ and ${\cal C}_{P_6}(v_{5},v_{6})=\{v_{4},v_{5},v_6\}$, we deduce that $\adim_2(P_6)\ge 4$ and $\adim_3(P_6)=6$. In addition, $\{v_1,v_3,v_4,v_6\}$ is a $2$-adjacency generator for $P_6$ and so  $\adim_2(P_6)=4$. 

From now on, let  $n\in\{4,5\}$.   By Proposition \ref{propLowerBoundJoinK_1-H} we have $\dim_k(K_1+P_n) \ge\adim_k(P_n)$. It remains to prove that $\adim_k(K_1+P_n)\le\adim_k(P_n)$.

If $n=4$ or $n=5$, then by Proposition \ref{propValueClassic2}, $\adim_2(P_n)\ge 3$. Note that $\{v_1,v_2,v_4\}$ and $\{v_1,v_3,v_5\}$ are $2$-adjacency generators for $P_4$ and $P_5$, respectively. Thus, $\adim_2(P_4)=\adim_2(P_5)=3$. Let $A$ be a $3$-adjacency basis of $P_n$, where $n\in\{4,5\}$. Since $\mathcal{C}_{P_n}(v_1,v_2)=\{v_1,v_2,v_3\}$ and $\mathcal{C}_{P_n}(v_{n-1},v_n)=\{v_{n-2},v_{n-1},v_n\}$, we have that $(A\cap\mathcal{C}_{P_n}(v_1,v_2))\cup(A\cap\mathcal{C}_{P_n}(v_{n-1},v_n))=V(P_n)$, and as consequence, $A=V(P_n)$. Therefore, $\adim_3(P_4)=4$ and $\adim_3(P_5)=5$ and, as a consequence, the result follows.
\end{proof}


\begin{proposition}\label{value-adj-Cycles}
For any integer $n\ge 5$,
$$\adim_2(C_n)=\left\lceil\frac{n}{2}\right\rceil,\;  \adim_3(C_n)= n-\left\lfloor\frac{n}{5}\right\rfloor\; {and} \;
\adim_4(C_n)=n.$$
\end{proposition}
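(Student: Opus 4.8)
The plan is to mirror the strategy used for paths in Proposition \ref{value-adj-Paths}: for large $n$ I would reduce the computation of $\adim_k(C_n)$ to the already-known $k$-adjacency dimension of the wheel $K_1+C_n$ (Proposition \ref{value-fans-wheels}) via Theorem \ref{propEqualJoinK_1-H}, while the small cases $n\in\{5,6\}$ are treated by hand. Throughout I write $V(C_n)=\{v_1,\ldots,v_n\}$ with $v_i\sim v_{i+1}$ (indices modulo $n$), and recall from Section \ref{SectionDimensional} that $C_n$ is $4$-adjacency dimensional for every $n\ge 5$, so that $\mathcal{C}(C_n)=4$ and $\adim_k(C_n)$ is defined exactly for $k\in\{1,2,3,4\}$.

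First I would treat $n\ge 7$. The key observation is that every $k$-adjacency basis of $C_n$ automatically satisfies hypothesis (i) of Theorem \ref{propEqualJoinK_1-H}. Indeed, by Theorem \ref{Bound-le-k+2} and the fact that $\mathcal{C}(C_n)=4$, for each $k\in\{2,3,4\}$ any $k$-adjacency basis $A$ has $|A|=\adim_k(C_n)\ge k+2$. Since every vertex of $C_n$ has exactly two neighbours, $|A\cap N_{C_n}(y)|\le 2$ and hence $|A-N_{C_n}(y)|\ge |A|-2\ge k$ for all $y\in V(C_n)$. Theorem \ref{propEqualJoinK_1-H} then yields $\adim_k(C_n)=\adim_k(K_1+C_n)$, and substituting the wheel formulae of Proposition \ref{value-fans-wheels} (valid for $n\ge 7$) gives the three claimed expressions simultaneously.

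It then remains to dispose of $n\in\{5,6\}$. Several values follow immediately from earlier results: $\adim_3(C_5)=4$ from Remark \ref{rem3_4}, $\adim_4(C_5)=5$ from Remark \ref{rem4_5}, and $\adim_4(C_6)=6$ from the discussion following Remark \ref{rem4_5} (equivalently, from $\mathcal{C}_4(C_6)=V(C_6)$ together with Theorem \ref{theoAdjDimkn}). For the two second-order values I would combine a lower bound from monotony with an explicit generator: since $\adim_1(C_5)=\adim_1(C_6)=2$ by Proposition \ref{value-dim1-paths-cycles}, Corollary \ref{firstConsequenceAdjMonotony}(i) gives $\adim_2(C_n)\ge 3$ for $n\in\{5,6\}$, and one checks directly that $\{v_1,v_3,v_5\}$ is a $2$-adjacency generator in each case, so $\adim_2(C_5)=\adim_2(C_6)=3=\lceil n/2\rceil$. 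For $\adim_3(C_6)$ the upper bound $\adim_3(C_6)\le 5$ is witnessed by $V(C_6)-\{v_1\}$, which misses only one vertex and therefore meets every set $\mathcal{C}_{C_6}(x,y)$ in at least $3$ vertices.

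The main obstacle will be the matching lower bound $\adim_3(C_6)\ge 5$. Because every pair of vertices of $C_6$ satisfies $|\mathcal{C}_{C_6}(x,y)|\in\{4,6\}$, no pair has $|\mathcal{C}_{C_6}(x,y)|=3$, so $\mathcal{C}_3(C_6)=\emptyset$ and Remark \ref{remAdjTauk} gives no information. Instead I would argue combinatorially: listing the twelve sets $\mathcal{C}_{C_6}(x,y)$ of cardinality $4$ (those arising from the adjacent and the distance-two pairs), one verifies that every two-element subset of $V(C_6)$ is contained in at least one of them. A $3$-adjacency generator $A$ must satisfy $|A\cap\mathcal{C}_{C_6}(x,y)|\ge 3$, that is $|(V(C_6)-A)\cap\mathcal{C}_{C_6}(x,y)|\le 1$, for each such $4$-set; hence $V(C_6)-A$ can contain no pair of vertices, so $|V(C_6)-A|\le 1$ and $\adim_3(C_6)\ge 5$. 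Together with the upper bound this gives $\adim_3(C_6)=5=n-\lfloor n/5\rfloor$, completing all cases.
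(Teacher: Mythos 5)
Your proposal is correct and follows essentially the same route as the paper: for $n\ge 7$ reduce to $\adim_k(K_1+C_n)$ via Theorem \ref{propEqualJoinK_1-H} and Proposition \ref{value-fans-wheels}, and settle $n\in\{5,6\}$ by hand using monotony and explicit generators. The only differences are local: you verify hypothesis (i) of Theorem \ref{propEqualJoinK_1-H} by combining $\adim_k(C_n)\ge k+2$ (Theorem \ref{Bound-le-k+2}) with the fact that every vertex of $C_n$ has degree two, where the paper instead exhibits the set $\mathcal{C}_{C_n}(v_{i+3},v_{i+4})$ disjoint from $N_{C_n}(v_i)$; and you prove $\adim_3(C_6)\ge 5$ by showing every $2$-subset of $V(C_6)$ lies in some $4$-element set $\mathcal{C}_{C_6}(x,y)$ (so the complement of a generator has at most one vertex), where the paper runs a case analysis over the three non-equivalent $4$-subsets of $V(C_6)$ -- both arguments are valid.
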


\begin{proof} 
Let $k\in \{2,3,4\}$ and  $V(C_n)=\{v_1,v_2,...,v_n\}$, where    $v_i$ is adjacent to $v_{i+1}$ and the subscripts are taken modulo $n$.
 
 We first  consider the case  $n\ge 7$.
Since ${\cal C}_{C_n}(v_{i+3},v_{i+4})=\{v_{i+2},v_{i+3},v_{i+4},v_{i+5}\}$, we deduce that for any $k$-adjacency basis $A$ of $C_n$,  $|A-N_{C_n}(v_i)|\ge k$. Hence,  Theorem \ref{propEqualJoinK_1-H} leads to $\adim_k(K_1+C_n)=\adim_k(C_n)$. Therefore, by Proposition \ref{value-fans-wheels}    we deduce the result for $n\ge 7$.

From now on, let $n\in\{5,6\}$.   By Proposition \ref{propLowerBoundJoinK_1-H} we have $\dim_k(K_1+G) \ge\adim_k(G)$. It remains to prove that $\adim_k(K_1+H)\le\adim_k(H)$.

 By Theorem \ref{theoMonotyForAdjKs}, we deduce that $2=\adim_1(C_5)<\adim_2(C_5)<\adim_3(C_5)<\adim_4(C_5)\le 5$. Hence, $\adim_2(C_5)=3$, $\adim_3(C_5)=4$ and $\adim_4(C_5)= 5$. Therefore, for $n=5$  the result follows.

By Theorem \ref{theoMonotyForAdjKs}, $\adim_2(C_6)>\adim_1(C_6)=2$ and, since $\{v_1,v_3,v_5\}$ is a $2$-adjacency generator for $C_6$, we obtain that  $\adim_2(C_6)=3$. Now, let $A_4$ be a $4$-adjacency basis of $C_6$. If $|A_4|\le 5$, then there exists at least one vertex which does not belong to $A_4$, say $v_1$. Then, $|\mathcal{C}_{C_n}(v_1,v_2)\cap A_4|\le 3$, which is a contradiction. Thus, $\adim_4(C_6)=|A_4|=6$. Let $A_3^1=\{v_1,v_2,v_3,v_4\}$, $A_3^2=\{v_1,v_2,v_3,v_5\}$ and $A_3^3=\{v_1,v_2,v_4,v_5\}$. Note that any  manner of selecting four different vertices from $C_6$ is equivalent to some of these $A_3^1, A_3^2, A_3^3$. Since $|\mathcal{C}_{C_n}(v_5,v_6)\cap A_3^1|=|\{v_1,v_4\}|=2<3$, $|\mathcal{C}_{C_n}(v_4,v_6)\cap A_3^2|=|\{v_1,v_3\}|=2<3$ and $|\mathcal{C}_{C_n}(v_1,v_2)\cap A_3^3|=|\{v_1,v_2\}|=2<3$,  we deduce that $\adim_3(C_6)\ge 5>|A_3^1|=|A_3^2|=|A_3^3|=4$. By Theorem \ref{theoMonotyForAdjKs}, $5\le \adim_3(C_6)<\adim_4(C_6)\le 6$. Thus,  $\adim_3(C_6)=5$  and, as a consequence, the result follows.
\end{proof}


By  Propositions \ref{value-dim1-fans-wheels}, \ref{value-dim1-paths-cycles}, \ref{value-fans-wheels}, \ref{value-adj-Paths} and \ref{value-adj-Cycles}  we observe that for any  $k\in\{1, 2,3\}$ and $n\ge7$, $\adim_k(K_1+P_n)=\adim_k(P_n)$  and for any  $k\in\{1,2,3,4\}$,  $\adim_k(K_1+C_n)=\adim_k(C_n)$. The next result is devoted to characterize the trees where $\adim_k(K_1+T)=\adim_k(T)$.


\begin{proposition}
Let $T$ be a tree. The following statements hold.
\begin{enumerate}[{\rm(a)}]
\item $\adim_1(K_1+T)=\adim_1(T)$ if and only if   $T\not\in {\cal F}_1= \{P_2, P_3, P_6, K_{1,n},T'\}$, where  $n\ge 3$ and $T'$ is obtained from $P_5\cup \{K_1\}$ by  joining by an edge  the vertex of $K_1$   to the central vertex of $P_5$.
\item $\adim_2(K_1+T)=\adim_2(T)$ if and only if $T\not\in {\cal F}_2=\{P_r,K_{1,n},T'\}$, where $r\in\{2,\ldots,5\}$,  $n\ge 3$ and $T'$ is a  graph obtained from $K_{1,n}\cup K_2$  by    joining by an edge one leaf of $K_{1,n}$ to one leaf of $K_2$.
\item $\adim_3(K_1+T)=\adim_3(T)$ if and only if $T\not\in {\cal F}_3=\{P_4,P_5\}$.
\end{enumerate}
\end{proposition}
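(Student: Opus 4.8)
The whole proof rests on Theorem \ref{propEqualJoinK_1-H}: for a fixed $k$, the identity $\adim_k(K_1+T)=\adim_k(T)$ holds precisely when $T$ carries a $k$-adjacency basis $A$ that is \emph{well spread}, meaning $|A-N_T(y)|\ge k$ for every $y\in V(T)$. So the plan is to determine, for each $k\in\{1,2,3\}$, exactly which trees admit such a basis and to match the answer against $\mathcal{F}_k$. The first move is to invoke Corollary \ref{Th1HconditionNcD6}: every tree of diameter at least six already satisfies the equality for all admissible $k$. Hence each exceptional tree has diameter at most five, and both directions of the argument may be restricted to that range. I would classify diameter-$\le 5$ trees by their centre: stars $K_{1,n}$ (diameter $2$), double stars (diameter $3$), single-centre spiders with legs of length $\le 2$ (diameter $4$), and trees with a central edge and legs of length $\le 2$ (diameter $5$). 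Throughout, the earlier remark that a tree of order $\ge 3$ is either $2$- or $3$-adjacency dimensional (the former exactly when two leaves share a support) tells us when $\adim_3$ is even defined, so part (c) is understood to range over twins-free trees.

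For the necessity direction, that every $T\in\mathcal{F}_k$ breaks the equality, the paths are dispatched by a direct comparison of the closed formulae already in hand: $\adim_k(P_n)$ from Propositions \ref{value-dim1-paths-cycles} and \ref{value-adj-Paths} against $\adim_k(K_1+P_n)$ from Propositions \ref{value-dim1-fans-wheels} and \ref{value-fans-wheels}. This comparison isolates exactly the short paths named in each $\mathcal{F}_k$, the two values agreeing for all larger orders. The stars and the trees $T'$ I would treat through a single \emph{concentration} mechanism. In $K_{1,n}$ the $n$ leaves form one twin class, so every $k$-adjacency basis must swallow almost all of them (all $n-1$ leaves but one when $k=1$, all $n$ when $k=2$), and these lie in $N_T(c)$ for the centre $c$; this forces $|A-N_T(c)|<k$, so condition (i) of Theorem \ref{propEqualJoinK_1-H} fails. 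The same idea handles $T'$: its $n-1$ twin leaves are forced into $A$ and sit inside $N_T(c)$, and one checks using Remark \ref{remAdjTauk} that the pairs living on the attached tail cannot be $k$-distinguished while still leaving $k$ basis elements outside $N_T(c)$; hence no well-spread basis exists.

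For the sufficiency direction, that every diameter-$\le 5$ tree outside $\mathcal{F}_k$ does admit a well-spread basis, I would build $A$ explicitly. The governing principle is that a high-degree centre $c$ meets each of its legs in a single vertex, so $|A\cap N_T(c)|$ is controlled by choosing, on each leg of length $\ge 2$, the \emph{outer} vertices in preference to the one adjacent to $c$, and by placing $c$ itself in $A$. Starting from the forced set $\mathcal{C}_k(T)\subseteq A$ (Remark \ref{remAdjTauk}), I would complete it to a $k$-adjacency basis subject to this outer-vertex rule, and then verify $|A-N_T(y)|\ge k$ case by case for leaves, for degree-two supports, and for the one or two centres. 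Only the centres are delicate, and the defining feature of a non-exceptional tree, namely that it has enough legs of length $\ge 2$ (or two genuinely nontrivial sides of a central edge), is exactly what supplies the required $k$ basis elements off $N_T(c)$; the lower bound $\adim_k(K_1+T)\ge\adim_k(T)$ of Proposition \ref{propLowerBoundJoinK_1-H} then closes the equality.

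The step I expect to be the main obstacle is this sufficiency construction in the diameter-four and -five cases, where $A$ must remain a bona fide $k$-adjacency basis, hence must absorb $\mathcal{C}_k(T)$ and the distinguishers of every close pair, while simultaneously being kept off a centre's neighbourhood; on short legs these two demands pull against one another. The borderline trees $K_{1,n}$ and $T'$ are precisely where the tension cannot be resolved, and a tidy way to make the dichotomy rigorous is to prove a small auxiliary lemma bounding $|A\cap N_T(c)|$ for a carefully chosen basis in terms of the number and lengths of the legs at $c$; the families $\mathcal{F}_k$ then emerge as exactly the configurations in which this bound forces $|A-N_T(c)|<k$.
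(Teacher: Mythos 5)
Your master tool is the right one -- the paper's proof also runs everything through the equivalence of Theorem \ref{propEqualJoinK_1-H}, and your treatment of the paths (comparing the closed formulae), the stars, and the two specific families $T'$ is essentially what the paper does for the necessity direction. However, there is a genuine gap in the sufficiency direction, which is where almost all of the actual work lies. Opening with Corollary \ref{Th1HconditionNcD6} to restrict to diameter at most five is legitimate, but it does not reduce the problem to the four shapes you list: a tree of diameter $4$ or $5$ need not be a spider or a ``central edge with legs'' -- it can have arbitrarily many vertices of degree at least three (take a centre $c$ whose neighbours each carry several pendant leaves, for instance). So the case analysis as you describe it simply does not cover all trees outside $\mathcal{F}_k$, and the part you yourself flag as ``the main obstacle'' -- producing, for every such tree, a $k$-adjacency basis $A$ with $|A-N_T(y)|\ge k$ for all $y$ -- is left as an unstated auxiliary lemma and an unverified greedy rule. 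In particular, your ``outer-vertex'' construction is never shown to yield a set that is still a $k$-adjacency \emph{basis} (minimum cardinality matters, since Theorem \ref{propEqualJoinK_1-H} quantifies over bases, not generators), and placing $c$ itself in $A$ does not help with $|A-N_T(c)|$ since $c\notin N_T(c)$ is only one vertex of the $k$ you need.

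The idea you are missing is the one that makes the multi-branch case easy in the paper. The paper splits on the number $\Ext(T)$ of exterior major vertices rather than on diameter: paths, spiders ($\Ext(T)=1$, where the detailed hand analysis for $k=1,2,3$ really is unavoidable), and $\Ext(T)\ge 2$. In the last case one picks two exterior major vertices $u,v$ each of terminal degree at least two; the pairs $u_1',u_2'$ and $v_1',v_2'$ of first vertices on two terminal paths then force, via $|A\cap\mathcal{C}(u_1',u_2')|\ge k$ and $|A\cap\mathcal{C}(v_1',v_2')|\ge k$, two clusters of $k$ basis vertices sitting in $\left(N_T[u_1']\cup N_T[u_2']\right)-\{u\}$ and $\left(N_T[v_1']\cup N_T[v_2']\right)-\{v\}$ respectively, and no single open neighbourhood $N_T(w)$ can meet both clusters; hence $|A-N_T(w)|\ge k$ for \emph{every} basis and every $w$, with no construction needed. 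Without this (or an equivalent) argument, all diameter-$4$ and diameter-$5$ trees with two or more branch vertices are unaccounted for, so the proposal as written does not constitute a proof.
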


\begin{proof}
For any $k\in \{1,2,3\}$ and $T\in {\cal F}_k$,   a simple inspection shows that $\adim_k(K_1+T)\ne \adim_k(T)$.  From now on, assume that   $T\not \in {\cal F}_k$, for $k\in \{1,2,3\}$, and let $\Ext(T)$ be the number of exterior major vertices of $T$. We differentiate the following three cases.\\

\noindent
Case 1. $T=P_n$. The result is a direct consequence of combining   Propositions \ref{value-dim1-fans-wheels} and \ref{value-dim1-paths-cycles} for $k=1$ and Propositions   \ref{value-fans-wheels} and \ref{value-adj-Paths} for $k>1$.\\
\\
In the following cases we shall  show that   there exists  a  $k$-adjacency basis 
 $A$ of $T$ such that $|A-N_T(v)|\ge k$, for all $v\in V(T)$. Therefore, the result follows by Theorem \ref{propEqualJoinK_1-H}.
 \\

\noindent 
Case 2. $\Ext(T)=1$. Let $u$ be the only exterior major vertex of $T$. 

We first take $k=1$. Since any two vertices adjacent to $u$ must be distinguished by at least one vertex, we have that all paths from $u$ to its terminal vertices, except at most one, contain at least one vertex in $A$. Thus,  $|A-N_T(y)|\ge 1$, for all $y\in V(T)-\{u\}$. Now we shall show that $|A-N_T(u)|\ge 1$. If $u\in A$ or $A\not \subseteq N_T(u)$, then we are done, so we suppose that for any adjacency basis $A$ of $T$,  $u\not \in A$ and $A \subseteq N_T(u)$. If there exists a leaf $v$ such that $d_T(u,v)\ge 4$, then  
the support $v'$ of $v$ satisfies ${\cal C}_T(v,v')\cap A=\emptyset$, which is a contradiction. Hence, the eccentricity of $u$ satisfies $2\le \epsilon(u)\le 3$. 
If   $w$ is a leaf of $T$ such that $d_T(u,w)=\epsilon(u)$, then the vertex $u'\in N_T(u)$ belonging to the path from $u$ to $w$ must belong to $A$ and, as a consequence  $A'=(A-\{u'\})\cup \{w\}$ is an adjacency basis of $T$, which is a contradiction. 

We now take  $k=2$. Let $A$ be a $2$-adjacency basis of $T$. Since any two vertices adjacent to $u$ must be distinguished by at least two vertices in $A$,  either all paths joining $u$ to its terminal vertices contain at least one vertex of $A$ or all but one contain at least two vertices of $A$. Thus, any vertex $y\in V(T)-\{u\}$ and any $2$-adjacency basis $A$ of $T$ satisfy that $|A-N_T(y)|\ge 2$.

If there exist two vertices  $v,v'\in V(T)$  such that $d_T(u,v)\ge 3$ and $d_T(u,v')\ge 3$, then $|A-N_T(u)|\ge 2$, as $|A\cap {\cal C}_2(v,v')|\ge 2$. On the other hand, if  there exists only one leaf $v$  such that $d_T(u,v)\ge 3$ and another leaf $w$ such that $d_T(u,w)= 2$, we have that in order to distinguish $v$ and it support as well as $w$ and its support, $|A\cap N_T[v]| \ge 1$ and $|A\cap \{u,w\}| \ge 1$ and, as a result, $|A-N_T(u)|\ge 2$. Now, since $T\not \in {\cal F}_2$ it remains to consider the case where $u$ has eccentricity two. Let $v,w$ be two leaves such that    $d_T(u,v)=d_T(u,w)=2$. If $|N_T(u)|=3$, then the set $A$ composed by $u$ and its three terminal vertices is a $2$-adjacency basis of $T$ such that $|A-N_T(u)|\ge 2$. Assume that $|N_T(u)|\ge 4$. 
In order to distinguish $v$ and its support vertex $v'$, as well as $w$ and its support vertex $w'$, any $2$-adjacency basis $A$ of $T$ must contain at least two vertices of $\{u,v,v'\}$ and at least two vertices of $\{u,w,w'\}$. If $u\notin A$, then $v,w\in A$, and as a consequence, $|A-N_T(u)|\ge 2$. 
Assume that $u\in A$. In this case, if $A-N_T[u]\ne \emptyset$, then $|A-N_T(u)|\ge 2$.
Otherwise,  $ A\subseteq N_T[u]$ and $\{u,v',w'\}\subset A$ and, as a consequence, $A'=(A-\{v'\})\cup \{v\}$ is a $2$-adjacency basis of $T$ and $|A'-N_T(u)|\ge 2$.

\noindent
Finally, suppose that there exists exactly one leaf $v$ such that $d_T(u,v)=2$. Let $v'$ be the support vertex of $v$. In this case, $V(T)-\{v'\}$ is a $2$-adjacency basis $A$ of $T$ such that $|A-N_T(u)|\ge 2$. 

\noindent
We now take $k=3$. In this case, there exist two leaves  $v,w$   such that $d_T(u,v)\ge 2$ and $d_T(u,w)\ge 2$. Since $v$ and its support vertex $v'$ must be distinguished by at least three vertices, they must belong to any $3$-adjacency basis. Analogously, $w$ and its support vertex $w'$ must belong to any $3$-adjacency basis. In general, any leaf that is not adjacent to $u$ and its support vertex belong to any $3$-adjacency basis of $T$. Moreover, there exists at most one terminal vertex $x$ adjacent to $u$. If $x$ exists, it must be distinguished from any vertex belonging to $N_T(u)-\{x\}$ by at least three vertices. Thus, they must belong to any $3$-adjacency basis. Any vertex $y$ different from $u$ and any $3$-adjacency basis $A$ of $T$ satisfy $v,v'\in A-N_T(y)$ or $w,w'\in A-N_T(y)$. If $v,v'\in A-N_T(y)$ and $w,w'\in A-N_T(y)$, then $|A-N_T(y)|\ge 3$. Otherwise, assuming without loss of generality that $v,v'\in A-N_T(y)$, there exists a terminal vertex $z$ different from $w$ such that $y\not\sim z$. Thus, again $|A-N_T(y)|\ge 3$. If $d_T(u,v)=2$, then $v,v'$ are distinguished only by $u,v,v'$, so $u$ must belong to any $3$-adjacency basis of $T$. Thus, for any $3$-adjacency basis $A$ of $T$ we have that $u,v,w\in A-N_T(u)$, and as a consequence, $|A-N_T(u)|\ge 3$. Finally, if $d_T(u,v)>2$ and $d_T(u,w)>2$, then $v,v',w,w'\in A-N_T(u)$. Hence $|A-N_T(u)|\ge 3$. 
\\

\noindent Case 3. $\Ext(T)\ge 2$. In this case, there are at least two exterior major vertices $u,v$ of $T$ having terminal degree at least two. Let $u_1,u_2$ be two terminal vertices of $u$ and $v_1,v_2$ be two terminal vertices of $v$. Let $u_1'$ and $u_2'$ be the vertices adjacent to $u$ in the paths $u-u_1$ and $u-u_2$, respectively. Likewise, let $v_1'$ and $v_2'$ be the vertices adjacent to $v$ in the paths $v-v_1$ and $v-v_2$, respectively. Notice that it is possible that $u_1=u_1'$, $u_2=u_2'$, $v_1=v_1'$ or $v_2=v_2'$. Note also that $\mathcal{C}(u_1',u_2')=(N_T[u_1']\cup N_T[u_2'])-\{u\}$ and $\mathcal{C}(v_1',v_2')=(N_T[v_1']\cup N_T[v_2'])-\{v\}$. Since for any $k$-adjacency basis $A$ of $T$ it holds that  $|\mathcal{C}(u_1',u_2')\cap A|\ge k$ and $|\mathcal{C}(v_1',v_2')\cap A|\ge k$, and for any vertex $w\in V(T)$ we have that $(A-N_T(w))\cap\mathcal{C}(u_1',u_2')=\emptyset$ or $(A-N_T(w))\cap\mathcal{C}(v_1',v_2')=\emptyset$, we conclude that $|A-N_T(w)|\ge k$. 
\end{proof}


From now on, we are going to study some cases where $\adim_k(K_1+H)>\adim_k(H)$. 
First of all, notice that by Corollary \ref{Th1HconditionNcD6}, if $H$ is a connected graph and $\adim_k(K_1+H)\ge\adim_k(H)+1$, then     $D(H)\le 5$ and, 
 by Corollary \ref{girth>4}, if $H$ has minimum degree $\delta(H)\ge 3$, then it has girth $\mathtt{g}(H)\le 4$. 
We would point out the following consequence of Theorem \ref{propEqualJoinK_1-H}. 

\begin{corollary}
If $\adim_k(K_1+H)\ge \adim_k(H)+1$, then either $H$ is connected or $H$ has exactly two connected components,  one of which is an isolated vertex. 
\end{corollary}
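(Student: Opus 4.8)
The plan is to prove the contrapositive in a form adapted to the hypothesis. Assume $\adim_k(K_1+H)\ge\adim_k(H)+1$; by Theorem \ref{propEqualJoinK_1-H} this forces that \emph{every} $k$-adjacency basis $A$ of $H$ fails the condition $|A-N_H(y)|\ge k$ for at least one vertex $y$. I would fix one such $k$-adjacency basis $A$ and a witness vertex $x\in V(H)$ with $|A-N_H(x)|<k$, i.e. $|A\cap(V(H)-N_H(x))|\le k-1$. The goal is to deduce that $H$ has at most one connected component that is not a single isolated vertex, or equivalently that $H$ is either connected or has exactly two components one of which is $K_1$.

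First I would record the structural meaning of the witness: almost all of $A$ (all but at most $k-1$ vertices) lies inside $N_H(x)$. Since every vertex of $N_H(x)$ is at distance one from $x$, it lies in the same connected component as $x$; call this component $H_0$. Thus at most $k-1$ elements of $A$ live outside $H_0$. The key step is then to argue that any component of $H$ other than $H_0$ must be trivial (a single isolated vertex). Suppose, for contradiction, that there is a second component $C\ne H_0$ containing an edge, so $C$ has two adjacent vertices $u,v$; more generally I would look at two vertices lying in a nontrivial component distinct from $H_0$. For such a pair, $\mathcal{C}_H(u,v)$ is contained in $V(C)$ (vertices in other components are at distance $2$ from both $u$ and $v$ under $d_{H,2}$, hence do not distinguish them), so distinguishing $u,v$ by $k$ elements of $A$ requires at least $k$ vertices of $A$ inside $C$, contradicting that fewer than $k$ vertices of $A$ lie outside $H_0$. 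This rules out any nontrivial second component.

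It remains to rule out two or more isolated vertices, and more than one additional component in general. If there were two distinct isolated vertices $p,q$, then $\mathcal{C}_H(p,q)=\{p,q\}$ (they are twins), so a $k$-adjacency generator would need both $p,q\in A$, giving two elements of $A$ outside $H_0$; combined with the edge-component analysis one sees that the total count of basis vertices forced outside $H_0$ collides with the bound $k-1$ whenever $k\ge 2$. Since $V(H)$ is always a $2$-adjacency generator, we may assume $k\ge 2$ throughout, so this counting is available. Assembling these cases shows that outside $H_0$ there can be at most one vertex, and that vertex must be isolated; hence $H$ is connected or has exactly two components, one of which is $K_1$.

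The main obstacle I anticipate is the bookkeeping in the case $k\ge 2$ when several small components coexist: one must carefully count how many vertices of $A$ each pair of indistinguishable or nearly-indistinguishable vertices forces outside $H_0$, and check that these forced vertices are distinct so that they genuinely sum past $k-1$. The clean way to handle this is to observe that for any pair $u,v$ lying in a component other than $H_0$, \emph{all} of $\mathcal{C}_H(u,v)$ sits outside $H_0$, so the required $k$ distinguishing basis elements are automatically among the ``outside'' vertices; this single observation collapses the edge-component case, the two-isolated-vertices case, and the several-components case into the one inequality $k\le|A\cap(V(H)-N_H(x))|\le k-1$, which is the desired contradiction.
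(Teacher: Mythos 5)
Your argument is correct, and it reaches the conclusion by a genuinely different route from the paper. The paper proves the contrapositive by a three-way case analysis on the structure of a disconnected $H$ (two nontrivial components; one nontrivial component plus at least two isolated vertices; $H\cong N_n$), and in each case exhibits enough of a basis $A$ inside each piece to verify condition (i) of Theorem \ref{propEqualJoinK_1-H}, hence equality. You instead work forward from the hypothesis: the failure of condition (i) hands you a basis $A$ and a witness $x$ with $|A-N_H(x)|\le k-1$, and your single observation --- that for any two vertices $u,v$ outside the component $H_0$ of $x$ one has $\mathcal{C}_H(u,v)\cap V(H_0)=\emptyset$, hence $\mathcal{C}_H(u,v)\cap A\subseteq A-N_H(x)$ --- immediately forces $|V(H)-V(H_0)|\le 1$. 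This is shorter, needs no case analysis, and actually yields a slightly sharper structural statement (all but at most one vertex of $H$ lies in the witness's component; note also that $x$ cannot be isolated since $|A|\ge k$, so the residual vertex, if any, is the isolated one). What the paper's version buys in exchange is explicit information about the disconnected graphs where equality holds, including the computation for $N_n$. Two cosmetic points: the intermediate paragraphs of your write-up (separate treatment of a nontrivial second component versus two isolated vertices, and the remark that one ``may assume $k\ge2$'') are unnecessary and the $k\ge2$ reduction is not actually justified as stated; but your final unified inequality $k\le|\mathcal{C}_H(u,v)\cap A|\le|A-N_H(x)|\le k-1$ works verbatim for $k=1$ as well, so these detours are harmless and can simply be deleted.
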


\begin{proof}

Let $A$ be a $k$-adjacency basis of $H$.  
We differentiate three cases for $H$.

Case 1.  There are two connected components $H_1$ and $H_2$   of $H$  such that $|V(H_1)|\ge 2$ and $|V(H_2)|\ge 2$. As for any $i\in \{1,2\}$ and  $u,v\in V(H_i)$, $|{C}_{H}(u,v)\cap A|=|{C}_{H_i}(u,v)\cap A|\ge k$ we deduce that  $|A\cap  V(H_1)|\ge k$ and $|A\cap  V(H_2)|\ge k$. Hence, if $x\in V(H_1)$, then $|A- N_H(x)|\ge |A\cap V(H_2)|\ge k$ and if $x\in V(H)-V(H_1)$, then $|A- N_H(x)|\ge |A\cap V(H_1)|\ge k$. Thus, by Theorem \ref{propEqualJoinK_1-H}, $\adim_k(K_1+H)=\adim_k(H)$. 

Case 2. There is a connected component $H_1$ of $H$ such that $|V(H_1)|\ge 2$ and there are two isolated vertices $u, v\in V(H)$. 
From ${C}_{H}(u,v)=\{u,v\}$ we conclude that $k\le 2$ and $|\{u,v\}\cap  A|\ge k$. Moreover, for any $x,y\in V(H_1)$, $x\ne y$, we have that $|{C}_{H}(x,y)\cap A|=|{C}_{H_1}(u,v)\cap A|\ge k$ and so  $|A\cap  V(H_1)|\ge k$. Hence, if $x\in V(H_1)$, then $|A- N_H(x)|\ge |\{u,v\}\cap A|\ge k$ and if $x\in V(H)-V(H_1)$, then  $|A- N_H(x)|\ge |A\cap V(H_1)|\ge k$. Thus, by Theorem \ref{propEqualJoinK_1-H}, $\adim_k(K_1+H)=\adim_k(H)$.

Case 3. $H\cong N_n$, for $n\ge 2$. In this case $k\in \{1,2\}$,     $\adim_1(K_1+N_n)= \adim_1(N_n)=n-1$ and $\adim_2(K_1+N_n)= \adim_2(N_n)=n$.

Therefore, according to the three cases above, the result follows. 
\end{proof}

By  Proposition \ref{propLowerBoundJoinK_1-H} and Theorem  \ref{propEqualJoinK_1-H}, $\adim_k(K_1+H)\ge\adim_k(H)+1$ if and only if for any $k$-adjacency basis $A$ of $H$, there exists $h\in V(H)$ such that $|A- N_H(h)|<k$.  
 Consider,  for instance, the graph $G$ showed in Figure \ref{figExamplekPermutation}. The only $2$-adjacency basis of $G$ is $B=\{v_2,v_3,v_4,v_5\}$ and $|B-N_G(v_1)|=0$, so $\adim_2(K_1+G)\ge \adim_2(G)+1=5$. It is easy to check that $A=\{v_1,v_6,v_7,v_8,v_9\}$ is a $2$-adjacency generator for $K_1+G$,  and so $\adim_2(K_1+G)=\adim_2(G)+1=5.$ We emphasize that neither $B\cup \{v_1\}$ nor $B\cup \{x\}$  are $2$-adjacency bases of $\langle x\rangle +G$. 

\begin{proposition}\label{propEqualityK_1-H-dim_1}
Let $H$ be a graph of order $n\ge 2$ and let $k\in \{1,...,\mathcal{C}(K_1+H)\}$. If for any $k$-adjacency basis $A$ of $H$, there exists $h\in V(H)$ such that $|A- N_H(h)|=k-1$ and   $|A- N_H(h')|\ge k-1$, for all $h'\in V(H)$, then  $$\adim_k(K_1+H)=\adim_k(H)+1.$$
\end{proposition}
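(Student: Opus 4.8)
The plan is to establish the two inequalities $\adim_k(K_1+H)\ge\adim_k(H)+1$ and $\adim_k(K_1+H)\le\adim_k(H)+1$ separately, leveraging Theorem \ref{propEqualJoinK_1-H} together with the known expressions for the distinguishing sets $\mathcal{C}_{K_1+H}$ recorded just before Proposition \ref{propLowerBoundJoinK_1-H}.

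For the lower bound I would argue via the contrapositive of Theorem \ref{propEqualJoinK_1-H}. The hypothesis asserts that \emph{every} $k$-adjacency basis $A$ of $H$ admits a vertex $h$ with $|A-N_H(h)|=k-1<k$; hence no $k$-adjacency basis of $H$ can satisfy $|A-N_H(y)|\ge k$ for all $y\in V(H)$. Thus statement (i) of Theorem \ref{propEqualJoinK_1-H} fails, so statement (ii) fails as well, giving $\adim_k(K_1+H)\ne\adim_k(H)$. Combining this with the inequality $\adim_k(K_1+H)\ge\adim_k(H)$ from Proposition \ref{propLowerBoundJoinK_1-H} yields $\adim_k(K_1+H)\ge\adim_k(H)+1$.

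For the upper bound I would exhibit an explicit $k$-adjacency generator of $K_1+H$ of cardinality $\adim_k(H)+1$. Let $v$ denote the vertex of $K_1$, fix any $k$-adjacency basis $A$ of $H$ (so $A\subseteq V(H)$ and $v\notin A$), and set $A'=A\cup\{v\}$. I would verify $|A'\cap\mathcal{C}_{K_1+H}(x,y)|\ge k$ for every pair $x,y\in V(K_1+H)$ by splitting into two cases. When $x,y\in V(H)$, the identity $\mathcal{C}_{K_1+H}(x,y)=\mathcal{C}_H(x,y)$ together with the fact that $A$ is a $k$-adjacency basis of $H$ gives the bound immediately, noting that $v\notin\mathcal{C}_H(x,y)$. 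When $x=v$ and $y\in V(H)$, I would use $\mathcal{C}_{K_1+H}(v,y)=\{v\}\cup(V(H)-N_H(y))$: since $v\in A'$ and $A\cap(V(H)-N_H(y))=A-N_H(y)$, we obtain $|A'\cap\mathcal{C}_{K_1+H}(v,y)|=1+|A-N_H(y)|\ge 1+(k-1)=k$, where the last inequality is precisely the hypothesis $|A-N_H(h')|\ge k-1$ for all $h'\in V(H)$. Hence $A'$ is a $k$-adjacency generator and $\adim_k(K_1+H)\le|A'|=\adim_k(H)+1$.

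The argument becomes mechanical once the right candidate generator $A\cup\{v\}$ is identified; the essential observation is that the single extra vertex $v$ contributes $+1$ to the count of distinguishing vertices for every pair of the form $(v,y)$, so that the weakened requirement $|A-N_H(y)|\ge k-1$ (rather than $\ge k$) already suffices to recover a $k$-adjacency generator. The point to handle with care is the lower bound's reliance on the ``only if'' direction of Theorem \ref{propEqualJoinK_1-H} to force strict increase: the exact equality $|A-N_H(h)|=k-1$ for some $h$ in every basis is exactly what excludes the equality case $\adim_k(K_1+H)=\adim_k(H)$.
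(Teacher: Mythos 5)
Your proof is correct and follows essentially the same route as the paper: the lower bound via the failure of condition (i) of Theorem \ref{propEqualJoinK_1-H} combined with Proposition \ref{propLowerBoundJoinK_1-H}, and the upper bound by checking that $A\cup\{v\}$ is a $k$-adjacency generator for $K_1+H$ using $|A-N_H(h')|\ge k-1$. You simply spell out the case analysis that the paper leaves implicit.
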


\begin{proof}
If   for any $k$-adjacency basis $A$ of $H$, there exists $h\in V(H)$ such that $|A- N_H(h)|=k-1$, then by Theorem \ref{propEqualJoinK_1-H}, $\adim_k(K_1+H)\ge \adim_k(H)+1.$ 

Now, let $A$ be a $k$-adjacency basis  of $H$ and let $v$ be the vertex of $K_1$. Since $|A- N_H(h')|\ge k-1$, for all $h'\in V(H)$, the set $A\cup \{v\}$, is a $k$-adjacency generator for  $K_1+H$ and, as a consequence, $\adim_k(K_1+H)\le |A\cup \{v\}|= \adim_k(H)+1.$ 
\end{proof}


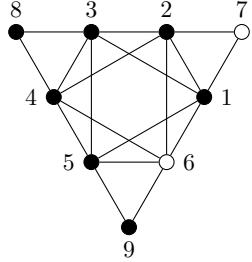
\begin{figure}[!ht]
\centering
\begin{tikzpicture}[transform shape, inner sep = .7mm]
\def\radius{1} 
\foreach \ind in {1,...,6}
{
\pgfmathparse{60*(\ind-1)};
\ifthenelse{\ind<6}
{
\node [draw=black, shape=circle, fill=black] (v\ind) at (\pgfmathresult:\radius cm) {};
}
{
\node [draw=black, shape=circle, fill=white] (v\ind) at (\pgfmathresult:\radius cm) {};
};
\ifthenelse{\ind=1\OR \ind=6}
{
\node [scale=.9] at ([xshift=.3 cm]v\ind) {$\ind$};
}
{
\ifthenelse{\ind=2\OR \ind=3}
{
\node [scale=.9] at ([yshift=.3 cm]v\ind) {$\ind$};
}
{
\node [scale=.9] at ([xshift=-.3 cm]v\ind) {$\ind$};
};
};
}
\foreach \ind in {7,...,9}
{
\pgfmathparse{30+120*(\ind-7)};
\pgfmathsetmacro\radiusE{sqrt(3)*\radius};
\ifthenelse{\ind>7}
{
\node [draw=black, shape=circle, fill=black] (v\ind) at (\pgfmathresult: \radiusE cm) {};
}
{
\node [draw=black, shape=circle, fill=white] (v\ind) at (\pgfmathresult: \radiusE cm) {};
};
\ifthenelse{\ind=7\OR \ind=8}
{
\node [scale=.9] at ([yshift=.3 cm]v\ind) {$\ind$};
}
{
\node [scale=.9] at ([yshift=-.3 cm]v\ind) {$\ind$};
};
}
\foreach \ind in {2,3,5,6,7}
\draw[black] (v1) -- (v\ind);
\foreach \ind in {3,4,6,7}
\draw[black] (v2) -- (v\ind);
\foreach \ind in {4,5,8}
\draw[black] (v3) -- (v\ind);
\foreach \ind in {5,6,8}
\draw[black] (v4) -- (v\ind);
\foreach \ind in {6,9}
\draw[black] (v5) -- (v\ind);
\draw[black] (v6) -- (v9);
\end{tikzpicture}
\caption{The set $B=\{1, 2, 3, 4, 5, 8, 9\}$ is a $3$-adjacency basis of this graph.}
\label{figFail}
\end{figure}

 The graph $H$ shown in Figure \ref{figFail} has six $3$-adjacency basis. For instance, one of them is  $B=\{1, 2, 3, 4, 5, 8, 9\}$ and the remaining ones can be found by symmetry. Notice that for any $3$-adjacency basis, say  $A$, there are two vertices $i,j$ such that $|A-N_H(i)|=2$, $|A-N_H(j)|=2$ and $|A-N_H(l)|\ge 3$, for all $l\ne i,j$. In particular, for the basis  $B$  we have  $i=3$ and $j=4$. Therefore, Proposition  \ref{propEqualityK_1-H-dim_1} leads to  $\adim_3(K_1+H)=\adim_3(H)+1=8.$

By Theorem \ref{propEqualJoinK_1-H}   and Proposition  \ref{propEqualityK_1-H-dim_1} 
we deduce the following result previously obtained  in \cite{JanOmo2012}. 

\begin{proposition}{\rm \cite{JanOmo2012}}\label{ResultOmoomiDimK1+H}
Let $H$ be graph of order $n\ge 2$. If for any adjacency basis $A$ of $H$, there exists $h\in V(H)-A$ such that $A\subseteq  N_H(h)$,  then  $$\adim_1(K_1+H)=\adim_1(H)+1,$$ otherwise,
$$\adim_1(K_1+H)=\adim_1(H).$$ 
\end{proposition}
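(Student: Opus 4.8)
The plan is to specialize the two immediately preceding results, Theorem~\ref{propEqualJoinK_1-H} and Proposition~\ref{propEqualityK_1-H-dim_1}, to the case $k=1$, and to observe that in this case both of their hypotheses collapse to the single condition appearing in the statement. The proposition then splits cleanly into an ``if'' part handled by Proposition~\ref{propEqualityK_1-H-dim_1} and an ``otherwise'' part handled by Theorem~\ref{propEqualJoinK_1-H}.

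First I would record the key translation between the notation of the preceding results and that of the statement. For a set $A\subseteq V(H)$ and a vertex $h$, the equality $|A-N_H(h)|=0$ is equivalent to $A\subseteq N_H(h)$, and since $h\notin N_H(h)$ this forces $h\in V(H)-A$. Hence the hypothesis ``for any adjacency basis $A$ of $H$ there exists $h\in V(H)-A$ with $A\subseteq N_H(h)$'' says precisely that for every adjacency basis $A$ there is a vertex $h$ with $|A-N_H(h)|=0$. I would also note the complementary fact that every $h\in A$ automatically lies in $A-N_H(h)$, so $|A-N_H(h)|\ge 1$ holds for all $h\in A$ regardless of $A$.

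For the ``if'' case I would apply Proposition~\ref{propEqualityK_1-H-dim_1} with $k=1$. The two requirements imposed there, namely $|A-N_H(h)|=k-1=0$ for some $h$ and $|A-N_H(h')|\ge k-1=0$ for all $h'$, reduce to the first alone, because $|A-N_H(h')|\ge 0$ holds trivially. Thus the hypothesis of the present proposition is exactly the hypothesis of Proposition~\ref{propEqualityK_1-H-dim_1} in the case $k=1$, and its conclusion gives $\adim_1(K_1+H)=\adim_1(H)+1$ at once.

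For the ``otherwise'' case I would invoke Theorem~\ref{propEqualJoinK_1-H} with $k=1$. If the stated hypothesis fails, then some adjacency basis $A$ admits no vertex $h\in V(H)-A$ with $A\subseteq N_H(h)$, i.e.\ $|A-N_H(h)|\ge 1$ for every $h\in V(H)-A$; combined with the observation above that $|A-N_H(h)|\ge 1$ for every $h\in A$, this yields $|A-N_H(h)|\ge 1$ for \emph{every} $h\in V(H)$. That is exactly condition~(i) of Theorem~\ref{propEqualJoinK_1-H} for $k=1$, so the stated equivalence gives $\adim_1(K_1+H)=\adim_1(H)$ (the reverse inequality being guaranteed anyway by Proposition~\ref{propLowerBoundJoinK_1-H}). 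I expect no genuine obstacle here; the only care needed is the bookkeeping that $A\subseteq N_H(h)$ forces $h\notin A$, and the recognition that the degenerate inequality $|A-N_H(h')|\ge 0$ renders the second hypothesis of Proposition~\ref{propEqualityK_1-H-dim_1} vacuous when $k=1$.
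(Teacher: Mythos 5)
Your proposal is correct and matches the paper's own derivation: the authors state the proposition precisely as a consequence of Theorem~\ref{propEqualJoinK_1-H} and Proposition~\ref{propEqualityK_1-H-dim_1} specialized to $k=1$, which is exactly your argument. Your added bookkeeping (that $A\subseteq N_H(h)$ forces $h\notin A$, and that $h\in A$ always gives $h\in A-N_H(h)$) correctly fills in the details the paper leaves implicit.
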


\begin{theorem}\label{UpperBound2}
For any  nontrivial graph  $H$,    $$\adim_2(K_1+H)\le\adim_2(H)+2.$$
\end{theorem}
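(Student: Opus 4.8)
The plan is to take a $2$-adjacency basis $A$ of $H$, adjoin the vertex of $K_1$ and at most one additional vertex, and show that the resulting set is a $2$-adjacency generator for $K_1+H$. Write $v$ for the vertex of $K_1$ and recall from the discussion preceding Proposition~\ref{propLowerBoundJoinK_1-H} that $\mathcal{C}_{K_1+H}(x,y)=\mathcal{C}_H(x,y)$ whenever $x,y\in V(H)$, while $\mathcal{C}_{K_1+H}(v,y)=\{v\}\cup(V(H)-N_H(y))$ for $y\in V(H)$. Thus a set $S\subseteq V(K_1+H)$ is a $2$-adjacency generator for $K_1+H$ precisely when (a) $S\cap V(H)$ meets every $\mathcal{C}_H(x,y)$ in at least two vertices, and (b) $|S\cap(\{v\}\cup(V(H)-N_H(y)))|\ge 2$ for every $y\in V(H)$. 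Since $A$ is a $2$-adjacency basis of $H$, any $S$ with $A\subseteq S\cap V(H)$ automatically satisfies (a); the whole task is to arrange (b) while adding at most two vertices to $A$.

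The key step I would isolate is the following counting claim: at most one vertex $h\in V(H)$ satisfies $A\subseteq N_H(h)$. Such an $h$ necessarily lies outside $A$ (since $h\notin N_H(h)$), and if two distinct vertices $h_1,h_2$ both had $A\subseteq N_H(h_i)$, then every $a\in A$ would be adjacent to both, giving $d_{H,2}(h_1,a)=1=d_{H,2}(h_2,a)$, so $a\notin\mathcal{C}_H(h_1,h_2)$. Hence $A\cap\mathcal{C}_H(h_1,h_2)=\emptyset$, contradicting that $A$ is a $2$-adjacency generator for $H$. Consequently $|A-N_H(y)|\ge 1$ for every $y\in V(H)$ except possibly one vertex $h$.

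With this in hand the construction is immediate. If no vertex $h$ with $A\subseteq N_H(h)$ exists, put $S=A\cup\{v\}$; otherwise put $S=A\cup\{v,h\}$, so that $|S|\le|A|+2=\adim_2(H)+2$ in either case. Property (a) holds as noted, since enlarging $S\cap V(H)$ beyond $A$ only enlarges the intersections $S\cap\mathcal{C}_H(x,y)$. For (b), fix $y\in V(H)$: as $v\in S$, it suffices to exhibit one vertex of $S$ in $V(H)-N_H(y)$. If $y\ne h$, then $|A-N_H(y)|\ge 1$ by the claim, and such a vertex lies in $A\subseteq S$. If $y=h$, then $h\in V(H)-N_H(h)$ and $h\in S$, so (b) holds again. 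Therefore $S$ is a $2$-adjacency generator for $K_1+H$, whence $\adim_2(K_1+H)\le|S|\le\adim_2(H)+2$.

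I expect the only genuinely delicate point to be the counting claim that at most one vertex of $H$ is adjacent to all of $A$; everything else is routine checking against the two displayed formulas for $\mathcal{C}_{K_1+H}$. A secondary thing to keep straight is that appending the extra vertex $h$ cannot destroy property (a), which is what makes the two-vertex enlargement safe.
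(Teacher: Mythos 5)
Your proposal is correct and follows essentially the same route as the paper: take a $2$-adjacency basis $A$ of $H$, observe that at most one vertex can be adjacent to all of $A$, and adjoin the $K_1$-vertex together with that exceptional vertex (if it exists). You are in fact slightly more explicit than the paper, which asserts the uniqueness claim without proof and glosses over the verification for the exceptional vertex $h$ (where one needs $h\in V(H)-N_H(h)$, exactly as you note).
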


\begin{proof}
Let $A$ be a $2$-adjacency basis of $H$ and let $u$ be the vertex of $K_1$. Notice that  there exists at most one vertex $x\in V(H)$ such that $A\subseteq N_H(x)$. Now, if $|A- N_H(v)|\ge 1$ for all $v\in V(H)$, then we define $X=A\cup \{u\}$ and, if  there exists  $x\in V(H)$ such that $A\subseteq N_H(x)$, then we define $X=A\cup \{x,u\}$. We claim that $X$ is a $2$-adjacency generator for $K_1+H$. To show this, we first note that for any  $y\in V(H)$ we have that $|\mathcal{C}_{K_1+H}(u,y)\cap X|=|((A-N_H(y))\cup\{u\})\cap X|\ge 2$. Moreover, for any $a,b\in V(H)$ we have that $\mathcal{C}_{K_1+H}(a,b)=\mathcal{C}_{H}(a,b)$. Therefore, $X$ is a $2$-adjacency generator for $K_1+H$ and, as a consequence, $\adim_2(K_1+H) \le \adim_2(H)+2$. 
\end{proof}

We would point out that if for any $2$-adjacency basis $A$ of a graph $H$, there exists a vertex $x$ such that $A\subseteq N_H(x)$, then not necessarily $\adim_2(K_1+H)=\adim_2(H)+2$. To see this, consider the graph $G$ shown Figure \ref{figExamplekPermutation}, where $\{v_2,v_3,v_4,v_5\}$ is the only $2$-adjacency basis of $G$ and $\{v_2,v_3,v_4,v_5\}\subseteq N_H(v_1)$. However, $\{v_1,v_6,v_7,v_8,v_9\}$ is a $2$-adjacency basis of $K_1+G$ and so $\adim_2(K_1+H)=\adim(H)+1$.  Now, we prove some results showing that the inequality given in Theorem \ref{UpperBound2} is tight. 

\begin{theorem}\label{Equal2-2}
Let $H$ be a nontrivial graph. If there exists a vertex $x$ of degree $\delta(x)=|V(H)|-1$ not belonging to any $2$-adjacency basis of $H$,  then $$\adim_2(K_1+H)=\adim_2(H)+2.$$
\end{theorem}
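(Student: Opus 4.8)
The upper bound $\adim_2(K_1+H)\le\adim_2(H)+2$ is already available from Theorem~\ref{UpperBound2}, so the entire task is to establish the matching lower bound $\adim_2(K_1+H)\ge\adim_2(H)+2$. The plan is to start from an \emph{arbitrary} $2$-adjacency basis $B$ of $K_1+H$ and show $|B|\ge\adim_2(H)+2$ by exhibiting two genuinely independent ``$+1$'' contributions to its size.

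The first, structural, observation is that if $u$ denotes the vertex of $K_1$, then $u$ and the hypothesized vertex $x$ of degree $|V(H)|-1$ become twins in $K_1+H$. Indeed, $x$ is adjacent in $H$ to every vertex of $V(H)-\{x\}$ and, in the join, also to $u$; likewise $u$ is adjacent to every vertex of $H$. Hence both are universal in $K_1+H$, so $N_{K_1+H}[x]=N_{K_1+H}[u]=V(K_1+H)$ and therefore $\mathcal{C}_{K_1+H}(x,u)=\{x,u\}$. Because $B$ is a $2$-adjacency generator, the requirement $|B\cap\mathcal{C}_{K_1+H}(x,u)|\ge 2$ forces $\{x,u\}\subseteq B$. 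This simultaneously pins $u$ into $B$ (one unit beyond $V(H)$) and, crucially, pins the universal vertex $x$ into $B$ as well.

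Next I would restrict attention to $H$. Writing $B_H=B\cap V(H)$, I use the identity $\mathcal{C}_{K_1+H}(h_1,h_2)=\mathcal{C}_H(h_1,h_2)$ valid for all $h_1,h_2\in V(H)$ (recorded just before Proposition~\ref{propLowerBoundJoinK_1-H}), together with $u\notin\mathcal{C}_{K_1+H}(h_1,h_2)$, to obtain $|B_H\cap\mathcal{C}_H(h_1,h_2)|=|B\cap\mathcal{C}_{K_1+H}(h_1,h_2)|\ge 2$. Thus $B_H$ is a $2$-adjacency generator for $H$, so $|B_H|\ge\adim_2(H)$. Here the hypothesis enters: by the twin argument $x\in B_H$, and if we had $|B_H|=\adim_2(H)$ then $B_H$ would be a minimum $2$-adjacency generator, i.e.\ a $2$-adjacency \emph{basis} of $H$ containing $x$, contradicting the assumption that $x$ lies in no $2$-adjacency basis of $H$. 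Therefore $|B_H|\ge\adim_2(H)+1$. Since $u\notin V(H)\supseteq B_H$, we conclude $|B|=|B_H|+1\ge\adim_2(H)+2$, and combining this with Theorem~\ref{UpperBound2} yields the desired equality.

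The main obstacle is not any single computation but the bookkeeping that makes the two increments genuinely additive: one must confirm that the vertex $u$ forced into $B$ by the twin relation is distinct from and not double-counted with the extra vertex that the ``no basis contains $x$'' hypothesis contributes inside $V(H)$. Recognizing that $\{x,u\}$ forms a twin pair in $K_1+H$ is the conceptual heart of the argument; once this is in place, the hypothesis on $x$ upgrades the trivial bound $|B_H|\ge\adim_2(H)$ to $|B_H|\ge\adim_2(H)+1$ almost for free, and the $u$ supplied by the twin relation furnishes the final unit.
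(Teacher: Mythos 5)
Your proof is correct and follows essentially the same route as the paper's: identify $x$ and the $K_1$-vertex $u$ as twins in $K_1+H$ so that both are forced into any $2$-adjacency basis $B$, observe that $B-\{u\}$ is a $2$-adjacency generator for $H$ containing $x$ and hence cannot be minimum, and combine the resulting lower bound with Theorem~\ref{UpperBound2}. The only difference is presentational: you spell out the minimality-contradiction step that the paper compresses into one sentence.
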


\begin{proof}
Let $u$ be the vertex of $K_1$ and let $x\in V(H)$ be a vertex  of degree $\delta(x)=|V(H)|-1$ not belonging to any $2$-adjacency basis of $H$. In such a case, $\mathcal{C}_{K_1+H}(x,u)=\{x,u\}$ and, as a result,  both  $x$ and $u$ must belong to any $2$-adjacency basis $X$ of $K_1+H$.  Since $X-\{u\}$ is a $2$-adjacency generator for $H$ and $x\in X-\{u\}$ we conclude that $|X-\{u\}|\ge \adim_2(H)+1$  and so  $\adim_2(K_1+H)=|X|\ge \adim_2(H)+2$. By Theorem \ref{UpperBound2} we conclude the proof.
\end{proof}

Examples of graphs satisfying the premises of Theorem \ref{Equal2-2} are the fan graphs $F_{1,n}=K_1+P_n$ and the wheel graphs $W_{1,n}=K_1+C_n$ for $n\ge 7$. For these graphs we have $\adim_2(K_1+F_{1,n})=\adim_2(F_{1,n})+2$ and $\adim_2(K_1+W_{1,n})=\adim_2(W_{1,n})+2.$

\begin{theorem}
Let $H$ be a graph having an isolated vertex $v$ and a vertex $u$ of degree $\delta(x)=|V(H)|-2$. If for any $2$-adjacency basis $B$ of $H$,  neither $u$ nor $v$ belongs to $B$, then  $$\adim_2(K_1+H)=\adim_2(H)+2.$$
\end{theorem}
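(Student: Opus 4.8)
The plan is to combine the upper bound already supplied by Theorem~\ref{UpperBound2}, namely $\adim_2(K_1+H)\le\adim_2(H)+2$, with a matching lower bound; so the task reduces to proving $\adim_2(K_1+H)\ge\adim_2(H)+2$. First I would record the decisive local computation. Write $n=|V(H)|$ and let $w$ be the vertex of $K_1$. Because $\delta(u)=n-2$ and $v$ is isolated we have $u\not\sim v$, so $N_H(u)=V(H)\setminus\{u,v\}$ and hence $V(H)\setminus N_H(u)=\{u,v\}$. Plugging this into the formula $\mathcal{C}_{K_1+H}(x,y)=\{x\}\cup(V(H)\setminus N_H(y))$ recorded just before Proposition~\ref{propLowerBoundJoinK_1-H} gives $\mathcal{C}_{K_1+H}(w,u)=\{w,u,v\}$. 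Consequently every $2$-adjacency generator $X$ of $K_1+H$ satisfies $|X\cap\{w,u,v\}|\ge 2$.

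Next I would reduce the entire lower bound to one statement about $H$. Let $X$ be a $2$-adjacency basis of $K_1+H$ and set $X_H=X\cap V(H)$; since $\mathcal{C}_{K_1+H}(a,b)=\mathcal{C}_H(a,b)$ for $a,b\in V(H)$, the set $X_H$ is a $2$-adjacency generator of $H$. The heart of the argument would be the following lemma: \emph{every $2$-adjacency generator $Y$ of $H$ satisfies $|Y|\ge\adim_2(H)+|Y\cap\{u,v\}|$.} Granting it, and using $|X_H\cap\{u,v\}|=|X\cap\{u,v\}|$ together with $|X|=|X_H|+|X\cap\{w\}|$, I would obtain
$$|X|\ \ge\ \adim_2(H)+|X\cap\{u,v\}|+|X\cap\{w\}|\ =\ \adim_2(H)+|X\cap\{w,u,v\}|\ \ge\ \adim_2(H)+2,$$
so that no separate case analysis on whether $w\in X$ is needed; the forcing $|X\cap\{w,u,v\}|\ge 2$ does all the bookkeeping.

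To prove the lemma I would first note that $Y\setminus\{u,v\}$ is a $2$-adjacency generator of $H'=H-\{u,v\}$: for any pair $a,b\in V(H')$ one has $u,v\notin\mathcal{C}_H(a,b)$ (only pairs meeting $\{u,v\}$ can be distinguished by $u$ or $v$), and $d_{H,2}$ agrees with $d_{H',2}$ on $V(H')$, so the distinguishers of $a,b$ lying in $Y\setminus\{u,v\}$ are exactly those counted in $H'$. Hence $|Y\setminus\{u,v\}|\ge\adim_2(H')$, and the lemma follows as soon as $\adim_2(H')\ge\adim_2(H)$. The reverse inequality $\adim_2(H')\le\adim_2(H)$ is immediate: by hypothesis any $2$-adjacency basis $B$ of $H$ avoids $u,v$, hence $B\subseteq V(H')$ and $B$ already generates $H'$.

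The main obstacle is therefore the remaining inequality $\adim_2(H-\{u,v\})\ge\adim_2(H)$, i.e.\ that deleting the two special vertices does not lower the $2$-adjacency dimension; this is precisely where the hypothesis ``no basis contains $u$ or $v$'' is indispensable, since without it the bound fails (for $H=(K_1+P_4)\cup K_1$ one has $\delta(u)=n-2$ and $v$ isolated, yet deleting $u,v$ drops the value from $4$ to $\adim_2(P_4)=3$). The hypothesis does impose strong structure on $H'$: every pair $(v,z)$ has $\mathcal{C}_H(v,z)=\{v\}\cup N_H[z]$ and must be $2$-resolved by a basis avoiding $v$, forcing two basis-eligible vertices into each closed $H'$-neighbourhood and in particular $\delta(H')\ge1$. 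I would finish by assuming $\adim_2(H')\le\adim_2(H)-1$, taking a $2$-adjacency basis $W$ of $H'$ of that size, and manufacturing from $W$ a $2$-adjacency basis of $H$ that contains $u$ or $v$, contradicting the hypothesis; the delicate point is controlling the pairs $(u,z)$ and $(v,z)$ when augmenting $W$, which impose $|W-N_{H'}(z)|\ge2$ and $|W\cap N_{H'}[z]|\ge2$ respectively.
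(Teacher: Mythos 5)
Your setup is sound and runs parallel to the paper's: the upper bound is Theorem~\ref{UpperBound2}, the computation $\mathcal{C}_{K_1+H}(w,u)=\{w,u,v\}$ forces $|X\cap\{w,u,v\}|\ge 2$, and your lemma that every $2$-adjacency generator $Y$ of $H$ satisfies $|Y|\ge\adim_2(H)+|Y\cap\{u,v\}|$ is exactly the bookkeeping the paper performs via its case analysis on $|X\cap\{x,v\}|$. The cases $|Y\cap\{u,v\}|\le 1$ of your lemma follow at once from the hypothesis that no basis meets $\{u,v\}$. The difficulty is entirely in the case $|Y\cap\{u,v\}|=2$, and there you do not prove anything: you reduce it to the inequality $\adim_2(H-\{u,v\})\ge\adim_2(H)$ and then explicitly leave open the ``delicate point'' of manufacturing from a small basis of $H-\{u,v\}$ a forbidden basis of $H$. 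That inequality \emph{is} the hard content of the theorem, so the proposal is a reduction to an unproven claim rather than a proof.

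Moreover, the reduction overshoots. Since $u$ is adjacent to every vertex of $H'=H-\{u,v\}$, one has $u,v\in\mathcal{C}_H(v,z)=\{v\}\cup N_H[z]$ for every $z$, so all pairs containing $v$ are already $2$-resolved by $\{u,v\}$; a direct check then shows that $Z\cup\{u,v\}$ is a $2$-adjacency generator of $H$ if and only if $Z$ is a $2$-adjacency generator of $H'$ with $Z\not\subseteq N_{H'}(z)$ for every $z\in V(H')$. Hence what your remaining case actually requires is only that no such $Z$ of cardinality $\adim_2(H)-1$ exists --- a $2$-adjacency basis of $H'$ that happens to lie inside some open neighbourhood causes no harm --- whereas you insist on the stronger, unconditional statement $\adim_2(H')\ge\adim_2(H)$, which may well be harder (or even false) in this generality. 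The paper never passes to the deleted subgraph at all: it works directly with $X-\{u\}$ as a generator of $H$ containing both special vertices and invokes the hypothesis twice. To salvage your route you should target the weaker claim just described, i.e.\ show that a $2$-adjacency generator of $H$ containing both $u$ and $v$ has at least $\adim_2(H)+2$ vertices, exploiting that a putative witness $Z$ of size $\adim_2(H)-1$ would have to satisfy $Z\cap N_{H'}[z_0]=\emptyset$ for some $z_0$ (otherwise $Z\cup\{u\}$ would be a basis of $H$ containing $u$), and deriving a contradiction from there.
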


\begin{proof}
Let $u$ be the vertex of $K_1$. Since    $\mathcal{C}_{K_1+H}(x,u)=\{x,u,v\}$,  at least two vertices of $\{x,u,v\}$ must belong to any $2$-adjacency basis $X$ of $K_1+H$. Then we have that   $x\in X-\{u\}$ or $v\in X-\{u\}$. Since $X-\{u\}$ is a $2$-adjacency generator for $H$,  we conclude that if $|X\cap \{x,v\}|=1$, then $\adim_2(K_1+H)>|X-\{u\}|\ge \adim_2(H)+1$,  whereas if  $|X\cap \{x,v\}|=2$, then   $\adim_2(K_1+H)\ge  |X-\{u\}|\ge \adim_2(H)+2$. Hence,   $\adim_2(K_1+H)=|X|\ge\adim_2(H)+2$. By Theorem \ref{UpperBound2} we conclude the proof. 
\end{proof}

For instance, we take a family of graphs ${\cal G}=\{G_1,G_2,...\}$ such that for any  $G_i\in {\cal G}$, every vertex in $V(G_i)$ belongs to a non-singleton true twin equivalence class.  Then $X=\bigcup_{G_i\in {\cal G}}V(G_i)$ is the only $2$-adjacency basis of $H=K_1\cup (K_1+\bigcup_{G_i\in {\cal G}}G_i)$. Therefore, $\adim_2(K_1+H)=\adim_2(H)+2.$

\begin{proposition}\label{Proposition-holds_conjecture}
Let $H$ be graph and $k\in\{1,\ldots,\mathcal{C}(K_1+H)\}$. If there exists a vertex $x\in V(H)$ and a $k$-adjacency basis $A$ of $H$ such that $A\subseteq N_H(x)$, then   $$\adim_k(K_1+H)\le\adim_k(H)+k.$$
\end{proposition}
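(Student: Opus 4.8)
The plan is to exhibit an explicit $k$-adjacency generator for $K_1+H$ of cardinality $\adim_k(H)+k$. Write $n=|V(H)|$, let $u$ be the vertex of $K_1$, and fix the $k$-adjacency basis $A$ together with the vertex $x\in V(H)$ satisfying $A\subseteq N_H(x)$ provided by the hypothesis. My candidate will be $X=A\cup S\cup\{u\}$, where $S$ is a set of $k-1$ vertices chosen inside $V(H)-N_H(x)$. First I would check that such an $S$ exists: since $k\le\mathcal{C}(K_1+H)\le n-\Delta(H)+1\le n-\delta(x)+1$, we obtain $|V(H)-N_H(x)|=n-\delta(x)\ge k-1$, so enough vertices are available. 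Moreover, $A\subseteq N_H(x)$ forces $S\cap A=\emptyset$, whence $|X|=\adim_k(H)+(k-1)+1=\adim_k(H)+k$, exactly the target cardinality.

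The heart of the argument is the distinguishing count for a pair $(u,y)$ with $y\in V(H)$. Recall that $\mathcal{C}_{K_1+H}(u,y)=\{u\}\cup(V(H)-N_H(y))$, so with $u\in X$ it suffices to prove $|X_H-N_H(y)|\ge k-1$, where $X_H=A\cup S$. The key step, which I expect to be the main obstacle, is the claim that $|A-N_H(y)|\ge k$ for every $y\ne x$. To establish it, I would analyse the pair $(x,y)$ inside $H$: each $a\in A$ satisfies $a\sim x$, hence $d_{H,2}(a,x)=1$, and therefore $a\in\mathcal{C}_H(x,y)$ if and only if $d_{H,2}(a,y)\ne 1$, i.e.\ if and only if $a\notin N_H(y)$. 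This yields the identity $A\cap\mathcal{C}_H(x,y)=A-N_H(y)$, and since $A$ is a $k$-adjacency basis of $H$ we conclude $|A-N_H(y)|=|A\cap\mathcal{C}_H(x,y)|\ge k$. The pleasant upshot is that $x$ is the only vertex able to produce a deficiency, which is precisely why a single set $S$ of size $k-1$ is enough to repair the construction.

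With this lemma in hand the remaining verification is routine. For $y\ne x$ we have $|X_H-N_H(y)|\ge|A-N_H(y)|\ge k>k-1$; for $y=x$, the facts $A\subseteq N_H(x)$ and $S\subseteq V(H)-N_H(x)$ give $|X_H-N_H(x)|\ge|S|=k-1$. In either case $|X\cap\mathcal{C}_{K_1+H}(u,y)|=1+|X_H-N_H(y)|\ge k$. For a pair $a,b\in V(H)$, the identity $\mathcal{C}_{K_1+H}(a,b)=\mathcal{C}_H(a,b)$ together with $A\subseteq X$ gives $|X\cap\mathcal{C}_{K_1+H}(a,b)|\ge|A\cap\mathcal{C}_H(a,b)|\ge k$. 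Thus $X$ distinguishes every pair of $K_1+H$ by at least $k$ vertices, so $X$ is a $k$-adjacency generator and $\adim_k(K_1+H)\le|X|=\adim_k(H)+k$, which completes the proof.
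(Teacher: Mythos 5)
Your proposal is correct and follows essentially the same route as the paper: the same set $X=A\cup S\cup\{u\}$ with $S$ a set of $k-1$ vertices outside $N_H(x)$, the same cardinality count, and the same key observation that $|A-N_H(y)|\ge k$ for every $y\ne x$ (which you justify in more detail than the paper, via the identity $A\cap\mathcal{C}_H(x,y)=A-N_H(y)$). No gaps.
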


\begin{proof}
Let $u$ be the vertex of $K_1$ and assume that there exists   a vertex $v_1\in V(H)$ and a $k$-adjacency basis $A$ of $H$ such that $A\subseteq N_H(v_1)$. Since $k\le |V(H)|-\Delta(H)+1$, we have that $|V(H)-  N_H(v_1)|\ge k-1$.
 With this fact in mind, we shall show that $X=A\cup \{u\}\cup A'$  is a $k$-adjacency generator  for $K_1+H$, where  $A'=\emptyset$ if  $k=1$ and    $A'=\{v_1,v_2,...,v_{k-1}\}\subset V(H)-  N_H(v_1)$ if $k\ge 2$.  
To  this end  we only need to check that $|{\cal C}_{K_1+H}(u,v)\cap X|\ge k$, for all $v\in V(H)$. 
On one hand, $|{\cal C}_{K_1+H}(u,v_1)\cap X|=|\{u\}\cup A'|= k$.
On the other hand, since $A\subseteq N_H(v_1)$, for any $v\in V(H)-\{v_1\}$ we have that $|A- N_H(v)|\ge k$ and, as a consequence, $|{\cal C}_{K_1+H}(u,v)\cap X|\ge k$. Therefore, 
$X$ is a $k$-adjacency generator for $K_1+H$ and, as a result, $\adim_k(K_1+H)\le|X|=\adim_k(H)+k.$
\end{proof}

\vspace{-0.5cm}
\begin{figure}[!ht]
\centering
\begin{tikzpicture}[transform shape, inner sep = .7mm]
\def\radius{1.5}
\foreach \ind in {1,...,9}
{
\pgfmathparse{40*(\ind-1)};
\ifthenelse{\ind=2\OR \ind=3\OR \ind=5\OR \ind=6\OR \ind=7\OR \ind=9}
{
\node [draw=black, shape=circle, fill=black] (v\ind) at (\pgfmathresult:\radius cm) {};
}
{
\node [draw=black, shape=circle, fill=white] (v\ind) at (\pgfmathresult:\radius cm) {};
};
\ifthenelse{\ind>1}
{
\pgfmathparse{int(\ind-1)};
\draw[black] (v\pgfmathresult) -- (v\ind);
}
{};
\ifthenelse{\ind=1\OR \ind=9}
{
\node [scale=.9] at ([xshift=.3 cm]v\ind) {$\ind$};
}
{
\ifthenelse{\ind=2\OR \ind=3\OR \ind=4}
{
\node [scale=.9] at ([yshift=.3 cm]v\ind) {$\ind$};
}
{
\ifthenelse{\ind=7\OR \ind=8}
{
\node [scale=.9] at ([yshift=-.3 cm]v\ind) {$\ind$};
}
{
\node [scale=.9] at ([xshift=-.3 cm]v\ind) {$\ind$};
};
};
};
}
\foreach \ind in {3,5,6,7,8,9}
{
\draw[black] (v1) -- (v\ind);
}
\foreach \ind in {6,7,8}
{
\draw[black] (v2) -- (v\ind);
}
\end{tikzpicture}
\caption{The set $A=\{2, 3, 5, 6, 7, 9\}$ is the only $3$-adjacency basis of $H$ and $A\subset N_H(1)$.
}
\label{figEjemplo3plus3}
\end{figure}
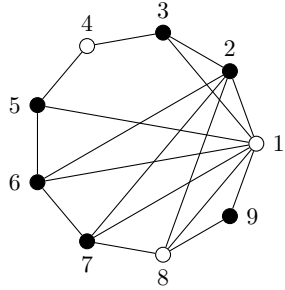

The bound above is tight. It is achieved, for instance, for the graph shown in Figure \ref{figEjemplo3plus3}. 
In this case $\adim_3(K_1 +H)=\adim_3(H)+3=9$. The set $\{2, 3, 5, 6, 7, 9\}$ is the only $3$-adjacency basis of $H$, whereas $\langle u\rangle+H$ has four $3$-adjacency bases, \textit{i.e.,} $\{1, 2, 3, 4, 5, 6, 7, 8, u\}$, $\{1, 2, 3, 4, 5, 6, 7, 9, u\}$  $ \{1, 2, 3, 4, 5, 7, 8, 9, u\}$ and $\{1, 2, 3, 4, 6, 7, 8, 9, u\}$.

\begin{conjecture} \label{ref_conjecture}
Let $H$ be graph of order $n\ge 2$ and $k\in\{1,\ldots,\mathcal{C}(K_1+H)\}$. Then $$\adim_k(K_1+H)\le\adim_k(H)+k.$$
\end{conjecture}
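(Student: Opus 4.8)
The plan is to fix a $k$-adjacency basis $A$ of $H$ and to exhibit a set $A'\subseteq V(H)$ with $|A'|\le k-1$ for which $X=A\cup A'\cup\{u\}$ (where $u$ is the vertex of $K_1$) is a $k$-adjacency generator for $K_1+H$; since then $|X|\le\adim_k(H)+k$, the bound follows. Using $\mathcal{C}_{K_1+H}(x,y)=\mathcal{C}_H(x,y)$ for $x,y\in V(H)$ and $\mathcal{C}_{K_1+H}(u,y)=\{u\}\cup(V(H)-N_H(y))$, together with the fact that $A\subseteq X$ already distinguishes every pair inside $V(H)$ by at least $k$ vertices, the whole problem collapses to the pairs $(u,y)$. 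Concretely, because $u\in X$, the requirement $|X\cap\mathcal{C}_{K_1+H}(u,y)|\ge k$ is equivalent to
\begin{equation*}
|(A\cup A')-N_H(y)|\ge k-1 \qquad \text{for every } y\in V(H).
\end{equation*}
Thus the first step is this reduction, after which the statement becomes the purely combinatorial assertion that the ``deficiencies'' of $A$ can be simultaneously repaired by adding at most $k-1$ vertices.

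The second step is to extract the structural constraint that makes such a repair plausible. Writing $b_y=|A-N_H(y)|$, I would prove the pairwise inequality $b_y+b_{y'}\ge k$ for all distinct $y,y'\in V(H)$. This follows from the inclusion $A\cap\mathcal{C}_H(y,y')\subseteq(A-N_H(y))\cup(A-N_H(y'))$, valid because any $z\in A$ adjacent to both $y$ and $y'$ satisfies $d_{H,2}(z,y)=1=d_{H,2}(z,y')$ and hence fails to distinguish the pair, combined with $|A\cap\mathcal{C}_H(y,y')|\ge k$. Calling a vertex $y$ \emph{deficient} when $b_y\le k-2$ and setting $\mathrm{def}(y)=(k-1)-b_y$, the inequality yields $\mathrm{def}(y)+\mathrm{def}(y')\le k-2$ for every pair of deficient vertices. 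In particular, the most deficient vertex is essentially unique in the strongest cases: if some $b_{y_0}\le 1$, then $y_0$ is the only deficient vertex, which recovers and slightly extends Proposition \ref{Proposition-holds_conjecture}.

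For the construction itself I would first dispatch the easy regimes: if no vertex is deficient, then $A\cup\{u\}$ already works by Theorem \ref{propEqualJoinK_1-H}; if there is a unique deficient vertex $y_0$, then the bound $k\le\mathcal{C}(K_1+H)\le n-\Delta(H)+1$ guarantees $|V(H)-N_H(y_0)|=n-\delta(y_0)\ge k-1$, so one may take $A'$ to be $\mathrm{def}(y_0)$ non-neighbours of $y_0$ lying outside $A$, exactly as in Proposition \ref{Proposition-holds_conjecture}. The remaining, genuinely hard regime is that of several deficient vertices, where one must choose a single set $A'$ of size at most $k-1$ containing at least $\mathrm{def}(y)$ non-neighbours of each deficient $y$. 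Here I would attempt a greedy/transversal argument driven by the pairwise bound $\mathrm{def}(y)+\mathrm{def}(y')\le k-2$, selecting vertices that are simultaneously non-adjacent to the currently most deficient vertices while arguing that the budget $k-1$ is never exceeded.

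The hard part will be precisely this last step. The pairwise deficiency bound controls deficiencies two at a time, but in the ``balanced'' situation, with many vertices having $b_y$ close to $k/2$ and each demanding a moderate number of non-neighbours in $A'$, it is not clear a priori that a common transversal of size only $k-1$ exists, since the non-neighbourhoods of distinct deficient vertices could be spread apart. Overcoming this seems to require either higher-order constraints on the $b_y$ (analogues of the pairwise inequality for triples or larger subsets, obtained by applying the $k$-basis property more cleverly), or the additional freedom of optimising over the choice of the $k$-adjacency basis $A$ among all such bases. It is exactly the difficulty of certifying this simultaneous covering in full generality that, in my assessment, keeps the statement at the level of a conjecture rather than a theorem.
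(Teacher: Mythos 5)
You should first be aware that the statement you were asked to prove is Conjecture~\ref{ref_conjecture}: the paper itself offers no proof, only the cases $k\in\{1,2\}$ (Proposition~\ref{ResultOmoomiDimK1+H} and Theorem~\ref{UpperBound2}), the special hypothesis of Proposition~\ref{Proposition-holds_conjecture}, and computational evidence for small graphs. So there is no paper proof to measure your attempt against, and your proposal --- which you yourself flag as incomplete --- cannot be judged ``correct''. That said, the partial steps you do carry out are sound. The reduction of the problem to the single family of pairs $(u,y)$, and hence to the covering condition $|(A\cup A')-N_H(y)|\ge k-1$ for all $y$, is exactly right (it is the content of Theorem~\ref{propEqualJoinK_1-H} specialised to the set $A\cup A'\cup\{u\}$). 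Your pairwise inequality $b_y+b_{y'}\ge k$ is correct: a vertex $z\in A$ adjacent to both $y$ and $y'$ has $d_{H,2}(z,y)=d_{H,2}(z,y')=1$ and so lies outside $\mathcal{C}_H(y,y')$, while $z=y$ or $z=y'$ automatically lies in $A-N_H(y)$ or $A-N_H(y')$; hence $A\cap\mathcal{C}_H(y,y')\subseteq (A-N_H(y))\cup(A-N_H(y'))$ and the bound follows. This cleanly recovers the known cases: for $k=2$ it forces at most one deficient vertex (giving Theorem~\ref{UpperBound2}), and the unique-deficient-vertex case, using $k\le n-\Delta(H)+1$ to guarantee enough non-neighbours, subsumes Proposition~\ref{Proposition-holds_conjecture}.

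The genuine gap is the one you name: when several vertices are deficient, the pairwise inequality does not certify a common repair set of size $k-1$. Concretely, with $k=6$ and three deficient vertices each having $b_{y_i}=3$, the pairwise constraint $\mathrm{def}(y_i)+\mathrm{def}(y_j)\le k-2$ is satisfied with equality, yet each $y_i$ demands two new non-neighbours in $A'$; if the sets $V(H)-(N_H(y_i)\cup A)$ were pairwise disjoint, any valid $A'$ would need six elements, exceeding the budget of five. Nothing you have proved excludes such a configuration, so the greedy/transversal step is not merely technically unfinished --- it is exactly where a new idea (higher-order analogues of the pairwise inequality, or optimisation over the choice of basis $A$) would be required. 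Your assessment that this is what keeps the statement at the level of a conjecture is accurate, and your framework is a reasonable and correct formalisation of the obstruction; just be explicit that what you have is a reduction plus partial results, not a proof.
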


We have shown that Conjecture \ref{ref_conjecture} is true for any graph $H$ and $k\in \{1,2\}$, and for any $H$ and $k$ satisfying the premises of Proposition \ref{Proposition-holds_conjecture}. Moreover, in order to assess the potential validity of Conjecture~\ref{ref_conjecture}, we explored the entire set of  graphs of order $n \le 11$ and minimum degree two by means of an exhaustive search algorithm. This search yielded no graph $H$ such that $\adim_k(K_1 + H)>\adim_k(H) + k$, $k \in \{3,4\}$, a fact that empirically supports our conjecture.

\subsection{The $k$-adjacency dimension of  $G +H$ for $G\not\cong K_1$ and $H\not\cong K_1$}

Two different vertices $u,v$ of $G+H$ belong  to the same twin equivalence class if and only if at least one of the following three statements hold.
\begin{enumerate}[(a)]
\item $u,v\in V(G)$ and $u,v$ belong to the same twin equivalence class of $G$.
\item $u,v\in V(H)$ and $u,v$ belong to the same twin equivalence class of $H$.
\item $u\in V(G)$, $v\in V(H)$, $N_G[u]=V(G)$ and $N_H[v]=V(H)$.
\end{enumerate}
The following two remarks are  direct consequence of Corollary \ref{remarkAdjDim2n}.

\begin{remark}\label{remarkJoinAdjGH}
Let $G$ and $H$ be two graphs of order $n_1\ge 2$ and $n_2\ge 2$, respectively. Then $\adim_2(G+H)=n_1+n_2$ if and only if one of the two following statements hold.
\begin{enumerate}[{\rm(a)}]
\item Every vertex of $G$ belongs to a non-singleton twin equivalence class of $G$ and every vertex of $H$ belongs to a non-singleton twin equivalence class of $H$.
\item $\Delta(G)=n_1-1$, $\Delta(H)=n_2-1$, every vertex  $u\in V(G)$  of degree $\delta(u)<n_1-1$ belongs to a non-singleton twin equivalence class of $G$ and every vertex $v\in V(H)$  of degree $\delta(v)<n_2-1$  belongs to a non-singleton twin equivalence class of $H$.
\end{enumerate}
\end{remark}


Let $G$ and $H$ be two   graphs of order $n_1\ge 2$ and $n_2\ge 2$, respectively. If $x,y\in V(G)$, then $ \mathcal{C}_{G+H}(x,y)=\mathcal{C}_G(x,y)$. Analogously,  if $x,y\in V(H)$, then $ \mathcal{C}_{G+H}(x,y)=\mathcal{C}_H(x,y)$. Also, if $x\in V(G)$ and $y\in V(H)$, then
$ \mathcal{C}_{G+H}(x,y)=(V(G)-N_G(x))\cup (V(H)-N_H(y))$.
 Therefore,  $$\mathcal{C}(G+H)=\displaystyle\min\{\mathcal{C}(G),\mathcal{C}(H),n_1-\Delta(G)+n_2-\Delta(H)\}.$$

\begin{theorem}\label{propLowerBoundJoinG-H}
Let  $G$ and $H$ be two nontrivial graphs. Then the following assertions hold:
\begin{enumerate}[{\rm (i)}]
\item For any $k\in\{1,\ldots,\mathcal{C}(G+H)\}$, $$ \adim_k(G+H)\ge \adim_k(G)+\adim_k(H).$$
\item For any $k\in\{1,\ldots,\min\{\mathcal{C}(H), \mathcal{C}(K_1+G)\} \}$ $$\adim_k(G+H)\le \adim_k(K_1+G)+\adim_k(H).$$
\end{enumerate}
\end{theorem}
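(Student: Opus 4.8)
The two inequalities call for opposite manoeuvres, so I would treat them separately, relying throughout on the description of $\mathcal{C}_{G+H}$ recorded just before the statement.

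For (i) the plan is to split a basis rather than build one. Let $B$ be a $k$-adjacency basis of $G+H$ and put $B_1=B\cap V(G)$ and $B_2=B\cap V(H)$. For any two vertices $x,y\in V(G)$ we have $\mathcal{C}_{G+H}(x,y)=\mathcal{C}_G(x,y)\subseteq V(G)$, so $|B_1\cap\mathcal{C}_G(x,y)|=|B\cap\mathcal{C}_{G+H}(x,y)|\ge k$; hence $B_1$ is a $k$-adjacency generator for $G$ and $|B_1|\ge\adim_k(G)$. Symmetrically $B_2$ is a $k$-adjacency generator for $H$ and $|B_2|\ge\adim_k(H)$. Since $B_1$ and $B_2$ are disjoint and partition $B$, summing gives $\adim_k(G+H)=|B|=|B_1|+|B_2|\ge\adim_k(G)+\adim_k(H)$. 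The hypothesis $k\le\mathcal{C}(G+H)\le\min\{\mathcal{C}(G),\mathcal{C}(H)\}$ guarantees that both $\adim_k(G)$ and $\adim_k(H)$ are defined.

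For (ii) I would fix a $k$-adjacency basis $B$ of $K_1+G$, denote by $w$ the vertex of $K_1$, set $B_G=B\cap V(G)$, and fix a $k$-adjacency basis $A$ of $H$. The candidate generator is essentially $S=B_G\cup A$, tweaked by one vertex when $w\in B$. The key quantitative input comes from the pair $(w,x)$ inside $K_1+G$: since $\mathcal{C}_{K_1+G}(w,x)=\{w\}\cup(V(G)-N_G(x))$ and $|B\cap\mathcal{C}_{K_1+G}(w,x)|\ge k$, we get $|B_G-N_G(x)|\ge k-1$ for every $x\in V(G)$, improving to $|B_G-N_G(x)|\ge k$ when $w\notin B$. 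The second ingredient is a uniqueness observation: at most one vertex $y^\ast\in V(H)$ can satisfy $A\subseteq N_H(y^\ast)$, for if $y_1\ne y_2$ both had this property then every $z\in A$ would be adjacent to both, forcing $\mathcal{C}_H(y_1,y_2)\cap A=\emptyset$ and contradicting that $A$ is a $k$-adjacency generator for $H$. The same remark shows $y^\ast\notin A$.

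With these in hand I would set $S=B_G\cup A$ when $w\notin B$, and $S=B_G\cup A\cup\{y^\ast\}$ (omitting $y^\ast$ if it does not exist) when $w\in B$; in every case $|S|\le|B|+|A|=\adim_k(K_1+G)+\adim_k(H)$. Verifying that $S$ is a $k$-adjacency generator splits into the three pair types. Pairs inside $V(G)$ are covered because $\mathcal{C}_{G+H}(x,y)=\mathcal{C}_G(x,y)$ and $|B_G\cap\mathcal{C}_G(x,y)|=|B\cap\mathcal{C}_{K_1+G}(x,y)|\ge k$; pairs inside $V(H)$ are covered directly by $A$. The delicate case, which I expect to be the main obstacle, is a mixed pair $x\in V(G)$, $y\in V(H)$, where $\mathcal{C}_{G+H}(x,y)=(V(G)-N_G(x))\cup(V(H)-N_H(y))$ and one must show $|B_G-N_G(x)|+|A-N_H(y)|$, suitably augmented, is at least $k$. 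If $y\ne y^\ast$ then $|A-N_H(y)|\ge1$ combines with $|B_G-N_G(x)|\ge k-1$; if $y=y^\ast$ the added vertex $y^\ast$, which lies in $V(H)-N_H(y^\ast)\subseteq\mathcal{C}_{G+H}(x,y^\ast)$ and is genuinely new (being in neither $B_G$ nor $A$), supplies the missing unit; and when $w\notin B$ the stronger bound $|B_G-N_G(x)|\ge k$ makes the augmentation unnecessary. This case-checking, together with confirming the cardinality bookkeeping for the added vertex, is where the real work lies.
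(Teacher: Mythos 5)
Your proposal is correct and follows essentially the same route as the paper: part (i) by splitting a basis of $G+H$ into its $V(G)$ and $V(H)$ parts, and part (ii) by combining a basis of $K_1+G$ with a basis of $H$, using the pair $(w,x)$ to get $|B_G-N_G(x)|\ge k-1$, and adding the (unique, non-basis) vertex $y^\ast$ with $A\subseteq N_H(y^\ast)$ to cover the mixed pairs. The only difference is organizational: the paper distinguishes cases according to whether \emph{every} basis of $K_1+G$ contains $w$ and whether \emph{every} basis of $H$ is dominated by some vertex, whereas you fix one basis of each type and branch on its properties, which is slightly cleaner and loses nothing.
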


\begin{proof}
First we proceed  to deduce the lower bound. Let $A$ be a $k$-adjacency basis of $G+H$, $A_G=A\cap V(G)$, $A_H=A\cap V(H)$ and let $x,y\in V(G)$ be two different vertices. Notice that $A_G\ne \emptyset$ and $A_H\ne \emptyset$, as $n_1\ge 2$ and $n_2\ge 2$. Now, since $\mathcal{C}_{G+H}(x,y)=\mathcal{C}_{G}(x,y)$, it follows that $|A_G\cap\mathcal{C}_{G}(x,y)|=|A\cap\mathcal{C}_{G+H}(x,y)|\ge k$, and as a consequence, $A_G$ is a $k$-adjacency generator for $G$. By analogy  we deduce that $A_H$ is a $k$-adjacency generator for $H$. Therefore, $\adim_k(G+H)=|A|=|A_G|+|A_H|\ge\adim_k(G)+\adim_k(H)$.

To obtain the upper bound, first we suppose that there exists a $k$-adjacency basis $U$ of $K_1+G$ such that the vertex of $K_1$ does not belong to $U$. We claim that for any $k$-adjacency basis $B$ of $H$ the set   $X=U\cup B$ is a $k$-adjacency generator for $G+H$. To see this we take two different vertices $a,b\in V(G+H)$. If $a,b\in V(G)$, then $|\mathcal{C}_{G+H}(a,b)\cap X|=|\mathcal{C}_{K_1+G}(a,b)\cap U|\ge k$. If $a,b\in V(H)$, then $|\mathcal{C}_{G+H}(a,b)\cap X|=|\mathcal{C}_{H}(a,b)\cap B|\ge k$. Now, assume that $a\in V(G)$ and $b\in V(H)$. Since $U$ is a $k$-adjacency generator for $\langle b \rangle+G$, we have that $|\mathcal{C}_{\langle b \rangle+G}(a,b)\cap U|\ge k$. Hence, $|\mathcal{C}_{G+H}(a,b)\cap X| = |\mathcal{C}_{\langle b \rangle+G}(a,b)\cap U|\ge k$. Therefore, $X$ is a $k$-adjacency generator for $G+H$ and, as a consequence, $\adim_k(G+H)\le |X|= |U|+|B|= \adim_k(K_1+G)+\adim_k(H)$.

Suppose from now on that the vertex $u$ of $K_1$ belongs to any $k$-adjacency basis $U$ of $K_1+G$. We differentiate two cases:

Case 1. For any $k$-adjacency basis $B$ of $H$, there exists a vertex $x$ such that $B \subseteq N_H(x)$. We claim that $X=U'\cup (B\cup\{x\})$ is a $k$-adjacency generator for $G+H$, where $U'=U-\{u\}$. To see this we take two different vertices $a,b\in V(G+H)$. Notice that since $B$ is $k$-adjacency basis of $H$, there exists exactly one vertex $x\in V(H)$ such that $B\subseteq N_H(x)$ and for any $y\in V(H)-\{x\}$ it holds $|B-N_H(y)|\ge k$. If $a,b\in V(G)$, then $|\mathcal{C}_{G+H}(a,b)\cap X|=|\mathcal{C}_{K_1+G}(a,b)\cap U'|=|\mathcal{C}_{K_1+G}(a,b)\cap U|\ge k$. If $a,b\in V(H)$, then $|\mathcal{C}_{G+H}(a,b)\cap X|=|\mathcal{C}_{H}(a,b)\cap (B\cup\{x\})|\ge k$. Now, assume that $a\in V(G)$ and $b\in V(H)$. Since $U'\cup \{b\}$ is a $k$-adjacency basis of $\langle b \rangle+G$, we have that $|\mathcal{C}_{\langle b \rangle+G}(a,b)\cap U'|\ge k-1$. Furthermore, $|\mathcal{C}_{\langle a \rangle+H}(a,b)\cap (B\cup\{x\})|\ge 1$. Hence, $|\mathcal{C}_{G+H}(a,b)\cap X| = |\mathcal{C}_{\langle b \rangle+G}(a,b)\cap U'|+|\mathcal{C}_{\langle a \rangle+H}(a,b)\cap (B\cup\{x\})|\ge k$. Therefore, $X$ is a $k$-adjacency generator for $G+H$ and, as a consequence, $\adim_k(G+H)\le |X|= |U'|+|B\cup\{x\}|= (\adim_k(K_1+G)-1)+(\adim_k(H)+1)=\adim_k(K_1+G)+\adim_k(H)$.

Case 2. There exists a $k$-adjacency basis $B'$ of $H$ such that $|B'- N_H(h')|\ge 1$, for all $h'\in V(H)$. We take $X=U'\cup B'$ and we proceed as above to show  that $X$ is a $k$-adjacency generator for $G+H$. As above, for $a,b\in V(G)$ or $a,b\in V(H)$ we deduce that $|\mathcal{C}_{G+H}(a,b)\cap X|\ge k$. Now, for $a\in V(G)$ and $b\in V(H)$ we have $|\mathcal{C}_{\langle b \rangle+G}(a,b)\cap U'|\ge k-1$ and $|\mathcal{C}_{\langle a \rangle+H}(a,b)\cap B'|\ge 1$. Hence, $|\mathcal{C}_{G+H}(a,b)\cap X| = |\mathcal{C}_{\langle b \rangle+G}(a,b)\cap U'|+|\mathcal{C}_{\langle a \rangle+H}(a,b)\cap B|\ge k$ and, as a consequence, $\adim_k(G+H)\le |X|= |U'|+|B'|= (\adim_k(K_1+G)-1)+\adim_k(H)\le\adim_k(K_1+G)+\adim_k(H)$.
\end{proof}

By Proposition \ref{ResultOmoomiDimK1+H} and Theorem \ref{propLowerBoundJoinG-H}   we obtain the following result. 

\begin{proposition}
Let  $G$ and $H$ be two  non-trivial  graphs. If for any adjacency basis $A$ of $G$, there exists $g\in V(G)$ such that $A\subseteq  N_G(g)$ and for any adjacency basis $B$ of $H$, there exists $h\in V(H)$ such that $B\subseteq  N_H(h)$,  then  $$\adim_1(G+H)=\adim_1(G)+\adim_1(H)+1$$ Otherwise,
$$\adim_1(G+H)=\adim_1(G)+\adim_1(H).$$ 
\end{proposition}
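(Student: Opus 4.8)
The plan is to combine the general bounds from Theorem~\ref{propLowerBoundJoinG-H} with the characterization of the $1$-adjacency dimension of $K_1+H$ given in Proposition~\ref{ResultOmoomiDimK1+H}. The key observation is that Theorem~\ref{propLowerBoundJoinG-H}(i) already supplies the lower bound $\adim_1(G+H)\ge\adim_1(G)+\adim_1(H)$, so the entire problem reduces to establishing the correct additive constant ($0$ or $1$) depending on the neighbourhood-covering condition. Since the hypothesis of this proposition is symmetric in $G$ and $H$ and both concern adjacency bases being contained in some open neighbourhood, I expect Proposition~\ref{ResultOmoomiDimK1+H} to be the workhorse: it tells us exactly when $\adim_1(K_1+G)=\adim_1(G)+1$ versus $\adim_1(K_1+G)=\adim_1(G)$.

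First I would dispose of the ``Otherwise'' case, which is the easier direction. If, say, $G$ admits an adjacency basis $A$ with $A\not\subseteq N_G(g)$ for every $g\in V(G)$, then by Proposition~\ref{ResultOmoomiDimK1+H} we have $\adim_1(K_1+G)=\adim_1(G)$, and moreover this basis $A$ does not sit inside any neighbourhood. The plan is to feed this into the upper bound of Theorem~\ref{propLowerBoundJoinG-H}(ii), namely $\adim_1(G+H)\le\adim_1(K_1+G)+\adim_1(H)=\adim_1(G)+\adim_1(H)$. Combined with the lower bound from part (i), this pins down $\adim_1(G+H)=\adim_1(G)+\adim_1(H)$ exactly. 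The symmetric hypothesis on $H$ is handled identically by exchanging the roles of $G$ and $H$, so only one of the two ``good'' bases is needed to force equality.

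For the first (``$+1$'') case I would argue the matching lower bound directly. Assume every adjacency basis $A$ of $G$ lies inside some $N_G(g)$ and every adjacency basis $B$ of $H$ lies inside some $N_H(h)$. By Proposition~\ref{ResultOmoomiDimK1+H} this gives $\adim_1(K_1+G)=\adim_1(G)+1$. The goal is to show $\adim_1(G+H)\ge\adim_1(G)+\adim_1(H)+1$. Take a $1$-adjacency basis $S$ of $G+H$ and split it as $S_G=S\cap V(G)$, $S_H=S\cap V(H)$; from the proof of the lower bound in Theorem~\ref{propLowerBoundJoinG-H}(i), $S_G$ is an adjacency generator for $G$ and $S_H$ one for $H$. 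I would then argue that if $|S_G|=\adim_1(G)$ and $|S_H|=\adim_1(H)$ simultaneously (the only way to avoid the extra vertex), both $S_G$ and $S_H$ are \emph{bases}, hence by hypothesis $S_G\subseteq N_G(g)$ and $S_H\subseteq N_H(h)$ for some $g,h$; but then the cross pair $g\in V(G)$, $h\in V(H)$ satisfies $\mathcal{C}_{G+H}(g,h)\cap S=\emptyset$, since $\mathcal{C}_{G+H}(g,h)=(V(G)-N_G(g))\cup(V(H)-N_H(h))$ misses all of $S_G$ and $S_H$. This contradicts $S$ being a $1$-adjacency generator, forcing $|S|\ge\adim_1(G)+\adim_1(H)+1$.

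The matching upper bound in the ``$+1$'' case then follows from Theorem~\ref{propLowerBoundJoinG-H}(ii) together with $\adim_1(K_1+G)=\adim_1(G)+1$, yielding $\adim_1(G+H)\le\adim_1(G)+\adim_1(H)+1$. The step I expect to require the most care is verifying that when $S$ achieves both minimum sizes the induced sets really are bases of $G$ and $H$ (not merely generators of the right cardinality but possibly pathological), and that the chosen cross pair $g,h$ is genuinely distinguished by no element of $S$; the neighbourhood description $\mathcal{C}_{G+H}(g,h)=(V(G)-N_G(g))\cup(V(H)-N_H(h))$ recorded just before Theorem~\ref{propLowerBoundJoinG-H} is the crucial algebraic fact making this contradiction work, so I would state it explicitly before invoking it.
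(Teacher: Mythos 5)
Your proposal is correct and follows the same route the paper intends: the paper gives no detailed proof, merely citing Proposition \ref{ResultOmoomiDimK1+H} and Theorem \ref{propLowerBoundJoinG-H}, which are exactly the tools you combine. Your explicit cross-pair argument for the lower bound in the ``$+1$'' case (using $\mathcal{C}_{G+H}(g,h)=(V(G)-N_G(g))\cup(V(H)-N_H(h))$ together with $g\notin N_G(g)$ and $h\notin N_H(h)$) correctly supplies the one step that the two cited results do not literally cover.
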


\begin{corollary}\label{FromAdimK1+HtoAdimG+H}
Let  $G$ and $H$ be two nontrivial graphs and $k\in\{1,\ldots,\mathcal{C}(G+H)\}$. If $\adim_k(K_1+G)=\adim_k(G)$, then  $$\adim_k(G+H)=\adim_k(G)+\adim_k(H).$$
\end{corollary}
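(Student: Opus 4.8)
The plan is to obtain the equality by squeezing $\adim_k(G+H)$ between matching lower and upper bounds, both of which are already supplied by Theorem \ref{propLowerBoundJoinG-H}, and then to eliminate the join $K_1+G$ using the hypothesis $\adim_k(K_1+G)=\adim_k(G)$. The lower bound is immediate: for every $k\in\{1,\ldots,\mathcal{C}(G+H)\}$, part (i) of Theorem \ref{propLowerBoundJoinG-H} gives
$$\adim_k(G+H)\ge \adim_k(G)+\adim_k(H),$$
so this direction requires no further argument.

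For the reverse inequality I would invoke part (ii) of the same theorem, which reads $\adim_k(G+H)\le \adim_k(K_1+G)+\adim_k(H)$, and then substitute the hypothesis to obtain $\adim_k(G+H)\le \adim_k(G)+\adim_k(H)$. The one point that needs care is that part (ii) is stated only for $k\in\{1,\ldots,\min\{\mathcal{C}(H),\mathcal{C}(K_1+G)\}\}$, whereas the corollary quantifies over the a priori larger range $k\in\{1,\ldots,\mathcal{C}(G+H)\}$. That $k\le\mathcal{C}(H)$ is free, since the formula $\mathcal{C}(G+H)=\min\{\mathcal{C}(G),\mathcal{C}(H),(n_1-\Delta(G))+(n_2-\Delta(H))\}$ already forces $\mathcal{C}(G+H)\le\mathcal{C}(H)$. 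The condition $k\le\mathcal{C}(K_1+G)$ I would derive from the hypothesis itself: writing $\adim_k(K_1+G)=\adim_k(G)$ presupposes that a $k$-adjacency basis of $K_1+G$ exists, i.e.\ that $K_1+G$ is at least $k$-adjacency dimensional, which by Theorem \ref{theokadjacency} is exactly the statement $k\le\mathcal{C}(K_1+G)$. With $k$ now confirmed to lie in the range of part (ii), applying it and using the hypothesis yields
$$\adim_k(G+H)\le \adim_k(K_1+G)+\adim_k(H)=\adim_k(G)+\adim_k(H),$$
and combining this with the lower bound gives the asserted equality.

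The only genuine subtlety, and the step I would flag as the main obstacle, is precisely this range-compatibility check. It matters because $\mathcal{C}(G+H)$ need \emph{not} be bounded above by $\mathcal{C}(K_1+G)$ in general: for example, if $G$ has a universal vertex (so $n_1-\Delta(G)+1=2$ and $\mathcal{C}(K_1+G)$ may equal $2$) while $H$ satisfies $\Delta(H)<n_2-1$ and $\mathcal{C}(G),\mathcal{C}(H)\ge 3$, then the third term of $\mathcal{C}(G+H)$ is at least $3$, so $\mathcal{C}(G+H)>\mathcal{C}(K_1+G)$. The resolution is exactly the observation above: for the values of $k$ outside $\{1,\ldots,\mathcal{C}(K_1+G)\}$ the quantity $\adim_k(K_1+G)$ is undefined, so the hypothesis is only meaningful when $k\le\mathcal{C}(K_1+G)$, and under that implicit restriction we stay inside the admissible range of part (ii). Once this is spelled out, the remainder of the proof is the routine combination of the two bounds.
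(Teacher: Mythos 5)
Your proposal is correct and follows exactly the route the paper intends: the corollary is stated as an immediate consequence of Theorem \ref{propLowerBoundJoinG-H}, obtained by combining the lower bound of part (i) with the upper bound of part (ii) and substituting $\adim_k(K_1+G)=\adim_k(G)$. Your additional check that $k$ lies in the admissible range of part (ii) -- noting that $k\le\mathcal{C}(H)$ follows from $\mathcal{C}(G+H)\le\mathcal{C}(H)$ and that $k\le\mathcal{C}(K_1+G)$ is implicit in the hypothesis being well-defined -- is a point the paper leaves unstated, but it does not change the argument.
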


In the previous section we showed that there are several classes of graphs where $\adim_k(K_1+G)=\adim_k(G)$. This is the case, for instance, of graphs of diameter $D(G) \ge 6$, or  $G\in \{P_n,C_n\}$, $n\ge 7$, or  graphs  of girth $\mathtt{g}(G)\ge 5$ and minimum degree $\delta(G)\ge 3$.  Hence, for any of these graphs, any nontrivial graph $H$, and any $k\in\{1,\ldots,\min\{\mathcal{C}(H), \mathcal{C}(K_1+G)\} \}$  we have that $\adim_k(G+H)=\adim_k(G)+\adim_k(H).$

\begin{theorem}\label{propEqualJoinG-H}
Let $G$ and $H$ be two nontrivial graphs.  Then the following assertions are equivalent: 

\begin{enumerate}[{\rm (i)}]
\item There exists a $k$-adjacency basis $A_G$ of $G$ and  a $k$-adjacency basis $A_H$ of $H$ such that $|(A_G-N_G(x))\cup (A_H-N_H(y))|\ge k$, for all  $x\in V(G)$ and $y\in V(H)$.
\item $\adim_k(G+H)=\adim_k(G)+\adim_k(H).$
\end{enumerate}
\end{theorem}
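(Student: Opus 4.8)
The plan is to use the lower bound already secured in Theorem \ref{propLowerBoundJoinG-H}(i), namely $\adim_k(G+H)\ge\adim_k(G)+\adim_k(H)$, which holds unconditionally. Consequently condition (ii) is equivalent to the \emph{reverse} inequality $\adim_k(G+H)\le\adim_k(G)+\adim_k(H)$, and the whole task reduces to showing that condition (i) is precisely what is needed to realize this upper bound. The computational engine throughout is the three-case description of the distinguishing sets stated just before the theorem: $\mathcal{C}_{G+H}(x,y)=\mathcal{C}_G(x,y)$ when $x,y\in V(G)$; $\mathcal{C}_{G+H}(x,y)=\mathcal{C}_H(x,y)$ when $x,y\in V(H)$; and $\mathcal{C}_{G+H}(x,y)=(V(G)-N_G(x))\cup(V(H)-N_H(y))$ when $x\in V(G)$, $y\in V(H)$. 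The guiding observation is that $A_G\subseteq V(G)$ and $A_H\subseteq V(H)$ meet these sets in disjoint pieces, so for mixed pairs the relevant cardinalities simply add.

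For the implication (i)$\Rightarrow$(ii), I would take the $k$-adjacency bases $A_G$ of $G$ and $A_H$ of $H$ furnished by (i) and set $A=A_G\cup A_H$, then verify that $A$ is a $k$-adjacency generator for $G+H$ by checking the three cases. For $x,y\in V(G)$ we have $A\cap\mathcal{C}_{G+H}(x,y)=A_G\cap\mathcal{C}_G(x,y)$ (since $\mathcal{C}_G(x,y)\subseteq V(G)$), which has at least $k$ elements because $A_G$ is a $k$-adjacency generator for $G$; the case $x,y\in V(H)$ is symmetric. For the mixed case $x\in V(G)$, $y\in V(H)$, disjointness gives $A\cap\mathcal{C}_{G+H}(x,y)=(A_G-N_G(x))\cup(A_H-N_H(y))$, whose cardinality is $\ge k$ exactly by hypothesis (i). Thus $A$ is a $k$-adjacency generator with $|A|=|A_G|+|A_H|=\adim_k(G)+\adim_k(H)$, giving the desired upper bound, and equality follows.

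For (ii)$\Rightarrow$(i), I would take an arbitrary $k$-adjacency basis $A$ of $G+H$ and put $A_G=A\cap V(G)$, $A_H=A\cap V(H)$. Repeating the argument from the lower-bound proof, $A_G$ is a $k$-adjacency generator for $G$ and $A_H$ for $H$, so $|A_G|\ge\adim_k(G)$ and $|A_H|\ge\adim_k(H)$. Since $|A_G|+|A_H|=|A|=\adim_k(G+H)=\adim_k(G)+\adim_k(H)$ by (ii), both inequalities must be equalities, forcing $A_G$ and $A_H$ to be $k$-adjacency \emph{bases} of $G$ and $H$ respectively. Finally, applying the $k$-adjacency generator property of $A$ to each mixed pair $x\in V(G)$, $y\in V(H)$ and using the same disjoint decomposition yields $|(A_G-N_G(x))\cup(A_H-N_H(y))|=|A\cap\mathcal{C}_{G+H}(x,y)|\ge k$, which is exactly condition (i).

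No single step is genuinely difficult; the proof is essentially a bookkeeping argument driven by the three-case formula for $\mathcal{C}_{G+H}$. The one place demanding care—the part I would flag as the main subtlety rather than a deep obstacle—is the mixed-pair analysis: one must correctly identify $A\cap\mathcal{C}_{G+H}(x,y)$ as the disjoint union $(A_G-N_G(x))\cup(A_H-N_H(y))$ and exploit that $A_G$, $A_H$ lie in the two disjoint vertex sets, so that counting in $G+H$ factors cleanly through the two coordinates. In the converse direction, the counting argument that upgrades the two projections from mere generators to bases is what makes the equivalence an exact characterization rather than a one-sided bound.
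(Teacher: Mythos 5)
Your proposal is correct and follows essentially the same route as the paper's own proof: both directions rest on the lower bound from Theorem \ref{propLowerBoundJoinG-H}(i), the construction $A=A_G\cup A_H$ with the three-case analysis of $\mathcal{C}_{G+H}$ for the forward implication, and the projection-plus-counting argument (upgrading $A\cap V(G)$ and $A\cap V(H)$ from generators to bases, then testing mixed pairs) for the converse. No gaps.
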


\begin{proof}
Let $A_G$ be a $k$-adjacency basis  of $G$ and  and let $A_H$ be a $k$-adjacency basis  of $H$ such that $|(A_G-N_G(x))\cup (A_H-N_H(y))|\ge k$, for all  $x\in V(G)$ and $y\in V(H)$.   By Theorem \ref{propLowerBoundJoinG-H},  $\adim_k(G+H)\ge\adim_k(G)+\adim_k(H)$. It remains to prove that $\adim_k(G+H)\le\adim_k(G)+\adim_k(H)$.  We will prove that $A=A_G\cup A_H$ is a $k$-adjacency generator for $G+H$. We differentiate three cases for two vertices $x,y\in V(G+H)$. If $x,y\in V(G)$, then the fact that $A_G$ is a $k$-adjacency basis of $G$ leads to $k\le|A_G\cap\mathcal{C}_{G}(x,y)|=|A\cap\mathcal{C}_{G+H}(x,y)|$. Analogously we deduce the case $x,y\in V(H)$. If $x\in V(G)$ and $y\in V(H)$, then the fact that $\mathcal{C}_{G+H}(x,y)=(V(G)-N_G(x))\cup(V(H)-N_H(y))$ and $|(A_G-N_G(x))\cup(A_H-N_H(y))|\ge k$ leads to $|A\cap\mathcal{C}_{G+H}(x,y)|\ge k$. Therefore, $A$ is a $k$-adjacency generator for $G+H$, as a consequence, $|A|=|A_G|+|A_H|=\adim_k(G)+\adim_k(H)\ge\adim_k(G+H)$. 

 On the other hand, let $B$ be a $k$-adjacency basis of $G+H$ such that $|B|=\adim_k(G)+\adim_k(H)$ and let $B_G=B\cap V(G)$ and $B_H=B\cap V(H)$. Since for any $g_1,g_2\in V(G)$ and $h\in V(H)$, $h\not\in \mathcal{C}_{G+H}(g_1,g_2)$, we conclude that $B_G$ is a $k$-adjacency generator for $G$ and, by analogy, $B_H$ is a $k$-adjacency generator for $H$. Thus, $|B_G|\ge \adim_k(G)$, $|B_H|\ge \adim_k(H)$ and  $|B_G|+|B_H|=|B|=\adim_k(G)+ \adim_k(H)$. Hence, $|B_G|=\adim_k(G)$, $|B_H|=\adim_k(H)$ and, as a consequence, $B_G$ and $B_H$ are $k$-adjacency bases of $G$ and $H$, respectively. If there exists $g\in V(G)$ and $h\in V(H)$ such that $|(B_G-N_G(g))\cup (B_H-N_H(h))|< k$, then  
$|B\cap \mathcal{C}_{G+H}(g,h)|=|(B_G-N_G(g))\cup (B_H-N_H(h))|<k$, which is a contradiction. Therefore, the result follows.
\end{proof}

We would point out the following particular cases of the previous result\footnote{Notice that for $n\ge 7$ and $n'\ge 6$, this result can be derived from Corollary \ref{FromAdimK1+HtoAdimG+H}.}.

\begin{corollary}
Let $C_n$ be a cycle graph of order $n\ge 5$ and $P_{n'}$ a path graph of order $n'\ge 4$.
If $G\in \{K_t+C_n,N_t+C_n\}$, then
$$\adim_1(G)=\left\lfloor\frac{2n+2}{5}\right\rfloor+t-1\; and \; \adim_2(G)=\left\lceil\frac{n}{2}\right\rceil+t. $$
 If $G\in \{K_t+P_{n'},N_t+P_{n'}\}$, then 
$$\adim_1(G)=\left\lfloor\frac{2n'+2}{5}\right\rfloor+t-1 \; and \;\adim_2(G)=\left\lceil\frac{n'+1}{2}\right\rceil+t.$$
\end{corollary}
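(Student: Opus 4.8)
The plan is to reduce the four formulas to a single additivity identity and then read off the values of the factors. Concretely, I would prove
$$\adim_k(J+F)=\adim_k(J)+\adim_k(F),\qquad J\in\{K_t,N_t\},\ F\in\{C_n,P_{n'}\},\ k\in\{1,2\},$$
and then substitute the building blocks. For the factor $J$ (with $t\ge 2$), note that in both $K_t$ and $N_t$ every pair of vertices is a twin pair, so $\mathcal{C}_J(x,y)=\{x,y\}$ for all $x,y$; hence a $1$-adjacency basis is any set of $t-1$ vertices, giving $\adim_1(K_t)=\adim_1(N_t)=t-1$, and Corollary~\ref{remarkAdjDim2n} gives $\adim_2(K_t)=\adim_2(N_t)=t$. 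The factor $F$ is supplied directly by Propositions~\ref{value-dim1-paths-cycles}, \ref{value-adj-Paths} and \ref{value-adj-Cycles}. Summing these reproduces exactly the right-hand sides in the statement, so the corollary reduces to the displayed identity.

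The lower bound $\adim_k(J+F)\ge\adim_k(J)+\adim_k(F)$ is immediate from Theorem~\ref{propLowerBoundJoinG-H}(i). For the matching upper bound I would use the characterization of equality in Theorem~\ref{propEqualJoinG-H}: it is enough to produce a $k$-adjacency basis $A_J$ of $J$ and a $k$-adjacency basis $A_F$ of $F$ with $|(A_J-N_J(x))\cup(A_F-N_F(y))|\ge k$ for all $x\in V(J)$ and $y\in V(F)$. Since the two sets in the union lie in disjoint vertex sets, their cardinalities add, so the verification splits into controlling $|A_J-N_J(x)|$ and $|A_F-N_F(y)|$ separately.

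The choice $J=N_t$ is immediate: because $N_{N_t}(x)=\emptyset$ we have $A_J-N_J(x)=A_J$, of size $\adim_k(N_t)\ge k$, so the threshold is already met from the $N_t$-side for every pair $x,y$ and Theorem~\ref{propEqualJoinG-H} applies with any choice of $A_F$. The choice $J=K_t$ is more delicate: since $N_{K_t}(x)=V(K_t)-\{x\}$, the set $A_J-N_J(x)$ is at most a single vertex, namely $\{x\}$ when $x\in A_J$ and $\emptyset$ for the unique $x\notin A_J$ (this empty case occurs only for $k=1$, where $A_J=V(K_t)-\{z\}$; for $k=2$ one has $A_J=V(K_t)$ and $A_J-N_J(x)=\{x\}$ always). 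Consequently the identity for $J=K_t$ reduces to the existence of a $k$-adjacency basis $A_F$ of $F$ that meets no single neighbourhood, i.e. $|A_F-N_F(y)|\ge 1$ for every $y\in V(F)$. For $k=2$ this is automatic, since $|A_F|=\adim_2(F)\ge 3>|N_F(y)|$ in every cycle $C_n$ ($n\ge5$) and path $P_{n'}$ ($n'\ge4$), so the $F$-side always contributes a vertex while the $K_t$-side contributes $\{x\}$.

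I expect the crux to be the case $k=1$ with $J=K_t$, where one must exhibit a size-$\adim_1(F)$ adjacency basis of $F$ lying inside no single neighbourhood. When $\adim_1(F)\ge 3$ — that is, for $C_n$ with $n\ge 7$ and $P_{n'}$ with $n'\ge 7$ — this is free, and one may instead invoke Corollary~\ref{FromAdimK1+HtoAdimG+H} together with $\adim_1(K_1+C_n)=\adim_1(C_n)$ and $\adim_1(K_1+P_{n'})=\adim_1(P_{n'})$. The finitely many small orders must be checked individually against Propositions~\ref{value-dim1-fans-wheels} and \ref{value-dim1-paths-cycles}: for example two adjacent vertices of $C_5$, or the two end-vertices of $P_4$ and $P_5$, form adjacency bases with no common neighbour, so these cases go through. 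The order-six cases $C_6$ and $P_6$ are where the neighbourhood condition is most fragile, because every minimum adjacency basis of $C_6$ or $P_6$ lies inside a single neighbourhood; these are precisely the exceptional orders appearing in the fan and wheel formulas of Proposition~\ref{value-dim1-fans-wheels}, and they demand the closest scrutiny when completing the argument.
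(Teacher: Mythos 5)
Your overall route is the same as the paper's: the lower bound comes from Theorem \ref{propLowerBoundJoinG-H} (i), the matching upper bound from the equality criterion of Theorem \ref{propEqualJoinG-H}, and the numerical values from $\adim_1(K_t)=\adim_1(N_t)=t-1$, $\adim_2(K_t)=\adim_2(N_t)=t$ together with Propositions \ref{value-dim1-paths-cycles}, \ref{value-adj-Paths} and \ref{value-adj-Cycles}. The paper's own proof only verifies the criterion for $k=2$ (via $\adim_2(G_2)-\Delta(G_2)\ge 1$ and $|A-N_{G_1}(x)|\in\{1,t\}$) and leaves $k=1$ to a footnote; you are more careful, and the difficulty you flag at the very end is not merely a case that "demands scrutiny" --- it is fatal.

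For $k=1$, $J=K_t$ and $F\in\{C_6,P_6\}$ the condition of Theorem \ref{propEqualJoinG-H} genuinely fails: every adjacency basis of $K_t$ has the form $V(K_t)-\{z\}$, so $A_J-N_{K_t}(z)=\emptyset$, while an exhaustive check shows that every adjacency basis of $C_6$ is of the form $\{v_i,v_{i+2}\}\subseteq N_{C_6}(v_{i+1})$ and every adjacency basis of $P_6$ is $\{v_2,v_4\}$ or $\{v_3,v_5\}$, each contained in a single open neighbourhood. Since Theorem \ref{propEqualJoinG-H} is an equivalence, additivity cannot hold there; directly, any adjacency generator of $K_t+C_6$ of size $t+1$ must split as $(V(K_t)-\{z\})\cup\{v_i,v_{i+2}\}$, which fails to distinguish $z$ from $v_{i+1}$, so $\adim_1(K_t+C_6)=\adim_1(K_t+P_6)=t+2$, whereas the stated formula gives $t+1$. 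Thus the corollary itself is wrong at $n=6$ and $n'=6$ in the complete-graph case (the $N_t$ case is safe, since $A_J-N_{N_t}(x)=A_J\neq\emptyset$ always), and no argument can close this gap. The rest of your plan --- $k=2$ throughout, $k=1$ with $N_t$, and $k=1$ with $K_t$ for $C_5$, $P_4$, $P_5$ and all orders at least seven --- is sound and matches what the paper intends.
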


\begin{proof}
Let $G_1\in \{K_t,N_t\}$ and $G_2\in \{P_n,C_n\}$. By  Propositions  \ref{value-adj-Paths} and \ref{value-adj-Cycles} we deduce that  $\adim_2(G_2)-\Delta(G_2)\ge 1$. 
On the other hand, for any $2$-adjacency basis $A$ of $G_1$ and $x\in V(G_1)$ we have $|B-N_{G_1}(y)|\in \{1,t\}$. 
Therefore, by Theorem \ref{propEqualJoinG-H} we obtain the result for $G=G_1+G_2$. 
\end{proof}

\begin{corollary}
Let $G$ be a graph of order $n\ge 7$ and maximum degree $\Delta(G)\le 3$. Then for any integer $t\ge 2$ and $H\in \{K_t,N_t\}$,
$$\adim_2(G+H)=\adim_2(G)+t.$$
\end{corollary}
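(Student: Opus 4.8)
The plan is to invoke Theorem~\ref{propEqualJoinG-H} with $k=2$. First I would pin down $\adim_2(H)$: for $H\in\{K_t,N_t\}$ every vertex lies in a non-singleton twin equivalence class (true twins in $K_t$, false twins in $N_t$), so Corollary~\ref{remarkAdjDim2n} gives $\adim_2(H)=t$ and moreover forces the unique $2$-adjacency basis to be $A_H=V(H)$. Thus the asserted identity $\adim_2(G+H)=\adim_2(G)+t$ is exactly $\adim_2(G+H)=\adim_2(G)+\adim_2(H)$, and by Theorem~\ref{propEqualJoinG-H} it suffices to exhibit $2$-adjacency bases $A_G$ of $G$ and $A_H$ of $H$ such that $|(A_G-N_G(x))\cup(A_H-N_H(y))|\ge 2$ for every $x\in V(G)$ and $y\in V(H)$. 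One should first check that $k=2$ is admissible here: since $\mathcal{C}(H)=2$ (both $K_t$ and $N_t$ consist of twins) and $\mathcal{C}(G)\ge 2$ always, the formula $\mathcal{C}(G+H)=\min\{\mathcal{C}(G),\mathcal{C}(H),n-\Delta(G)+t-\Delta(H)\}$ yields $\mathcal{C}(G+H)=2$.

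Next I would take $A_H=V(H)$ and let $A_G$ be \emph{any} $2$-adjacency basis of $G$. Because $A_G\subseteq V(G)$ and $A_H\subseteq V(H)$ are disjoint, the quantity to bound splits as $|A_G-N_G(x)|+|A_H-N_H(y)|$. The case $H=N_t$ is immediate: there $N_H(y)=\emptyset$, so $|A_H-N_H(y)|=t\ge 2$ and the inequality holds for every $x$ with nothing to prove about $A_G$.

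The substantive case is $H=K_t$, where $N_H(y)=V(K_t)-\{y\}$ gives $|A_H-N_H(y)|=1$; hence I must show $|A_G-N_G(x)|\ge 1$ for every $x$, i.e.\ that no $2$-adjacency basis of $G$ is contained in an open neighbourhood. This is precisely where the hypotheses $n\ge 7$ and $\Delta(G)\le 3$ enter: Theorem~\ref{Bound-le-k+2} gives $|A_G|=\adim_2(G)\ge 4$, while $|N_G(x)|=\delta(x)\le\Delta(G)\le 3<4\le|A_G|$, so $A_G\not\subseteq N_G(x)$ and therefore $|A_G-N_G(x)|\ge 1$.

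In both cases condition~(i) of Theorem~\ref{propEqualJoinG-H} is verified, and the conclusion $\adim_2(G+H)=\adim_2(G)+t$ follows. The only genuine obstacle is the $K_t$ case, and it reduces to the strict numerical gap $\adim_2(G)\ge 4>\Delta(G)$; the assumptions $n\ge 7$ and $\Delta(G)\le 3$ are exactly what is needed to guarantee this gap, so I expect the verification to be short once the reduction to Theorem~\ref{propEqualJoinG-H} is set up.
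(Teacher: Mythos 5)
Your proposal is correct and follows essentially the same route as the paper: both apply Theorem \ref{propEqualJoinG-H} with $A_H=V(H)$ (so $|A_H-N_H(y)|$ is $t$ for $N_t$ and $1$ for $K_t$) and use Theorem \ref{Bound-le-k+2} to get $\adim_2(G)\ge 4>\Delta(G)$, hence $|A_G-N_G(x)|\ge 1$. Your write-up just makes explicit a few details the paper leaves implicit, such as the admissibility of $k=2$ and why $A_G\not\subseteq N_G(x)$.
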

\begin{proof}
By Theorem \ref{Bound-le-k+2} we deduce that $\adim_2(G)\ge 4$, so 
for any $2$-adjacency basis $A$ of $G$ and $x\in V(G)$ we have $|A-N_G(x)|\ge 1$. Moreover, for any $2$-adjacency basis $B$ of $H$ and $y\in V(H)$ we have $|B-N_{H}(y)|\in \{1,t\}$. 
Therefore, by Theorem \ref{propEqualJoinG-H} we obtain the result. 
\end{proof}

\begin{corollary}\label{CorollarySumadimGradoPeque}
Let $G$ and $H$ be two graphs of order at least seven such that $G$ is $k_1$-adjacency dimensional and $H$ is $k_2$-adjacency dimensional. For any integer $k$ such that $\Delta(G)+\Delta(H)-4\le k\le \min\{k_1,k_2\}$,
$$\adim_k(G+H)=\adim_k(G)+\adim_k(H).$$
\end{corollary}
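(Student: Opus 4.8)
The plan is to reduce everything to the characterization provided by Theorem \ref{propEqualJoinG-H}: to establish the claimed equality it suffices to exhibit a $k$-adjacency basis $A_G$ of $G$ and a $k$-adjacency basis $A_H$ of $H$ satisfying $|(A_G-N_G(x))\cup(A_H-N_H(y))|\ge k$ for every $x\in V(G)$ and $y\in V(H)$. Since $k\le\min\{k_1,k_2\}$ and, by Theorem \ref{theokadjacency}, $k_1=\mathcal{C}(G)$ and $k_2=\mathcal{C}(H)$, such bases exist. In fact I would take \emph{arbitrary} $k$-adjacency bases $A_G$ and $A_H$, because the degree hypothesis will turn out to force the required inequality for every choice, so no clever selection is needed.

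The core estimate is an elementary counting bound. For any $x\in V(G)$ we have $|A_G\cap N_G(x)|\le\delta(x)\le\Delta(G)$, hence $|A_G-N_G(x)|\ge|A_G|-\Delta(G)=\adim_k(G)-\Delta(G)$. Because $G$ has order at least seven and $k\in\{1,\dots,\mathcal{C}(G)\}$, Theorem \ref{Bound-le-k+2} gives $\adim_k(G)\ge k+2$, so $|A_G-N_G(x)|\ge k+2-\Delta(G)$; symmetrically $|A_H-N_H(y)|\ge k+2-\Delta(H)$ for every $y\in V(H)$.

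The crucial structural point is that $A_G-N_G(x)\subseteq V(G)$ and $A_H-N_H(y)\subseteq V(H)$ lie in disjoint vertex sets, so the cardinality of their union is simply the sum of the two cardinalities. Adding the two bounds yields
\[
|(A_G-N_G(x))\cup(A_H-N_H(y))|\ge 2k+4-\Delta(G)-\Delta(H).
\]
Requiring this quantity to be at least $k$ is exactly the condition $k\ge\Delta(G)+\Delta(H)-4$, which is precisely the hypothesis. Hence condition (i) of Theorem \ref{propEqualJoinG-H} holds for the pair $A_G,A_H$, and the equality $\adim_k(G+H)=\adim_k(G)+\adim_k(H)$ follows at once.

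I expect no serious obstacle: the argument is a direct assembly of Theorem \ref{propEqualJoinG-H}, the lower bound of Theorem \ref{Bound-le-k+2}, and the disjointness of $V(G)$ and $V(H)$. The only points requiring care are to confirm that the hypotheses of Theorem \ref{Bound-le-k+2} are met — order at least seven and $k$ within the admissible range $\{1,\dots,\mathcal{C}(G)\}$, which holds since $k\le k_1=\mathcal{C}(G)$, and analogously for $H$ — and to note that the threshold $\Delta(G)+\Delta(H)-4$ is matched \emph{exactly}, so the inequality is tight and the stated lower bound on $k$ cannot be relaxed by this method.
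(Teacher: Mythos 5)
Your proposal is correct and follows essentially the same route as the paper: invoke Theorem \ref{Bound-le-k+2} to get $\adim_k(G)\ge k+2$ and $\adim_k(H)\ge k+2$, observe that $(\adim_k(G)-\Delta(G))+(\adim_k(H)-\Delta(H))\ge 2k+4-\Delta(G)-\Delta(H)\ge k$ under the degree hypothesis, and conclude via Theorem \ref{propEqualJoinG-H}. Your write-up merely makes explicit the counting step $|A_G-N_G(x)|\ge |A_G|-\Delta(G)$ and the disjointness of $V(G)$ and $V(H)$, which the paper leaves implicit.
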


\begin{proof}
By Theorem \ref{Bound-le-k+2}, for any positive integer $k\le \min\{k_1,k_2\}$, we have $\adim_k(G)\ge k+2$ and $\adim_k(H)\ge k+2$.
Thus, if $k \ge \Delta(G)+\Delta(H)-4$, then
$(\adim_k(G)-\Delta(G))+(\adim_k(H)-\Delta(H))\ge k$. Therefore, by Theorem \ref{propEqualJoinG-H} we conclude the proof.
\end{proof}

As a particular case of the result above we derive the following remark.

\begin{remark}
Let $G$ and $H$ be two $3$-regular graphs of order at least seven. Then $$\adim_2(G+H)=\adim_2(G)+\adim_2(H).$$
\end{remark}

\end{document}